\documentclass[12pt, oneside]{scrartcl}
\usepackage{graphicx} 
\usepackage{amssymb}
\usepackage{amsmath}
\usepackage{hyperref}
\usepackage{amsthm}
\usepackage{cleveref}
\usepackage{tikz}
\newtheorem{theorem}{Theorem}[section]
\newtheorem{lemma}{Lemma}[section]

\newtheorem{claim}{Claim}
\newtheorem{Claim}{Claim}

\newtheorem{remark}{Remark}

\newtheorem{conjecture}{Conjecture}
\usepackage{float}
\usepackage{subcaption}

\usepackage{setspace}
\title{2-Distance Coloring of 4-Irregular Planar Graphs}
\author{Sara Al Hajjar}
\date{}

\author{Sara Al Hajjar \footnote{
Kalma Laboratory, Lebanese University, Beirut, Lebanon \\
Univ. Bordeaux, CNRS, Bordeaux INP, LaBRI, UMR 5800, F-33400, Talence,
France. Email: sara.bhajjar@gmail.com
}}
\date{}
\begin{document}

\maketitle
\begin{abstract}
 A $k$-irregular graph is a graph with maximum degree $k$ such that vertices of degree $k$ are not adjacent. A $2$-distance $k$-coloring of a graph is a coloring of the vertices using $k$ colors in which any two vertices at distance at most $2$ receive distinct colors. The $2$-distance chromatic number of $G$, denoted by $\chi_{2}(G)$, is the minimum integer $k$ such that $G$ admits a $2$-distance $k$-coloring. Zhu \cite{zhu} proved that $\chi_2(G)\leq 13$ for planar graphs with maximum degree at most $4$.
 We prove that for a $4$-irregular planar graph $G$, we have $\chi_2(G) \leq 10$. 
\end{abstract}
\textbf{Keywords:} 2-distance coloring, irregular, planar.

\section{Introduction}
The discussion in this paper is restricted to finite simple graphs, with the usual notation conventions followed. In a graph $G$, the set of neighbors of a vertex $v$ is denoted by $N_G(v)$. The degree of a vertex $v$, written as $d_G(v)$, represents the number of its neighbors. 
For simplicity, we write $N(v)$ (resp., $d(v)$) instead of $N_G(v)$ (resp., $d_G(v)$). 
We say a graph $G$ is a $k$-irregular graph if $\Delta(G)=k$ and the vertices of degree $k$ in $G$ are not adjacent. A $k$-vertex is a vertex of degree $k$. Moreover, a $k^+$-vertex (resp., $k^-$-vertex) is a vertex of degree at least $k$ (resp., at most $k$). We say $u$ is a $k$-neighbor of a vertex $v$ if $u$ is a $k$-vertex adjacent to $v$. The girth of a graph $G$ is the length of its shortest cycle and is denoted by $g(G)$. A $k$-cycle is a cycle of length $k$. We say two cycles are \textit{adjacent} if they share exactly one common edge. The \textit{square} of a graph $G$, denoted by $G^2$, is the graph obtained from $G$ after adding an edge between every two vertices of distance exactly $2$.
\\

A \textit{planar} graph is a graph that can be drawn with no edge crossings. We call such a drawing a plane graph or a planar embedding of the graph. When a planar graph is drawn with no edge crossings, it divides the
plane into a set of regions, called faces. We denote by $F(G)$ the set of
faces of a planar graph $G$. Each face $f$ is bounded by a closed walk
called the boundary of the face. The degree of a face is the length of its boundary and is denoted by $d(f)$. A face $f$ in a planar graph $G$  is said to be incident with each vertex and edge on its boundary, and two faces are said to be adjacent if their boundaries have an edge in common. A face in a planar graph $G$ is said to be a $k$-face if it is of degree $k$. Moreover, a face in a planar graph $G$ is said to be a $k^+$-face (resp., $k^-$-face) if it is of degree at least $k$ (resp., at most $k$). \\

A $2$-distance $k$-coloring of a graph $G$ is a mapping $\phi : V(G) \to \{1,2,\ldots,k\}$ such that $\phi(v_1) \neq \phi(v_2)$ whenever $d(v_1,v_2) \leq 2$, where $v_1$ and $v_2$ are any two vertices in $G$.  
The \textit{$2$-distance chromatic number} of a graph $G$, denoted by $\chi_2(G)$, is the minimum integer $k$ such that $G$ admits a $2$-distance $k$-coloring. Note that $\chi_2(G)=\chi(G^2)$.
\\ 

Several papers have studied Wegner's conjecture \cite{11} regarding the  $2$-distance chromatic number of planar graphs. Wegner conjectured the following:
\begin{conjecture}(Wegner \cite{11}) If $G$  is a planar graph with maximum degree $\Delta$, then 
$\chi_2(G) \leq 7$ if $\Delta=3$, $\chi_{2} (G)  \leq\Delta + 5$ if $4 \leq \Delta \leq 7$ and $\chi_{2}(G)\leq \lfloor\frac{3\Delta}{2}\rfloor+1$ if $\Delta \ge 8$. \end{conjecture} 
This conjecture remains largely open. Thomassen~\cite{9} proved it for planar graphs with maximum degree $\Delta=3$. Zhu \cite{zhu} proved that $\chi_2(G)\leq 13$ for planar graphs with maximum degree at most $4$.
For larger values of $\Delta$, several upper bounds on $\chi_2(G)$ have been established. In particular, Agnarsson and Halldórsson~\cite{1} proved that
$\chi_2(G)\le \left\lfloor\frac{9\Delta}{5}\right\rfloor+2$
for planar graphs with $\Delta\ge749$, and this was later improved by Borodin \emph{et al.}~\cite{2} to
$\chi_2(G)\le \left\lceil\frac{9\Delta}{5}\right\rceil+1 $
for $\Delta\ge47$. Bousquet~\cite{3} proved that $\chi_2(G)\le2\Delta+7$ for $\Delta\ge9$ and $\chi_2(G)\le21$ for $\Delta\le6$. More recently, Al Hajjar \cite{sh} proved that $\chi_2(G)\leq 3\Delta +2$ for planar graphs with maximum degree $\Delta \ge 6$, hence improving the bound for $6\leq \Delta \leq 8$. For planar graphs with $\Delta\le5$, Zhu and Bu~\cite{12} proved that $\chi_2(G)\le20$, and this bound was subsequently reduced to $19$, $18$, $17$, and finally $16$ by Chen~\cite{4}, Aoki~\cite{6}, Zou \emph{et al.}~\cite{13}, and Zakir Deniz~\cite{5}, respectively. Further results on the $2$-distance chromatic number of planar graphs can be found in \cite{sh1,7,8,10,12}.
\\

We will prove that if $G$ is a $4$-irregular planar graph, then $\chi_2(G) \leq 10$. 
\\

Throughout the paper, we partially color $G^2$ using 10 colors. To extend the 2-distance 10-coloring to the whole graph $G^2$, we repeatedly use the notion of \emph{saving a color} at a vertex $v$. By this, we mean coloring two neighbors of $v$ in $G^2$ while reducing the number of available colors at $v$ by only one. A typical situation arises when $v$ is adjacent in $G^2$ to two nonadjacent vertices $v_1$ and $v_2$, and $|L(v_1)|+|L(v_2)|>|L(v)|$. This inequality guarantees either that $L(v_1)\cap L(v_2)\neq\emptyset$ or that $(L(v_1)\cup L(v_2))\setminus L(v)\neq\emptyset$. In the former case, we assign a common color to $v_1$ and $v_2$. In the latter, we color one of $v_1$ or $v_2$ with a color belonging to $(L(v_1)\cup L(v_2))\setminus L(v)$ and color the other arbitrarily. In either case, only one color is removed from the list of available colors of $v$.

\section{Main Result}
\begin{theorem} \label{thm1} 
If $G$ is a $4$-irregular planar graph, then $\chi_2(G) \leq 10$. \end{theorem}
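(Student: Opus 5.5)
We argue by minimal counterexample. Let $G$ be a $4$-irregular planar graph (more precisely, a planar graph with $\Delta\le 4$ in which no two $4$-vertices are adjacent; this class is closed under deleting vertices and edges) with $\chi_2(G)>10$ and $|V(G)|+|E(G)|$ minimum. Then every proper subgraph $H$ of this class admits a $2$-distance $10$-coloring. The overall structure is a set of reducible configurations followed by a discharging argument on a fixed plane embedding.

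\textbf{Reducible configurations.} For each configuration we delete a small set $S$ (a vertex, or an edge with some low-degree vertices), and if needed add edges among the neighbors so that distances in $G-S$ between remaining vertices do not grow beyond those in $G$. Adding an edge is legitimate only if it creates no adjacent pair of $4$-vertices and keeps planarity, for instance by contracting along a path through $S$. We color $G-S$ by minimality, uncolor vertices of $S$, and extend greedily using lists $L(v)$ of colors not used on colored $G^2$-neighbors. A $2$-vertex adjacent to $3^-$-vertices has at most $6$ neighbors in $G^2$, and a $3$-vertex has at most $3+3\cdot 3=12$ or fewer. The key counting fact is that, since $4$-vertices are pairwise nonadjacent, a $3$-vertex $v$ with $k$ neighbors of degree $4$ has $|N_{G^2}(v)|\le 3+\sum (d(u)-1)$. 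This is small unless many $4$-neighbors are present. We plan to show:
\begin{itemize}
\item[(C1)] $\delta(G)\ge 2$.
\item[(C2)] A $2$-vertex has no $3^-$-neighbor, so both of its neighbors are $4$-vertices; then $|N_{G^2}(v)|\le 8<10$.
\item[(C3)] Every $3$-vertex with at most one $4$-neighbor is reducible, and restrictions hold on $3$-vertices with two $4$-neighbors, e.g.\ they are not adjacent to another such $3$-vertex or to a $2$-vertex path.
\item[(C4)] Configurations with triangles and $4$-faces are reducible: a $3$-vertex on a $3$-face has its $G^2$-neighborhood shrunk by repeats, so it has fewer than $10$ colored neighbors.
\end{itemize}
To establish these, we will repeatedly use the ``saving a color'' technique described before the theorem to handle pairs of uncolored vertices whose lists are tight.

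\textbf{Discharging.} We assign charge $\mu(v)=2d(v)-6$ to vertices and $\mu(f)=d(f)-6$ to faces, so the total is $-12$ by Euler's formula. Then $4$-vertices have charge $2$, $3$-vertices $0$, and $2$-vertices $-2$; $3$-faces have $-3$, $4$-faces $-2$, and $5$-faces $-1$. Each $4$-vertex sends charge to incident small faces and to its $2$-neighbors. Because $4$-vertices are never adjacent, every face of size $d$ has at most $\lfloor d/2\rfloor$ incident $4$-vertices. For small faces this supplies too little, so the choice of $\mu$ may need to change to the vertex-weighted $\mu(v)=d(v)-4$, $\mu(f)=d(f)-4$ (total $-8$). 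With this choice only $2$-vertices, $3$-vertices and $3$-faces are negative. The $4$-vertices carry no charge, so $3$-vertices must be fed by $5^+$-faces. Alternatively, we use the mixed scheme $\mu(v)=3d(v)-10$, $\mu(f)=2d(f)-10$ (total $-20$), which makes $4$-vertices carry $2$, $3$-vertices $-1$, $4$-faces $-2$ and $3$-faces $-4$. We would pick whichever makes the rules close with the configurations above.

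\textbf{Main obstacle.} The hardest part is the $3$-vertices adjacent to three $4$-vertices. Such a vertex can have $3+9=12$ $G^2$-neighbors, so it is not reducible alone. In the planar embedding it sits in regions where each face alternates between $3$-vertices and $4$-vertices. Proving enough reducible configurations around such vertices, for instance that two such $3$-vertices cannot share two $4$-neighbors on a $4$-face, will require careful recoloring arguments with list sizes near $10$. Our first attempt is to delete an edge between a $3$-vertex and a $4$-vertex, which turns the $4$-vertex into a $3$-vertex and preserves irregularity. We then recolor the endpoints using color saving on their common $G^2$-neighbors. This is where we expect most case analysis.
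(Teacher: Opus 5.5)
Your proposal is a plan rather than a proof, and what is missing is essentially the whole argument. You never commit to an initial charge (you list three and say you would use whichever closes), you state no discharging rules, and you check no final charge. In the paper this is where all the work lies. It uses charge $d(v)-4$ on vertices and $d(f)-4$ on faces, so $4$-vertices and $4$-faces are neutral and every $3^-$-vertex must be paid by its incident $5^+$-faces. It then needs rules R1--R10 and Lemmas~\ref{l6}--\ref{l28} to make the count close.

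Several reductions you intend to rely on are also not available.
\begin{itemize}
\item Your (C3) fails for a $5$-face whose five vertices are $3$-vertices, each with one $4$-neighbor off the face. Each such vertex can have $10$ neighbors in $G^2$. After uncoloring the face you are only guaranteed lists of size $4$ on what is a $K_5$ in $G^2$, and that need not be colorable. The paper keeps these bad (and semi-bad) $5$-faces and pays for them with R4, R5, R8, R9, after Lemmas~\ref{l16}--\ref{l20}.
\item Your (C4) is wrong for $4$-faces. Lemma~\ref{l8} only forces a $4$-cycle to alternate $3,4,3,4$, and a $3$-vertex on it can have $11$ neighbors in $G^2$. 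So these faces survive the reductions and have to be absorbed by the charge.
\item The bound $|N_{G^2}(v)|\le 8$ does not make a $2$-vertex reducible. Deleting $v$ destroys the distance-$2$ constraint between its two $4$-neighbors. You cannot restore it by adding an edge or contracting without making two $4$-vertices adjacent. The paper can only remove a $2$-vertex lying on a $4$-cycle (Lemma~\ref{l6}).
\end{itemize}

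Your identification of the main obstacle also points the wrong way. With charge $d(v)-4$, a $3$-vertex whose neighbors all have degree $4$ and whose faces are all $5^+$-faces is the easy case: it takes $\frac13$ from each face via R3 and R10. The real difficulties are these:
\begin{itemize}
\item $3$-vertices on a $4$-face get nothing from that face, so they need $\frac12$ or $\frac23$ from their other faces. This is what Lemmas~\ref{l9}, \ref{l12}, \ref{l19}, \ref{l21}--\ref{l24} and \ref{l26} establish: such a vertex lies on at most one $4$-face and at most one $5$-face, never on a bad or semi-bad $5$-face, weak ones are nonadjacent, and nearby $6$-faces are not too crowded.
\item $3$-vertices on $5$-faces crowded with $3$-vertices.
\item $2$-vertices need charge $2$ from just two faces. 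The paper handles them by showing that $2$-vertices are at distance at least $4$ from each other. It also shows that a $2$-vertex on a $5$-face has its other face of size at least $8$ (Lemmas~\ref{l13}--\ref{l15}), and that face pays $\frac53$.
\end{itemize}
None of these configurations appears in your plan, so as it stands there is no argument that a minimal counterexample cannot exist.
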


Assume, for a contradiction, that \Cref{thm1} fails and let $G$ be a counterexample minimizing $|E(G)|$. By minimality of $|E(G)|$, we deduce that $\delta (G)\ge 2$. We will show that $G$ does not exist and hence \Cref{thm1} holds.

\subsection{A Structural Analysis of G}
  The list of available colors of each vertex $v$ is denoted by $L(v)$. 

\begin{remark} \label{r1}
    We always have $\chi_2(H) \leq 10$ for $H=G \backslash\{e\}$ for any edge $e\in E(G)$ by minimality of $|E(G)|$. Thus, to extend the $2$-distance $10$-coloring to the whole graph $G$, we only need to recolor the vertices of the edge $e$. Thus, we will always assume that by minimality of $|E(G)|$, we can greedily color all the vertices in $G^2$ using 10 colors  except two or more adjacent vertices.

\end{remark}
 
 \begin{remark} Consider a partial $2$-distance 10-coloring of $G$. Let $v$ be an uncolored vertex in $G$. Then, $|L(v)|\ge 10-k$ where $k$ is the number of colored neighbors of $v$ in $G^2$. 
 \end{remark}
\begin{lemma} \label{l1}
    $G$ has no cut vertex.
\end{lemma}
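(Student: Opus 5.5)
We argue by contradiction: suppose $v$ is a cut vertex of $G$. Then $G$ splits as $G=G_1\cup G_2$ with $V(G_1)\cap V(G_2)=\{v\}$, where each $G_i$ contains $v$ and at least one edge incident to $v$. Each $G_i$ is planar with maximum degree at most $4$, and its $4$-vertices are pairwise non-adjacent, since adjacency and degrees inside $G_i$ only shrink. So each $G_i$ is $4$-irregular, or else has maximum degree at most $3$. In the latter case we still want $\chi_2(G_i)\le 10$. The cleanest way to get this is to prove the theorem for planar graphs of maximum degree at most $4$ whose $4$-vertices are pairwise non-adjacent; minimality of $|E(G)|$ applies verbatim in that class. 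Since $|E(G_i)|<|E(G)|$, each $G_i$ admits a $2$-distance $10$-coloring $\phi_i$.

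The main step is to combine $\phi_1$ and $\phi_2$. In $G^2$, a vertex of $G_1-v$ and a vertex of $G_2-v$ are adjacent only if both lie in $N(v)$, that is, only through paths of length $2$ via $v$. So the only conflicts are:
\begin{itemize}
\item between $v$ and vertices of $G_2$ at distance at most $2$ from $v$, which also exists in $G_2$ alone;
\item between $N_{G_1}(v)$ and $N_{G_2}(v)$.
\end{itemize}
Let $d_i=d_{G_i}(v)$, so $d_1+d_2=d(v)\le 4$ and $d_i\ge 1$.

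We permute the colors of $\phi_2$ so that:
\begin{itemize}
\item $\phi_2(v)=\phi_1(v)$;
\item the colors of $N_{G_2}(v)$ avoid the colors of $N_{G_1}(v)$.
\end{itemize}
In $\phi_1$, the vertex $v$ and its $d_1$ neighbors use $1+d_1$ distinct colors. In $\phi_2$, the vertex $v$ and its $d_2$ neighbors use $1+d_2$ distinct colors. We need an injection from the $d_2$ colors of $N_{G_2}(v)$ into the $10-1-d_1$ colors unused around $v$ in $\phi_1$. This requires $10-1-d_1\ge d_2$, which holds since $d_1+d_2\le 4$. Then we extend this to a permutation of $\{1,\dots,10\}$ that fixes $\phi_2(v)\mapsto\phi_1(v)$.

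The permuted $\phi_2$ is still a valid coloring of $G_2^2$, it agrees with $\phi_1$ at $v$, and no vertex of $N_{G_1}(v)$ shares a color with a vertex of $N_{G_2}(v)$. Hence the union is a $2$-distance $10$-coloring of $G$, a contradiction.

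The only subtle point is the hereditary issue above: a piece $G_i$ may fail to have $\Delta=4$. We handle this by stating the minimal counterexample over the hereditary class, maximum degree at most $4$ with no two adjacent $4$-vertices, which is what the minimality argument really uses. The counting step itself is easy because $d(v)\le 4$ leaves plenty of spare colors.
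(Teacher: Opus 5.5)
Your proof is correct and follows the same route as the paper. You split $G$ at the cut vertex into $G_1$ and $G_2$, color each by minimality, and permute one side's colors so that $v$ agrees and the two neighborhoods of $v$ get disjoint color sets; your count $10-1-d_1\ge d_2$ makes explicit what the paper only asserts. You also rightly flag that a piece $G_i$ may have $\Delta(G_i)\le 3$, so it is not $4$-irregular in the paper's sense, a point the paper glosses over. Your hereditary-class fix handles this, and a simpler patch also works: in that case $\Delta(G_i^2)\le 9$, so $G_i^2$ is greedily $10$-colorable.
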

\begin{proof}
 Suppose, to the contrary, that $G$ has a cut vertex $v$. Let $C_1, ..., C_t$ be the connected components of $G-v$, $t\ge 2$. Let $G_1 = C_1 \cup \{v\}$ and $G_2 = C_2\cup ... \cup C_t \cup \{v\}$. Since $G_i$ is $4$-irregular and planar, by definition of minimal counterexample, we have $\chi_2(G_i) \leq 10$ for $i=1,2$. By  coloring $v$ by the same color in $G_1$ and $G_2$ and all the neighbors of $v$ by pairwise distinct colors, then combining the colorings of $G_1$ and $G_2$, we obtain  $\chi_2(G) \leq 10$, a contradiction.    
\end{proof}
We deduce that each $k$-vertex is incident to $k$ faces.
\begin{lemma} \label{l2} A $2$-vertex is not adjacent to any $3^-$-vertex. \end{lemma}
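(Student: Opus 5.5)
Suppose, to the contrary, that a $2$-vertex $u$ with neighbors $v$ and $w$ is adjacent to a $3^-$-vertex, say $d(v)\le 3$. By \Cref{r1}, applied to the edge $e=uv$, the graph $H=G\setminus\{uv\}$ admits a $2$-distance $10$-coloring. $H$ is still planar and has maximum degree at most $4$. Its $4$-vertices remain pairwise nonadjacent, since removing edges cannot create adjacencies. If $\Delta(H)<4$ the required bound follows anyway, because graphs with $\Delta\le 3$ satisfy $\chi_2\le 7\le 10$ (Thomassen), or, more simply, because the minimality argument applies to every proper subgraph. We keep this coloring on all vertices other than $u$ and $v$, erase the colors of $u$ and $v$, and recolor these two vertices greedily, in a suitable order.

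The key step is to count the colored neighbors of $u$ and $v$ in $G^2$. The $G^2$-neighbors of $u$, other than $v$, are $w$, the neighbors of $w$ other than $u$, and the neighbors of $v$ other than $u$. Since $d(w)\le 4$ and $d(v)\le 3$, this gives at most $1+3+2=6$ colored vertices. Hence $|L(u)|\ge 10-6=4$.

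The $G^2$-neighbors of $v$, other than $u$, are $w$, its at most $2$ other neighbors, and the neighbors of those at most $2$ vertices. Each of those vertices has at most $3$ further neighbors when its degree is $4$. This gives at most $1+2+2\cdot 3=9$ colored vertices. Hence $|L(v)|\ge 1$.

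We therefore color $v$ first and $u$ second. After $v$ is colored, $u$ still has at least $4-1=3\ge 1$ available colors. This yields a $2$-distance $10$-coloring of $G$, contradicting the choice of $G$.

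The main obstacle is making sure the count for $v$ does not exceed $9$ colored $G^2$-neighbors. The case $d(v)=3$ gives exactly $1+2+6=9$ in the worst case. This is tight but sufficient, and it uses that every vertex has degree at most $4$. If instead $d(v)=2$, the count drops further, to at most $1+1+3=5$. Some neighbors may coincide, but that only reduces the counts. No saving-a-color trick is needed here; a careful degree count is enough.
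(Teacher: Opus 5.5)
Your proof is correct and is essentially the paper's argument: remove the edge $uv$, keep the coloring on all other vertices, and use $|L(v)|\ge 1$ and $|L(u)|\ge 4$ to color the $3^-$-vertex first and the $2$-vertex second. Your side remark about the case $\Delta(H)<4$ addresses a point the paper glosses over, but Thomassen's theorem is not needed there, since $\Delta\le 3$ already gives $\chi_2\le \Delta^2+1=10$ by greedy coloring.
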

\begin{proof} Suppose that there exists a $2$-vertex $v$ adjacent to $3^-$-vertex $u$. By \Cref{r1}, we can greedily color all the vertices in $G^2$ using $10$ colors except $u$ and $v$. Since $|L(u)|\ge 1$ and $|L(v)| \ge 4$, we can greedily color $u$ then $v$ in $G^2$ to obtain $\chi_2(G) \leq 10$, a contradiction.
\end{proof}

\begin{lemma} \label{l3} The distance between the $2$-vertices is at least 3.  \end{lemma}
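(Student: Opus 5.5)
The proof is by contradiction: suppose two $2$-vertices $u$ and $v$ are at distance at most $2$. Distance $1$ is already excluded by \Cref{l2}, since a $2$-vertex cannot be adjacent to a $3^-$-vertex. So we may assume $d(u,v)=2$. Let $w$ be a common neighbor of $u$ and $v$, let $x$ be the other neighbor of $u$, and let $y$ be the other neighbor of $v$. The case $x=y$ is allowed and only makes the counts below smaller.

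We use \Cref{r1} with the edge $e=uw$. The graph $H=G\setminus\{uw\}$ is planar. If $\Delta(H)=4$, it is still $4$-irregular: removing an edge cannot create adjacent $4$-vertices. So $\chi_2(H)\le 10$ by minimality. If instead $\Delta(H)\le 3$, then $\chi_2(H)\le 7\le 10$ by Thomassen's theorem~\cite{9}.

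Take a $2$-distance $10$-coloring of $H$. Every pair of vertices at distance at most $2$ in $G$ is also at distance at most $2$ in $H$, except pairs involving $u$ together with $w$ or $N(w)\setminus\{u\}$, and pairs $\{w,x\}$. To handle all of these at once, we uncolor both $u$ and $v$. The remaining colored vertices then form a valid partial coloring of $G^2$. There is one check here: $w$ and $x$ become at distance $2$ in $G$ through $u$. In $H$ they may share a color, so we also uncolor $w$ if needed. Simpler: uncolor $u$, $v$ and $w$, since $w$ is adjacent to $u$ and so is covered by the remark's freedom to recolor endpoints of $e$.

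Next we count lists in $G^2$. The vertex $u$ has at most $2+3+3=8$ neighbors in $G^2$: namely $w$, $x$, $N(w)\setminus\{u\}$ and $N(x)\setminus\{u\}$. Among them $v$ and $w$ are uncolored, so $|L(u)|\ge 4$. Symmetrically $|L(v)|\ge 4$.

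For $w$, we have $d(w)\le 4$ and its neighbors $u$ and $v$ are $2$-vertices. So the $G^2$-neighbors of $w$ number at most $d(w)$ plus one extra neighbor from each of $u$ and $v$, plus at most $3$ from each remaining neighbor of $w$. That gives at most $4+2+2\cdot 3=12$. Removing the uncolored $u$ and $v$ leaves at most $10$ colored, which is too weak.

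So the better plan is to avoid uncoloring $w$, and instead delete the edge $uw$ but keep the coloring of $w$. The only new conflict at $w$ is with $x$. Here we use \Cref{r1} as stated: the endpoints $u$ and $w$ of $e$ are recolored. Equivalently, we first try uncoloring only $u$ and $v$, and argue that $w$ and $x$ have distinct colors. We can force this by instead taking $e=ux$ when that works, or by choosing which edge to delete so that the conflict vertex is a neighbor of the re-colored vertex.

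The cleanest route I would actually pursue is to uncolor $u$, $v$ and $w$, and then color them in the order $w$, $v$, $u$. The lists are $|L(u)|\ge 4$ and $|L(v)|\ge 4$. For $w$, the bound above gives $|L(w)|\ge 0$, so we must sharpen it using \Cref{l2}: the neighbors of $w$ other than $u$ and $v$ are not $3^-$-vertices only if they are $4$-vertices, and if $w$ is a $4$-vertex then its neighbors are $3^-$-vertices.

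The main obstacle is getting $|L(w)|\ge 1$. To get it, I would split into cases on $d(w)$:
\begin{itemize}
\item If $d(w)\le 3$, then $w$ has at most $2+2+3=7$ colored $G^2$-neighbors, so $|L(w)|\ge 3$.
\item If $d(w)=4$, its other two neighbors are $3^-$-vertices, so $w$ has at most $4+1+1+2+2=10$ neighbors in $G^2$, of which at most $8$ are colored, so $|L(w)|\ge 2$.
\end{itemize}
In either case we color $w$ first, then $v$ and $u$, each of which still has a free color because each loses at most one more color along the way. This gives $\chi_2(G)\le 10$, a contradiction.
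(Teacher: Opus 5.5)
Your final route is correct and is essentially the paper's argument. You delete an edge between a $2$-vertex and the common neighbor $w$, uncolor its endpoints, and note that $w$ is a $4$-vertex whose other two neighbors are $3^-$-vertices, so $w$ has at most $10$ neighbors in $G^2$ and keeps a nonempty list when colored first. The case $d(w)\le 3$ is in fact vacuous by Lemma~\ref{l2}. The paper uncolors only $x$ and $x_1$, getting $|L(x_1)|\ge 1$ and $|L(x)|\ge 3$; your extra uncoloring of $v$ is harmless.

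Your remark that $G-e$ might have maximum degree $3$, handled via Thomassen's theorem, is a legitimate point of rigor that Remark~\ref{r1} glosses over. You should cut the abandoned detours in the middle of the write-up: uncoloring only $u,v$, switching to $e=ux$, and the ``$|L(w)|\ge 0$'' bound.
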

\begin{proof} 
Suppose, to the contrary, that there exist $2$-vertices $x$ and $y$ such that $d(x,y)\leq 2$. By Lemma \ref{l2}, we deduce that $d(x,y)=2$. 
Let $x_1\in N(x)\cap N(y)$. Greedily color all the vertices in $G^2$ using $10$ colors except $x$ and $x_1$. Then, $|L(x)|\ge 3$ and $|L(x_1)|\ge 1$. Hence, we can greedily color $x_1$ then $x$ in $G^2$ to obtain $\chi_2(G) \leq 10$, a contradiction.
\end{proof}
\begin{lemma} \label{l4} The distance between the $2$-vertices is at least $4$.  \end{lemma}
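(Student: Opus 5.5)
The plan is to derive this directly from Lemma~\ref{l3}, Lemma~\ref{l2}, and the $4$-irregularity of $G$, with no new recoloring argument. Suppose, to the contrary, that there are two $2$-vertices $x$ and $y$ with $d(x,y)\le 3$. By Lemma~\ref{l3} we already know $d(x,y)\ge 3$. Hence $d(x,y)=3$, and a shortest path between them has the form $x\,x_1\,y_1\,y$, where $x_1\in N(x)$, $y_1\in N(y)$, $x_1y_1\in E(G)$, and the four vertices are distinct.

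Next I identify the degrees of $x_1$ and $y_1$. By Lemma~\ref{l2}, a $2$-vertex has no $3^-$-neighbor, so $d(x_1)\ge 4$ and $d(y_1)\ge 4$. Since $\Delta(G)=4$, both $x_1$ and $y_1$ are $4$-vertices. But $x_1y_1$ is an edge, so two $4$-vertices are adjacent. This contradicts the assumption that $G$ is $4$-irregular, so no such pair $x,y$ exists.

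I do not expect any real obstacle. The only point to check is that a shortest $x$--$y$ path of length $3$ really has the form $x\,x_1\,y_1\,y$ with $x_1\ne y_1$ adjacent. This holds because shortest paths have no repeated vertices.
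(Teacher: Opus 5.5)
Your proof is correct and is the same as the paper's. Lemma~\ref{l3} forces $d(x,y)=3$, and Lemma~\ref{l2} makes both interior vertices of a shortest $x$--$y$ path into adjacent $4$-vertices, which contradicts $4$-irregularity.
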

\begin{proof}
  Suppose, to the contrary, that there exist $2$-vertices $x$ and $y$ such that $d(x,y)\leq 3$. By Lemma \ref{l3}, we deduce that $d(x,y)=3$.  Let $P=xx'y'y$ be an $xy$-path of length $3$. Then, by Lemma \ref{l2}, we deduce that $d(x')=d(y')=4$, a contradiction since $G$ is $4$-irregular. 
\end{proof}
\begin{lemma} \label{l5} $g(G) \ge 4$. 
\end{lemma}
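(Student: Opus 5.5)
We argue by contradiction, using \Cref{r1} together with the degree constraints coming from $4$-irregularity and \Cref{l2}. Suppose $G$ contains a triangle $xyz$. The plan is first to pin down the degrees on the triangle, and then to show that two adjacent $3$-vertices on it can be uncolored and recolored.

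First, the degrees. Since $\delta(G)\ge 2$ and $\Delta(G)=4$, every vertex of the triangle has degree $2$, $3$ or $4$. By $4$-irregularity, the triangle contains at most one $4$-vertex, because any two of its vertices are adjacent. If some vertex, say $x$, were a $2$-vertex, then by \Cref{l2} both $y$ and $z$ would be $4$-vertices, and they are adjacent; this is a contradiction. Hence every vertex of the triangle has degree $3$ or $4$, and at most one has degree $4$. So we may assume $d(x)=d(y)=3$ and $d(z)\le 4$.

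Next, the recoloring. By \Cref{r1}, applied to the edge $xy$, we color all vertices of $G^2$ with $10$ colors except $x$ and $y$, and we now bound the colored $G^2$-neighbors of $x$. Let $x'$ be the third neighbor of $x$ and $y'$ the third neighbor of $y$. The colored vertices within distance $2$ of $x$ in $G$ lie among the following:
\begin{itemize}
\item $z$, $x'$ and $y'$;
\item at most $d(z)-2\le 2$ further neighbors of $z$;
\item at most $d(x')-1\le 3$ further neighbors of $x'$.
\end{itemize}
This gives at most $8$ colored $G^2$-neighbors of $x$, so $|L(x)|\ge 2$, and symmetrically $|L(y)|\ge 2$. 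Since $x$ and $y$ are adjacent in $G^2$, we color $x$ from its list and then $y$, which still has at least one available color. This yields a $2$-distance $10$-coloring of $G$, a contradiction.

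The only step needing care is the neighbor count. The estimate must be uniform in whether $z$ or $x'$ is the $4$-vertex, and the bound of $8$ holds in every case because $z$ contributes at most $2$ new vertices and $x'$ at most $3$. No case splitting beyond the degree analysis seems necessary.
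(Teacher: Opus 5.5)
Your proof is correct and is essentially the paper's argument: the same degree analysis (at most one $4$-vertex on the triangle by $4$-irregularity, no $2$-vertex by Lemma~\ref{l2}), followed by uncoloring two adjacent $3$-vertices of the triangle via Remark~\ref{r1} and coloring them from lists of size at least $2$. Your explicit count of at most $8$ colored $G^2$-neighbors supplies the detail the paper leaves implicit, and it remains valid when both $z$ and $x'$ are $4$-vertices, since $z$ and $x'$ need not be adjacent.
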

\begin{proof} Suppose that $G$ has a 3-cycle $C$. Set $C=v_1v_2v_3$. Since $G$ is $4$-irregular, at most one vertex of $C$ is of degree $4$. By \Cref{l2}, we deduce that $C$ contains no vertex of  degree $2$. So, $C$ contains at least two vertices of degree $3$. Without loss of generality, suppose that $d(v_1)=d(v_2)=3$. By \Cref{r1}, we can greedily color all the vertices in $G^2$ using 10 colors except $v_1$ and $v_2$. Since  $|L(v_1)| \ge 2$ and $|L(v_2)| \ge 2$, we can greedily color $v_1$ and $v_2$ in $G^2$ to obtain $\chi_2(G) \leq 10$, a contradiction. \end{proof}

\begin{lemma} \label{l6} A $2$-vertex is not contained in a $4$-cycle. \end{lemma}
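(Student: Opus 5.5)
Suppose, to the contrary, that a $2$-vertex $v$ lies on a $4$-cycle $C=vxwy$, where $N(v)=\{x,y\}$. I will first pin down the degrees on $C$ using the earlier lemmas, and then recolor a short edge, in the same style as \Cref{l2} and \Cref{l3}.

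\textbf{Degrees on $C$.} By \Cref{l2}, both $x$ and $y$ are $4$-vertices. The vertex $w$ is adjacent to the two $4$-vertices $x$ and $y$. Since $G$ is $4$-irregular, $d(w)\le 3$. Moreover $d(v,w)=2$, so \Cref{l3} forbids $d(w)=2$. Hence $d(w)=3$. Write $N(x)=\{v,w,x_1,x_2\}$, where $d(x_1),d(x_2)\le 3$ again by irregularity. Write $N(y)=\{v,w,y_1,y_2\}$ and $N(w)=\{x,y,w_1\}$.

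\textbf{Recoloring.} By \Cref{r1}, applied to the edge $vx$, I color all vertices of $G^2$ with $10$ colors except $v$ and $x$. Then I count the colored $G^2$-neighbors of each.

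For $x$, the $G^2$-neighbors are:
\begin{itemize}
\item its four neighbors;
\item $y$, reached through both $v$ and $w$;
\item $w_1$, reached through $w$;
\item at most two further vertices through each of $x_1$ and $x_2$, since these have degree at most $3$.
\end{itemize}
This gives at most $4+1+1+4=10$ vertices. One of them is the uncolored $v$, so at most $9$ are colored, and $|L(x)|\ge 1$.

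For $v$, the $G^2$-neighbors are contained in $\{x,y,w,x_1,x_2,y_1,y_2\}$. Excluding the uncolored $x$, at most $6$ are colored, so $|L(v)|\ge 4$.

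I then color $x$ first, which removes at most one color from $L(v)$, and finally color $v$. This gives $\chi_2(G)\le 10$, a contradiction.

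\textbf{Main obstacle.} The delicate step is the count for $x$. It relies on two facts: the $4$-cycle makes $y$ a common second neighbor of $x$ through both $v$ and $w$, and irregularity forces every neighbor of $x$ to have degree at most $3$. If the count were off by one, I would instead uncolor $v$ together with $w$ (deleting $vy$ or $wx$) and use the saving-a-color technique at the common vertex $y$.
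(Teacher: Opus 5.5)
Your proof is correct, but it takes a different route from the paper's.

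\textbf{The paper's route.} The paper deletes the $2$-vertex $v$ itself rather than an edge. By minimality, $G-v$ has a $2$-distance $10$-coloring. The only pair whose distance can exceed $2$ after removing $v$ is $\{x,y\}$. Because of the $4$-cycle, $x$ and $y$ still share the neighbor $w$, so they already receive distinct colors. It then remains only to color $v$, which has at most $7$ neighbors in $G^2$ and hence at least $3$ free colors. This argument needs nothing beyond $\Delta\le 4$ and the common neighbor $w$. It makes no appeal to \Cref{l2}, \Cref{l3}, or irregularity, and it has plenty of slack.

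\textbf{Your route.} You stay inside the edge-deletion framework of \Cref{r1}: you uncolor $v$ and $x$ and make the count at $x$ close. That count needs two facts. First, every neighbor of $x$ has degree at most $3$, which follows from irregularity once \Cref{l2} forces $d(x)=4$. Second, $y$ is counted only once although it is reached through both $v$ and $w$. With these, your bound $|L(x)|\ge 1$ is tight. Your use of \Cref{l3} to get $d(w)=3$ is harmless but unnecessary, since $d(w)\le 3$ suffices.

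\textbf{Relation to \Cref{l7}.} Your computation is exactly the paper's proof of \Cref{l7} specialized to the $4$-vertex $x$, which lies on the $4$-cycle $C$ and has the $2$-neighbor $v$. So your route in effect shows that \Cref{l6} follows from \Cref{l2} together with \Cref{l7}. The fallback you mention in your last paragraph is not needed, because the count does close.
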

\begin{proof} Suppose that there exists a $2$-vertex $v$ contained in a $4$-cycle. Let $G'=G-v$. By definition of minimal counterexample, we have $\chi_2(G')\leq 10$. Since $v$ is contained in a $4$-cycle, we deduce that the neighbors of $v$ receive different colors in $G'$. Thus, we only need to color $v$ to extend the $2$-distance $10$-coloring to the whole graph $G$.
Since $d_{G^2}(v)\leq 7$, we can color $v$ by an available color to obtain $\chi_2(G) \leq 10$, a contradiction.
\end{proof}
\begin{lemma} \label{l7}
    Let $v$ be a $4$-vertex contained in a $4$-cycle. Then, $v$ has no $2$-neighbor.
\end{lemma}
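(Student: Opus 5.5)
The plan is to use the minimal-counterexample setup of \Cref{r1} on the edge joining $v$ to its $2$-neighbor, and to show that the $4$-cycle creates a repeated vertex in the second neighborhood of $v$. That repetition saves one color at $v$.

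Suppose, to the contrary, that $v$ is a $4$-vertex lying on a $4$-cycle $C=vabc$ and that $v$ has a $2$-neighbor $u$. Let $N(v)=\{u,a,c,d\}$ and $N(u)=\{v,u'\}$. By \Cref{l6}, $u$ lies on no $4$-cycle, so $u\notin\{a,c\}$. Since $G$ is $4$-irregular and $d(v)=4$, every neighbor of $v$ has degree at most $3$. In particular $d(a),d(c),d(d)\le 3$. By \Cref{l2}, $u'$ is not a $3^-$-vertex, so $d(u')=4$.

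By \Cref{r1}, applied to the edge $uv$, we color all vertices of $G^2$ with $10$ colors except $u$ and $v$. Deleting $uv$ only affects pairs at distance at most $2$ through $uv$, and each such pair involves $u$ or $v$, which are recolored anyway. We now bound the lists.

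\textbf{The list of $v$.} The $G^2$-neighbors of $v$ are its four neighbors together with the other neighbors of each of them. These are: $u'$ through $u$; at most two vertices each through $a$, $c$ and $d$, since these have degree at most $3$. The vertex $b$ is counted both through $a$ and through $c$. Hence $d_{G^2}(v)\le 4+1+2+2+2-1=10$. One of these neighbors, $u$, is uncolored, so $v$ has at most $9$ colored $G^2$-neighbors and $|L(v)|\ge 1$.

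\textbf{The list of $u$.} The $G^2$-neighbors of $u$ are $v$, $u'$, the three other neighbors $a,c,d$ of $v$, and the three other neighbors of $u'$. So $d_{G^2}(u)\le 8$, with $v$ uncolored, giving $|L(u)|\ge 3$.

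We then color $v$ first. This removes one color from $L(u)$, leaving at least $2$ available colors, so we can color $u$. This gives a $2$-distance $10$-coloring of $G$, a contradiction.

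The only delicate step is the count at $v$, where the bound is tight. Without the $4$-cycle we would get $d_{G^2}(v)\le 11$ and $L(v)$ could be empty; the shared vertex $b$ is exactly what saves one color. We should also check that no further coincidences break the argument. Coincidences such as $b$ also being a neighbor of $d$, or $u'$ coinciding with another vertex, only decrease the counts, so they cause no harm.
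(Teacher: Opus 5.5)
Your proof is correct and follows the paper's argument exactly. You color everything except $u$ and $v$ via \Cref{r1}, obtain $|L(v)|\ge 1$ and $|L(u)|\ge 3$, and then color $v$ followed by $u$. Your explicit count, in which the vertex $b$ opposite $v$ on the $4$-cycle is counted twice so that $d_{G^2}(v)\le 10$, is precisely the justification the paper leaves implicit.
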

\begin{proof}
    Suppose, to the contrary, that $v$ has a $2$-neighbor $u$. Greedily color all the vertices in $G^2$ using 10 colors except $u$ and $v$. Since $|L(v)| \ge 1$ and $|L(u)| \ge 3$, we can greedily color $v$ then $u$ in $G^2$ to obtain $\chi_2(G) \leq 10$, a contradiction.
\end{proof}
\begin{lemma} \label{l8} Every $4$-cycle contains two vertices of degree $3$ and two vertices of degree $4$. \end{lemma}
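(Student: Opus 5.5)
Let $C=v_1v_2v_3v_4$ be a $4$-cycle. By \Cref{l6}, no vertex of $C$ has degree $2$, so every vertex of $C$ has degree $3$ or $4$. Since $G$ is $4$-irregular, the $4$-vertices of $C$ are pairwise non-adjacent, so $C$ has at most two of them, and if there are two they are opposite. Hence it is enough to rule out a $4$-cycle with at least three $3$-vertices. In any such cycle we can find two adjacent $3$-vertices $v_1,v_2$ with $d(v_3)=3$, while $v_4$ has degree $3$ or $4$. The plan is to use \Cref{r1}: delete the edge $v_1v_2$, color $G^2$ greedily except at $v_1$ and $v_2$, and then finish the coloring.

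The main task is to bound the lists carefully. In $G^2$, a $3$-vertex has at most $3+3\cdot 3=12$ neighbors, since its neighbors have degree at most $4$ and so each contributes at most $3$ further vertices. If $d(v_4)=4$, then $v_1$ and $v_3$ are the only possible $4$-neighbors of $v_4$'s neighbors on the cycle, so these counts are about right. The $4$-cycle creates coincidences that reduce the count.

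Take $v_1$. Its neighbors are $v_2$, $v_4$ and a third vertex $x$. The second neighborhood through $v_2$ contains $v_3$ and one more vertex. The second neighborhood through $v_4$ also contains $v_3$ and up to two more vertices. So $v_3$ is counted twice. Also $v_2$ is uncolored, so in $G^2$ the vertex $v_1$ has at most $12-1-1=10$ colored neighbors, with the exact count depending on $d(v_4)$ and on $x$. That gives only $|L(v_1)|\ge 0$, which is not enough, so we need a sharper count.

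Instead I would leave more vertices uncolored. The plan is to uncolor all of $v_1,v_2,v_3$, or all four vertices of $C$, after coloring $G^2$ minus the edge by minimality. Each cycle vertex sees at least one less because of the double counting on the $4$-cycle, and each uncolored vertex on $C$ lies at distance at most $2$ from the other three. Suppose $v_1,v_2,v_3$ are $3$-vertices and $d(v_4)\le 4$. Then $v_1$ and $v_3$ have at most $12-1=11$ distinct vertices within distance $2$. Removing the three other uncolored cycle vertices leaves at most $8$ colored ones, so $|L(v_1)|,|L(v_3)|\ge 2$. The middle vertex $v_2$ gets a similar bound. For $v_4$, if it has degree $4$, we get at most $4+4\cdot 2 - 1=11$ vertices in $G^2$, minus $3$ uncolored cycle vertices, so $|L(v_4)|\ge 2$ or so. If instead $d(v_4)=3$, all four lists have size at least $2$ or $3$.

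Then we finish the coloring in the order that works best. First color $v_4$, which has the smallest list, and then color the $3$-vertices, using the save-a-color argument on a non-adjacent pair $v_1,v_3$ to protect $v_2$.

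The hard step is the precise list-size count. When $d(v_4)=4$, we need to show that the lists are large enough to color a $K_4$ in $G^2$. That may require $|L|\ge 1,2,3,4$ in a suitable order, or a Hall-type argument. If the count falls short, I would first try saving a color at $v_2$ by giving $v_1$ and $v_3$ a common color (they are at distance $2$, so they cannot share a color). In that case I would instead pick colors for $v_1$ and $v_3$ that lie outside $L(v_2)$.
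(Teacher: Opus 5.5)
\textbf{There is a genuine gap: the proposal abandons the plan that works and never completes any coloring.} Your reduction matches the paper: by Lemma~\ref{l6} and $4$-irregularity you may assume $d(v_1)=d(v_2)=d(v_3)=3$. Your first plan, keeping only $v_1,v_2$ uncolored as in Remark~\ref{r1}, is exactly what the paper does.

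You dropped that plan because of a miscount. Starting from the generic bound $3+3\cdot 3=12$, you subtracted only the double count of $v_3$ and the uncolored $v_2$. You forgot that $v_2$ is itself a $3$-vertex, so it contributes only two second neighbours ($v_3$ and its third neighbour $y$), not three. The correct count is that $N_{G^2}(v_1)\subseteq\{v_2,v_4,x\}\cup\{v_3,y\}\cup(N(v_4)\setminus\{v_1,v_3\})\cup(N(x)\setminus\{v_1\})$ has at most $3+2+2+3=10$ vertices. One of these ($v_2$) is uncolored, so $|L(v_1)|\ge 1$. For $v_2$, both $v_1$ and $v_3$ have degree $3$ and both see $v_4$, so $|N_{G^2}(v_2)|\le 3+2+1+3=9$ and $|L(v_2)|\ge 2$; this is where $d(v_3)=3$ is used. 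Coloring $v_1$ and then $v_2$ finishes the proof, and that is the missing step.

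The fallback with all four cycle vertices uncolored does not close as written. The bounds you state ($|L(v_1)|,|L(v_3)|,|L(v_4)|\ge 2$, and something similar for $v_2$) are not enough to color the $K_4$ that $v_1,v_2,v_3,v_4$ form in $G^2$, and you explicitly leave the count open. The suggested save-a-color on the pair $v_1,v_3$ is unavailable: $v_1$ and $v_3$ are at distance $2$, hence adjacent in $G^2$. Choosing both of their colors outside $L(v_2)$ is not supported by any inequality.

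With the same careful counting this route can be made to work. For $d(v_4)=4$ one gets $|L(v_4)|\ge 2$, $|L(v_1)|,|L(v_3)|\ge 3$ and $|L(v_2)|\ge 4$, and greedy coloring in the order $v_4,v_1,v_3,v_2$ succeeds. For $d(v_4)=3$ every list has size at least $4$. This is strictly more work than the two-vertex argument, and these bounds are exactly what your proposal fails to establish.
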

\begin{proof} Let $C=v_1v_2v_3v_4$ be a $4$-cycle in $G$. Suppose, to the contrary, that $C$ does not contain exactly two $3$-vertices. Since $G$ is 4-irregular and a $2$-vertex is not contained in a $4$-cycle by Lemma \ref{l6}, we deduce that $C$ contains at least three vertices of degree $3$. Without loss of generality, suppose that $d(v_1)=d(v_2)=d(v_3)=3$. By \Cref{r1}, we can greedily color all the vertices in $G^2$ using 10 colors except $v_1$ and $v_2$. Since  $|L(v_1)| \ge 1$ and $|L(v_2)| \ge 2$, we can greedily color $v_1$ then $v_2$ in $G^2$ to obtain $\chi_2(G) \leq 10$, a contradiction. Hence, $C$ contains exactly two vertices of degree $3$. Therefore, $C$ also contains two vertices of degree $4$.
\end{proof} 
We deduce that if a $3$-vertex is incident to a $4$-face, then it has at most one $3$-neighbor.
\begin{lemma} \label{l9} A $3$-vertex is not contained in two adjacent $4$-cycles.\end{lemma}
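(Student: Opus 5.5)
The plan is to first pin down the local structure forced by \Cref{l8}, and then find a reducible configuration by deleting an edge and erasing the colors of a few vertices around the two $4$-cycles.

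Let the two adjacent $4$-cycles be $C_1$ and $C_2$, sharing exactly one edge $uw$. By \Cref{l8}, each of them contains two $3$-vertices and two $4$-vertices. Since $4$-vertices are nonadjacent, the degrees alternate $3,4,3,4$ around each cycle. So one endpoint of $uw$, say $u$, is a $3$-vertex and the other, $w$, is a $4$-vertex. We may then write
\[
C_1=uwxy, \qquad C_2=uwzt,
\]
with $d(x)=d(z)=3$ and $d(y)=d(t)=4$. By \Cref{l5} (girth at least $4$) and since the cycles share only $uw$, the vertices $x,y,z,t$ are distinct. Hence $N(u)=\{w,y,t\}$, and $N(w)=\{u,x,z,w'\}$ for some vertex $w'$. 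Whichever $3$-vertex lies in both cycles, this is the configuration it creates, so it suffices to show it is reducible.

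Next I bound the $G^2$-degrees. For $u$ we have $N_{G^2}(u)\subseteq\{w,y,t,x,z,w',y_1,y_2,t_1,t_2\}$, where $y_1,y_2$ and $t_1,t_2$ are the remaining neighbors of $y$ and $t$. So $d_{G^2}(u)\le 10$, because $x$ and $z$ are each reached twice. Similarly $d_{G^2}(x)\le 10$ and $d_{G^2}(z)\le 10$, since $u$ is reached through both $w$ and $y$ (resp.\ $t$). Moreover $d_{G^2}(w)\le 11$, because $y$ and $t$ are reached through two of its neighbors each.

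The reduction goes as follows. Delete the edge $uw$, take a $2$-distance $10$-coloring of $G-uw$ by minimality, and erase the colors of $u,w,x,z$ (and possibly also $y$). With only $u,w,x,z$ erased, each of $u,x,z,w$ keeps at least $3$ available colors. However, these four vertices are pairwise adjacent in $G^2$, since $w$ is a common neighbor of all of them. So the naive bound gives lists of size $3$ on a $K_4$, which is not enough.

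The main obstacle is to gain one extra color. My first attempt is to also uncolor $y$. The vertex $y$ lies within distance $2$ of $u$, $x$ and $w$, so their lists grow to at least $4$, while $|L(y)|\ge 1$. Crucially, $y$ and $z$ are nonadjacent in $G^2$ unless they share a neighbor; if they do share one, this creates a short cycle that either reduces the $G^2$-degrees further or contradicts \Cref{l8}. I would then use the saving-a-color technique on the pair $\{y,z\}$ relative to $w$ (or $u$). This means giving $y$ and $z$ a common color, or giving one of them a color outside $L(w)$, so that $w$, $u$ and $x$ each lose at most one color. After that, $u,x,w$ have lists of size at least $3$, $3$ and $2$ respectively, and they are colored greedily in the order $w,u,x$.

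If the list counts are too tight when some of the named vertices coincide, I would treat those coincidences separately. Each coincidence lowers a $G^2$-degree by at least one, which supplies the missing color directly.
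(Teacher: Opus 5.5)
Your setup is right: the shared edge $uw$ joins the $3$-vertex $u$ to a $4$-vertex $w$, and $x,y,z,t$ are distinct. The reduction, however, fails as proposed.

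\textbf{The counting is off.} The bound $d_{G^2}(x)\le 10$ is wrong. We have $N(x)=\{w,y,x'\}$, and nothing prevents the third neighbor $x'$ from being a $4$-vertex, which contributes three second neighbors. Only $u$ is counted twice, so $d_{G^2}(x)\le 11$, and likewise for $z$. (Conversely $d_{G^2}(w)\le 10$, since $w'$ is a $3^-$-vertex; with your bound $11$, your list size for $w$ would not follow.) With $u,w,x,y,z$ uncolored, the guaranteed sizes are therefore $|L(u)|,|L(w)|\ge 4$, $|L(x)|\ge 3$ and $|L(y)|,|L(z)|\ge 2$.

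\textbf{The saving step fails.} The save needs $|L(y)|+|L(z)|>|L(w)|$, which fails both here ($2+2=4$) and with your own numbers ($1+3=4$). Moreover, a save relative to $w$ protects only $w$. When it is realized by giving one of $y,z$ a color outside $L(w)$, the vertices $u$ and $x$ are $G^2$-adjacent to both $y$ and $z$ and may each lose two colors. So ``each lose at most one color'' is unjustified.

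\textbf{This uncolored set cannot work at all.} When $d(y,z)=3$, $G^2[\{u,w,x,y,z\}]$ is $K_5$ minus $yz$. Take $L(u)=L(w)=\{1,2,3,4\}$, $L(x)=\{1,2,3\}$, $L(y)=\{1,2\}$, $L(z)=\{3,4\}$. These meet every guaranteed bound, yet once $y,z$ are colored only two colors remain for the triangle $u,w,x$. Your fallback when $y,z$ share a neighbor does not help either. That only creates a $5$-cycle, about which Lemma~\ref{l8} says nothing, and the five lists (of sizes $4,4,3,3,4$ on a $K_5$) can all lie inside four colors.

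\textbf{What is missing.} You need to uncolor $t$ as well and use planarity to choose the pair. With all of $V(C_1)\cup V(C_2)$ uncolored, $|L(u)|,|L(w)|\ge 5$ and $|L(x)|,|L(y)|,|L(z)|,|L(t)|\ge 3$. The union $C_1\cup C_2$ is a theta graph with branch vertices $u,w$. By girth and $4$-irregularity, a path of length at most $2$ from $x$ to $t$, or from $y$ to $z$, cannot pass through other vertices of $C_1\cup C_2$. It therefore runs inside the region bounded by the $6$-cycle $uyxwzt$, along which the two pairs interleave. Two such paths would have to meet at an interior vertex. That vertex would then be adjacent to $x,y,z,t$, hence a $4$-vertex adjacent to the $4$-vertices $y$ and $t$, which is impossible. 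So $d(x,t)=3$ or $d(y,z)=3$.

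The paper takes $d(x,t)=3$ and saves a color at $u$ ($3+3>5$). This leaves $|L(u)|\ge 4$, $|L(w)|\ge 3$ and $|L(y)|,|L(z)|\ge 1$ (at least $2$ each if $d(y,z)\le 2$), and it finishes greedily with $y,z,w,u$. Your idea of saving via $\{y,z\}$, relative to $w$ or to $u$, works equally well in the symmetric case $d(y,z)=3$ once $t$ is also uncolored.
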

\begin{proof}
Suppose, to the contrary, that there exists a $3$-vertex $v$ contained in two adjacent $4$-cycles $C_1$ and $C_2$.
    Set $C_1=vv_1v_2v_3$ and $C_2=vv_3v_4v_5$, where $vv_3$ is the common edge shared by $C_1$ and $C_2$. By Lemma \ref{l8}, we deduce that $d(v_1)=d(v_3)=d(v_5)=4$ and $d(v_2)=d(v_4)=3$. We greedily color all vertices in $G^2$ using 10 colors except $V(C_1)\cup V(C_2)$. So, $|L(v)|\ge 5$, $|L(v_1)|\ge 3$, $|L(v_2)|\ge 3$,  $|L(v_3)|\ge 5$, $|L(v_4)|\ge 3$, and $|L(v_5)|\ge 3$. Since $G$ is $4$-irregular and planar, we deduce that either $d(v_1,v_4)=3$ or $d(v_2,v_5)=3$. Without loss of generality, suppose that $d(v_2,v_5)=3$. Since $|L(v_2)|+|L(v_5)|>5$ and  $d(v_2,v_5)=3$, we can color $v_2$ by $c$ and $v_5$ by $c'$ such that $|L(v)\backslash \{c,c'\}|\ge 4$. Color $v_2$ by $c$ and $v_5$ by $c'$ and denote by $L_1$ the new list of available colors of each vertex. 
    
    So, we have $|L_1(v)|\ge 4$, $|L_1(v_1)|\ge 1$, $|L_1(v_3)|\ge 3$, and $|L_1(v_4)|\ge 1$. Note that if $d(v_1,v_4)\leq 2$, then $|L_1(v_1)|\ge 2$ and $|L_1(v_4)|\ge 2$. Hence, we can greedily color in order $v_1$, $v_4$, $v_3$, and $v$ to obtain $\chi_2(G)\leq 10$, a contradiction.
\end{proof}
We deduce that $v$ is incident to at most one $4$-face.
\begin{lemma} \label{l10}
    Let $v$ be a $3$-vertex contained in two $4$-cycles. Then, $v$ has no $3$-neighbor.
\end{lemma}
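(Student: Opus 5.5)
Suppose, to the contrary, that $v$ is a $3$-vertex contained in two $4$-cycles $C_1$ and $C_2$ and that $v$ has a $3$-neighbor $u$. The plan is to show first that this can only happen in one rigid configuration, and then to rule that configuration out by a recoloring argument.

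\emph{Step 1: forcing the configuration.} By \Cref{l8}, every $4$-cycle contains exactly two $3$-vertices and two $4$-vertices. Since $G$ is $4$-irregular, the two $4$-vertices of a $4$-cycle are not adjacent, so they are opposite on the cycle. Hence the two cycle-neighbors of any $3$-vertex on a $4$-cycle are $4$-vertices. Let $N(v)=\{a,b,c\}$. Each $C_i$ uses two of the three edges at $v$.

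If $C_1$ and $C_2$ use different pairs of edges at $v$, then together they cover all of $a,b,c$. All three would then be $4$-vertices, contradicting the existence of $u$. So both cycles use the same two edges $va$ and $vb$, with $d(a)=d(b)=4$ and $u=c$.

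Write $C_1=vaxb$ and $C_2=vayb$ with $x\neq y$. By \Cref{l8}, $d(x)=d(y)=3$. (If $C_1$ and $C_2$ shared exactly one edge they would be adjacent, which \Cref{l9} already forbids; this is consistent with the picture above.) We thus obtain a $K_{2,3}$-like structure: $a$ and $b$ are both adjacent to $v$, $x$ and $y$. Let $a'$ and $b'$ be the fourth neighbors of $a$ and $b$, let $x'$ and $y'$ be the third neighbors of $x$ and $y$, and let $u_1,u_2$ be the other neighbors of $u$. By \Cref{l5}, all the relevant vertices are distinct except for the coincidences forced by planarity.

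\emph{Step 2: coloring.} By \Cref{r1}, I would color everything except a small set $S$ containing the edge $uv$. I would take $S=\{u,v,x,y\}$, and possibly also $a$ or $b$. The counts with $S=\{u,v,x,y\}$ are:
\begin{itemize}
\item $v$ sees only $a,b,a',b',u_1,u_2$ colored, so $|L(v)|\ge 4$;
\item $x$ sees at most $a,b,a',b',x'$ and the $\le 3$ other neighbors of $x'$, so $|L(x)|\ge 2$, and symmetrically $|L(y)|\ge 2$;
\item $u$ sees $a,b,u_1,u_2$ and up to $3+3$ further vertices through $u_1$ and $u_2$, so only $|L(u)|\ge 0$ is guaranteed.
\end{itemize}
Since $x$ and $y$ are at distance $2$, they must get distinct colors. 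Also $u$ is at distance $2$ from $a$ and $b$, but not necessarily from $x$ or $y$.

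\emph{Main obstacle.} The difficulty is $u$: its list can be empty when both $u_1$ and $u_2$ are $4$-vertices with all second neighbors distinct. To fix this, I would first uncolor additionally one neighbor, say $a$, so that $u$ gains a color. Then $|L(u)|\ge 1$, and $a$'s list is large because it sees the uncolored $v,x,y$ (and $u$).

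The order would be:
\begin{enumerate}
\item Color $u$ first.
\item Save a color at $v$ using the pair $x,y$ or $x,u$, exploiting that $d(x,u)\geq 3$ in the generic case, as in the saving technique of the introduction.
\item Color $a$, then $x$ and $y$.
\item Finish with $v$, whose list of size $\ge 4$ absorbs the remaining constraints.
\end{enumerate}
If the counting for $a$ is too tight, I would instead include both $a$ and $b$ in $S$ and use \Cref{l9} together with planarity to guarantee that some pair among $\{x',y',u_1,u_2\}$ is at distance $\ge 3$. That pair yields one more saving. Either way, the resulting $2$-distance $10$-coloring of $G$ contradicts the choice of $G$.
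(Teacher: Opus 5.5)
Your Step 1 is correct and matches the paper's structural deduction. Both $4$-cycles must use the edges from $v$ to its two $4$-neighbors $a,b$, so $\{a,b\}$ and $\{v,x,y\}$ span a $K_{2,3}$, with $d(x)=d(y)=3$ and $u$ the third neighbor of $v$.

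The gap is in Step 2. You never verify that the uncolored set can be colored, and the mechanism you sketch does not work:
\begin{itemize}
\item A save at $v$ using $x$ and $y$ is impossible. As you note yourself, $x$ and $y$ are at distance $2$ (via $a$), so they cannot share a color. The save would then need a color of $L(x)\cup L(y)$ outside $L(v)$, and the list sizes do not guarantee one.
\item The pair $x,u$ is no longer available once $u$ has been colored in your first step. In any case, the guaranteed sizes $|L(u)|\ge 1$, $|L(x)|\ge 3$ do not give $|L(u)|+|L(x)|>|L(v)|$.
\item Your claim that $|L(v)|\ge 4$ ``absorbs the remaining constraints'' fails as stated. $v$ has four uncolored $G^2$-neighbors $u,a,x,y$ that are colored before it.
\item The fallback (a distance-$3$ pair among $x',y',u_1,u_2$) saves nothing, since those vertices are already colored and are not in $S$.
\end{itemize}

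The plan can be repaired by recounting with $S=\{u,v,a,x,y\}$:
\begin{itemize}
\item $v$ has only $b,a',b',u_1,u_2$ colored within distance $2$, so $|L(v)|\ge 5$.
\item $a$ sees $a',b,x',y'$ and at most two more vertices through $a'$. Here $a'$ has degree at most $3$, being adjacent to the $4$-vertex $a$. So $|L(a)|\ge 4$.
\item As before, $|L(x)|,|L(y)|\ge 3$ and $|L(u)|\ge 1$.
\end{itemize}
With these bounds, the plain greedy order $u,a,x,y,v$ works with no saving at all. If $d(u,x)=2$, then $|L(u)|\ge 2$ and $|L(x)|\ge 4$, which compensates.

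The simpler idea you missed is that $u$ never needs to be uncolored. The paper uncolors only the edge $va$:
\begin{itemize}
\item Since $v,x,y$ all share the neighbor $b$, the vertex $a$ has at most $10$ vertices within distance $2$, one of which is $v$. Hence $|L(a)|\ge 1$.
\item The set of vertices within distance $2$ of $v$ is just $\{a,b,u,x,y,a',b',u_1,u_2\}$. This is exactly where $d(u)=3$ is used. With $a$ uncolored, $|L(v)|\ge 2$.
\end{itemize}
Coloring $a$ and then $v$ finishes the proof.
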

\begin{proof}
  Suppose that there exists a $3$-vertex $v$ contained in two $4$-cycles $C$ and $C'$ and having a $3$-neighbor. Set $C=vv_1v_2v_3$. By Lemma \ref{l8} and \ref{l9}, we deduce that $C'$ is of the form $v_1v_2v_3x$ where $x\in (N(v_1)\cap N(v_3))\backslash \{v\}$. Greedily color all the vertices in $G^2$ using 10 colors except $v$ and $v_1$. So, $|L(v)|\ge 2$ and $|L(v_1)|\ge 1$. Greedily color $v_1$ then $v$ in $G^2$ to obtain $\chi_2(G)\leq 10$, a contradiction.
\end{proof}
\begin{lemma} \label{l11} Let $u$ and $v$ be two adjacent $3$-vertices such that $u$ and $v$ have two neighbors of degree $3$. Then, the third neighbors of $u$ and $v$ are of degree $4$. 
\end{lemma}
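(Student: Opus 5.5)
The plan is to argue by contradiction in the same style as the preceding lemmas: delete the edge $uv$, color everything except two adjacent vertices, and count lists. I read the hypothesis as follows. $u$ and $v$ are adjacent $3$-vertices. Besides $v$, the vertex $u$ has a second $3$-neighbor $u_1$, and its third neighbor is $u_2$. Symmetrically, besides $u$, the vertex $v$ has a second $3$-neighbor $v_1$, and its third neighbor is $v_2$. The goal is to show $d(u_2)=d(v_2)=4$.

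First, I rule out the small degrees. Since $u_2$ is adjacent to the $3$-vertex $u$, Lemma \ref{l2} forbids $d(u_2)=2$. By the minimality remark $\delta(G)\ge 2$, so $d(u_2)\in\{3,4\}$, and the same holds for $v_2$. By symmetry it then suffices to show $d(u_2)\neq 3$.

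So suppose, to the contrary, that $d(u_2)=3$, so that all three neighbors $v,u_1,u_2$ of $u$ are $3$-vertices. By \Cref{r1}, I color all vertices of $G^2$ with $10$ colors except $u$ and $v$, and then bound the $G^2$-degrees.

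\begin{itemize}
\item For $u$: it has $3$ neighbors, and each of them, being a $3$-vertex, contributes at most $2$ further vertices at distance $2$. So $d_{G^2}(u)\le 3+3\cdot 2=9$. Since $v$ is uncolored, $u$ sees at most $8$ colored vertices, and $|L(u)|\ge 2$.
\item For $v$: it has $3$ neighbors $u,v_1,v_2$. The vertices at distance $2$ through $u$ and through $v_1$ number at most $2$ each, since both are $3$-vertices. Those through $v_2$ number at most $3$, since $d(v_2)\le 4$. So $d_{G^2}(v)\le 3+2+2+3=10$. As $u$ is uncolored, $|L(v)|\ge 1$.
\end{itemize}

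I then color $v$ first and $u$ second; $u$ still has a color left because $|L(u)|\ge 2$. This gives $\chi_2(G)\le 10$, a contradiction. Hence $d(u_2)=4$, and by the symmetric argument $d(v_2)=4$.

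The only real checkpoint is the counting. Overlaps among second neighbors only help, and by Lemma \ref{l5} there are no triangles, so no neighbor is counted twice in a harmful way. If the intended reading of the hypothesis is instead that exactly one of $u,v$ has the extra $3$-neighbor, the same computation applies with the roles of $u$ and $v$ chosen so that the vertex with three $3$-neighbors is colored last.
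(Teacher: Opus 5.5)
Your proof is correct and is essentially the paper's argument with the roles of $u$ and $v$ swapped: assuming $d(u_2)=3$ gives $|L(u)|\ge 2$ and $|L(v)|\ge 1$ after coloring everything except $u$ and $v$, so coloring $v$ and then $u$ finishes (your explicit exclusion of $d(u_2)=2$ via Lemma~\ref{l2} just spells out what the paper leaves implicit). One caveat: your closing remark about the alternative reading is false, since if $v$ had no second $3$-neighbor the count only gives $d_{G^2}(v)\le 3+2+3+3=11$, i.e.\ $|L(v)|\ge 0$, so the hypothesis that both $u$ and $v$ have a second $3$-neighbor is genuinely needed---fortunately that is the intended reading, and the paper's proof uses it too.
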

\begin{proof}
    Suppose, to the contrary, that the third neighbor of $v$ is of degree $3$. We greedily color all the vertices in $G^2$ using 10 colors except $u$ and $v$. Since $|L(u)|\ge 1$ and $|L(v)| \ge 2$, we can greedily color $u$ then $v$ to obtain $\chi_2(G) \leq 10$, a contradiction. Similarly, the third neighbor of $u$ is of degree $4$.
\end{proof} 

\begin{figure}[htbp]
    \centering
    \begin{tikzpicture}[
        scale=0.8,
        vertex/.style={circle, fill=black, inner sep=2pt},
        line/.style={thick}
    ]
        \node[vertex, label=left:$u_4$]       (u4) at (0, 4.5) {};
        \node[vertex, label=left:$u_1$]       (u1) at (0, 3.0) {};
        \node[vertex, label=left:$v_1$]       (v1) at (0, 1.5) {};
        \node[vertex, label=below left:$v_4$] (v4) at (0, 0) {};

        \node[vertex, label=right:$u_3$]        (u3) at (2.5, 4.5) {};
        \node[vertex, label=right:$u_2$]        (u2) at (2.5, 3.0) {};
        \node[vertex, label=right:$y$]          (y)  at (2.5, 2.25) {};
        \node[vertex, label=right:$v_2$]        (v2) at (2.5, 1.5) {};
        \node[vertex, label=below right:$v_3$]  (v3) at (2.5, 0) {};

        \draw[line] (u4) -- (u3);
        \draw[line] (u1) -- (u2);
        \draw[line] (v1) -- (v2);
        \draw[line] (v4) -- (v3);

        \draw[line] (u4) -- (u1) -- (v1) -- (v4);
        \draw[line] (u3) -- (u2) -- (y) -- (v2) -- (v3);
    \end{tikzpicture}
    \caption{Illustration of \Cref{l12}}
    \label{f1}
\end{figure}

\begin{lemma} \label{l12} Let $v_1$ and $u_1$ be two vertices of degree $3$ contained in a $4$-face such that $u_1$ and $v_1$ are adjacent. Then, $u_1$ and $v_1$ are not incident to any $5$-face.
\end{lemma}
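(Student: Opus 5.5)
Throughout, let $f$ be the $4$-face containing the edge $u_1v_1$. I will argue by contradiction: assume one of $u_1,v_1$, say $u_1$, is incident to a $5$-face $f'$, and show that a $2$-distance $10$-coloring of $G$ exists.

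\textbf{Step 1: fix the local structure.} By \Cref{l8}, $f$ has exactly two $3$-vertices, namely $u_1$ and $v_1$, so its other two vertices are $4$-vertices. Write $f=u_1v_1v_4u_4$ with $d(u_4)=d(v_4)=4$. Let $u_2$ be the third neighbor of $u_1$ and $v_2$ the third neighbor of $v_1$.

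\textbf{Step 2: locate the $5$-face.} Since $u_1$ is a $3$-vertex, it is incident to exactly three faces, and the face containing $u_4u_1v_1$ is $f$. Hence $f'$ contains the edge $u_1u_2$. Two cases arise, according to whether $f'$ also contains $u_1v_1$ or contains $u_1u_4$.
\begin{itemize}
\item If $f'$ contains $u_1v_1$, then $f'=u_1v_1v_2yu_2$ for some vertex $y$. This is the configuration of Figure~\ref{f1}, possibly without the extra $4$-faces drawn there.
\item If $f'$ contains $u_1u_4$, then $f'=u_1u_2abu_4$.
\end{itemize}
In each case I will use \Cref{l2}, \Cref{l7}, \Cref{l10} and \Cref{l11} to pin down degrees. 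For instance, $u_2\neq v_2$ by \Cref{l5}. Also, $u_2$ cannot be a $2$-vertex unless the constraints of \Cref{l2} and \Cref{l7} allow it. Finally, \Cref{l10} restricts how many $4$-cycles contain $u_1$ or $v_1$.

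\textbf{Step 3: recolor.} By \Cref{r1}, I greedily color $G^2$ except for a small set $S$. I start with $S=\{u_1,v_1\}$, which is a $3$-$3$ edge, and enlarge it if needed to $S=\{u_1,v_1,u_4,v_4\}$ or to $S$ plus vertices of $f'$. I then count the lists. The key point is that $f$ is a $4$-cycle, so $u_1$ and $v_1$ share second neighbors. This gives $|L(u_1)|,|L(v_1)|\ge 2$ or so, while $|L(u_4)|,|L(v_4)|$ are small.

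To finish, I apply the saving-a-color trick from the introduction. I pick two vertices of $S$, or of their second neighborhoods, that are at distance $3$ because of the $5$-face. For example, $u_4$ and $v_2$ should typically be at distance $3$ in the first case, and planarity plus $g(G)\ge4$ should forbid a shortcut. The saving works as in \Cref{l9}: since the sum of their lists exceeds the list of a common $G^2$-neighbor, coloring both removes only one color from that neighbor. After that, the remaining vertices can be colored greedily in a suitable order, ending with the vertex that has the largest surplus.

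\textbf{Main obstacle.} I expect the hardest part to be the counting in the first case when $u_2$, $v_2$ and $y$ are all $4$-vertices or have high-degree neighbors. There the $5$-face gives no shared second neighbors, so $|L(u_1)|$ and $|L(v_1)|$ can drop to $1$. I would first try uncoloring the whole path $u_4u_1v_1v_4$ and saving a color at $u_1$ or $v_1$ using the pair $(u_4,v_2)$ or $(v_4,u_2)$. If that fails, I would use \Cref{l11} to force a $3$-neighbor somewhere, which enlarges the lists.
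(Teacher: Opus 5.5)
\textbf{The configuration you analyze does not exist.} Your Step~1 already contains a contradiction that you did not notice. In $f=u_1v_1v_4u_4$ the edge $u_4v_4$ joins two $4$-vertices, which $4$-irregularity forbids. More generally, by Lemma~\ref{l8} and $4$-irregularity, the two $3$-vertices of any $4$-cycle must be opposite. So two \emph{adjacent} $3$-vertices never lie on a common $4$-face. Under your reading the lemma is vacuous, and everything from Step~2 on recolors a configuration that cannot occur.

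The statement the paper proves is a different one, and it is used right afterwards to conclude that weak $3(1)$-vertices are not adjacent. There, $u_1$ and $v_1$ are adjacent $3$-vertices that each lie on their \emph{own} $4$-face, $C=v_1v_2v_3v_4$ and $C'=u_1u_2u_3u_4$, with $d(u_2)=d(u_4)=d(v_2)=d(v_4)=4$ and $d(u_3)=d(v_3)=3$. Since $C$ occupies one of the three angles at $v_1$ (and $C'$ one at $u_1$), every other face at $v_1$ or $u_1$ contains the edge $u_1v_1$. So a $5$-face there is, up to symmetry, $v_1v_2yu_2u_1$. Moreover $d(y)=3$: $y$ is adjacent to two $4$-vertices, and it cannot be a $2$-vertex by Lemma~\ref{l7}. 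Your proposal never sets up this configuration.

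\textbf{Step 3 is only a plan.} Independently of the misreading, no list sizes are computed, the saving pair is only ``typically'' at distance $3$, and the obstacle you identify is left open. The idea that makes the real case work is to uncolor both $4$-faces together with $y$. The two $4$-cycles and the $5$-face make these vertices share many $G^2$-neighbors, so the lists are long; for example $|L(u_1)|,|L(v_1)|\ge 7$.

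The paper then splits on $d(v_2,u_3)$:
\begin{itemize}
\item If $d(v_2,u_3)=3$, it saves a color at $u_1$ by coloring $v_2$ and $u_3$.
\item If $d(v_2,u_3)=2$ (distance $1$ is excluded by Lemma~\ref{l9}), it also uncolors a common neighbor $x$, which is a $3$-vertex and possibly $x=v_3$. It then saves a color at $v_1$ with the pair $y,u_4$, which planarity places at distance $3$.
\end{itemize}
In both cases the remaining vertices are then colored greedily in a suitable order.
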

\begin{proof} Suppose, to the contrary, that $u_1$ and $v_1$ are incident to a $5$-face $f$. By Lemma \ref{l8}, we deduce that $u_1$ and $v_1$ are contained in two disjoint $4$-faces. Let $C=v_1v_2v_3v_4$ be the $4$-face incident to $v_1$ and $C'=u_1u_2u_3u_4$ be the $4$-face incident to $u_1$. So, we have $d(u_2)=d(u_4)=d(v_2)=d(v_4)=4$ and $d(u_3)=d(v_3)=3$. The $5$-face $f$ incident to $v_1$ is of the form $v_1v_2yu_2u_1$ where $y\in N(v_2)\cap N(u_2)$ or of the form $v_1u_1u_4yv_4$ where $y\in N(v_4)\cap N(u_4)$. Without loss of generality, suppose that $f=v_1v_2yu_2u_1$ where $y\in N(v_2)\cap N(u_2)$ (See \Cref{f1}). Since $G$ is $4$-irregular, we have $d(y)=3$. We will study two cases according to $d(v_2,u_3)$. 
\begin{itemize}
    \item \textbf{Case 1:} $d(v_2,u_3)\leq 2$.\\
        By Lemma \ref{l9}, we deduce that $d(u_3,v_2)=2$. Let $x\in N(u_3)\cap N(v_2)$. Since $g(G)\ge 4$ and $G$ is $4$-irregular, we deduce that $x\notin \{u_2,u_4,y\}$ and $d(x)=3$. 

        Suppose that $x\neq v_3$. So, $x\notin V(C)\cup V(C')$. Greedily color all the vertices in $G^2$ using 10 colors except $V(C')\cup V(C)\cup \{x,y\}$. We have $|L(v_1)|\ge 8$, $|L(v_2)|\ge 7$, $|L(v_3)|\ge 4$, $|L(v_4)|\ge 3$, $|L(u_1)|\ge 7$, $|L(u_2)|\ge 6$, $|L(u_3)|\ge 6$, $|L(u_4)|\ge 4$, $|L(x)|\ge 6$, and $|L(y)|\ge 5$. 
Since $G$ is planar with $g(G)\ge 4$, we deduce that $d(y,u_4)=3$. Then, since $|L(y)|+|L(u_4)|>8$, we can color $y$ by $c$ and $u_4$ by $c'$ such that $|L(v_1)\backslash \{c,c'\}|\ge 7$. Color $y$ by $c$ and $u_4$ by $c'$ and call the new list of available colors $L_1$. We have $|L_1(v_1)|\ge 7$, $|L_1(v_2)|\ge 6$, $|L_1(v_3)|\ge 3$, $|L_1(v_4)|\ge 3$, $|L_1(u_1)|\ge 5$, $|L_1(u_2)|\ge 4$, $|L_1(u_3)|\ge 4$, and $|L_1(x)|\ge 4$. Greedily color in order $v_3$, $v_4$, $x$, $u_3$, $u_2$, $v_2$, $u_1$, and $v_1$ in $G^2$ to obtain $\chi_2(G) \leq 10$, a contradiction.  

Suppose now that $x=v_3$. So, $v_3$ and $u_3$ are adjacent. Greedily color the vertices in $G^2$ using 10 colors except $V(C')\cup V(C)\cup \{y\}$. We have $|L(v_1)|\ge 7$, $|L(v_2)|\ge 6$, $|L(v_3)|\ge 7$, $|L(v_4)|\ge 4$, $|L(u_1)|\ge 7$, $|L(u_2)|\ge 6$, $|L(u_3)|\ge 7$, $|L(u_4)|\ge 4$, and $|L(y)|\ge 4$. Hence, since $G$ is planar with $g(G)\ge 4$, we deduce that $d(y,u_4)=3$. Then, since $|L(y)|+|L(u_4)|>7$, we can color $y$ by $c$ and $u_4$ by $c'$ such that $|L(v_1)\backslash \{c,c'\}|\ge 6$. Color $y$ by $c$ and $u_4$ by $c'$ and call the new list of available colors $L_1$. We have $|L_1(v_1)|\ge 6$, $|L_1(v_2)|\ge 5$, $|L_1(v_3)|\ge 5$, $|L_1(v_4)|\ge 4$, $|L_1(u_1)|\ge 5$, $|L_1(u_2)|\ge 4$, and $|L_1(u_3)|\ge 5$. Greedily color in order $v_4$, $v_3$, $u_3$, $u_2$, $v_2$, $u_1$, and $v_1$ in $G^2$ to obtain $\chi_2(G) \leq 10$, a contradiction. 
\item \textbf{Case 2:} $d(v_2,u_3)= 3$.
\\
Greedily color the vertices in $G^2$ using 10 colors except $V(C')\cup V(C)\cup \{y\}$. We have $|L(v_1)|\ge 7$, $|L(v_2)|\ge 5$, $|L(v_3)|\ge 3$, $|L(v_4)|\ge 3$, $|L(u_1)|\ge 7$, $|L(u_2)|\ge 5$, $|L(u_3)|\ge 3$, $|L(u_4)|\ge 3$, and $|L(y)|\ge 4$. 
 Since $|L(u_3)|+|L(v_2)|>7$ and $d(v_2,u_3)= 3$, we can color $v_2$ by $c$ and $u_3$ by $c'$ such that $|L(u_1)\backslash \{c,c'\}|\ge 6$. Color $y$ by $c$ and $u_4$ by $c'$ and call the new list of available colors $L_1$. 
 
 Then, we have $|L_1(v_1)|\ge 6$, $|L_1(v_3)|\ge 2$, $|L_1(v_4)|\ge 2$, $|L_1(u_1)|\ge 6$, $|L_1(u_2)|\ge 3$, $|L_1(u_4)|\ge 2$, and $|L_1(y)|\ge 2$. Greedily color in order $y$, $v_3$, $v_4$, $u_2$, $u_4$, $v_1$, and $u_1$ in $G^2$ to obtain $\chi_2(G) \leq 10$, a contradiction. \qedhere  
\end{itemize}
\end{proof}

Let $f$ be a $5$-face. We say $f$ is a \textit{bad $5$-face} if all the vertices incident to $f$ are of degree $3$. We say $f$ is a \textit{semi-bad $5$-face} if four of its incident vertices are of degree $3$. Otherwise, we say $f$ is a \textit{good $5$-face}. 

Let $v$ be a $3$-vertex. We say $v$ is a $3(1)$-vertex if $v$ is incident to a $4$-face. A $3(1)$-vertex is said to be a \textit{weak $3(1)$-vertex} if it is incident to a $5$-face. A $3(1)$-vertex is said to be a \textit{strong $3(1)$-vertex} if it is not incident to any $5$-face. By \Cref{l12}, we deduce that weak $3(1)$-vertices are not adjacent.
Suppose that $v$ is not incident to any $4$-face. We say $v$ is a \textit{bad $3$-vertex} if $v$ is incident to a bad $5$-face. Similarly, we say $v$ is a \textit{semi-bad $3$-vertex} if $v$ is incident to a semi-bad $5$-face. 
Otherwise, we say $v$ is a \textit{good $3$-vertex}.

We say $v$ is a \textit{bad $2$-vertex} if $v$ is a $2$-vertex incident to a $5$-face and a \textit{good $2$-vertex} otherwise.
\begin{lemma} \label{l13}
    A bad $2$-vertex is incident to exactly one $5$-face.
\end{lemma}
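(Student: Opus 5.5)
We argue by contradiction: suppose a $2$-vertex $v$ with neighbors $v_1,v_2$ is incident to two $5$-faces. By \Cref{l1}, $v$ is incident to exactly two faces $f_1$ and $f_2$, and both contain the path $v_1vv_2$. By \Cref{l2}, $d(v_1)=d(v_2)=4$. Write $f_1=v_1vv_2x_2x_1$ and $f_2=v_1vv_2y_2y_1$, with $x_1,y_1\in N(v_1)$ and $x_2,y_2\in N(v_2)$. Since $G$ is $4$-irregular, $x_1,x_2,y_1,y_2$ are $3^-$-vertices. By \Cref{l4}, none of them is a $2$-vertex, because each lies within distance $2$ of $v$. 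Hence all four have degree $3$.

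Next we check that these vertices are distinct and that the configuration is rigid. The set $\{x_1,y_1\}$ is disjoint from $\{x_2,y_2\}$: a common vertex $w$ would lie in $N(v_1)\cap N(v_2)$, giving the $4$-cycle $vv_1wv_2$, which \Cref{l6} forbids. Also $x_1\neq y_1$, because otherwise $v_1$ would have degree $2$ in the embedding, or $x_1x_2$ and $x_1y_2$ would force a $4$-cycle $v_1$-free structure contradicting \Cref{l6}. In the degenerate case the faces collapse, and we discard it in the same way using \Cref{l1} and \Cref{l5}. So $v_1$ has neighbors $v,x_1,y_1$ and one more, $v_2$ has neighbors $v,x_2,y_2$ and one more, and $x_1x_2$, $y_1y_2$ are edges joining adjacent $3$-vertices.

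For the colouring step, \Cref{r1} lets us delete the edge $vv_1$, or simply remove $v$, colour $G-v$, and then recolour only a few vertices. Concretely, we greedily colour everything except $v$, $x_1$ and $y_1$, then count list sizes. In $G^2$, $v$ has at most $2+3+3=8$ neighbors; $x_1$ and $y_1$ are among them, so $|L(v)|\ge 4$. The vertex $x_1$ has $G^2$-degree at most $3+(3+2+3)$, which counts $v$ and $y_1$ (both through $v_1$), so $|L(x_1)|\ge 2$, and likewise $|L(y_1)|\ge 2$. Since $x_1$ and $y_1$ are adjacent in $G^2$, we colour $x_1$, then $y_1$, then $v$, and this always succeeds. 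If this count is off by one, we can instead uncolour $v,x_1,x_2$. Then $|L(x_1)|\ge 3$ and $|L(x_2)|\ge 3$, since each has two uncoloured $G^2$-neighbors among these, and $|L(v)|\ge 5$. A triangle in $G^2$ with these list sizes is always colourable, which gives the contradiction $\chi_2(G)\le 10$.

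The main obstacle is justifying the distinctness claims cleanly, in particular ruling out $x_1=y_1$ and similar coincidences, together with the exact $G^2$-degree bookkeeping. Uncolouring the triangle $v,x_1,x_2$, in which each vertex keeps at least $3$ available colours, is the safest choice and tolerates small counting errors.
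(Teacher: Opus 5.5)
There is a genuine gap in the colouring step. Your list sizes for the $3$-vertices are too optimistic, and with the correct bounds neither of your uncolourings works.

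The vertex $x_1$ has three neighbours: $v_1$ (degree $4$), $x_2$ (degree $3$), and a third neighbour $w$ that may have degree $4$. So, as your own count $3+(3+2+3)$ shows, $x_1$ can have $11$ neighbours in $G^2$. With only two of them uncoloured you get $|L(x_1)|\ge 10-9=1$, not $2$ or $3$. The same holds for $x_2$ and $y_1$, and $|L(v)|\ge 10-6=4$ rather than $5$. Both $x_1y_1$ (through $v_1$) and $x_1x_2$ are edges of $G^2$. So in either triple, two adjacent vertices may have the same single available colour, and the greedy completion fails.

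This case cannot be set aside. If $d(w)=3$, uncolouring $v,v_1,x_1$ already gives a contradiction, exactly as in \Cref{l25}. So $d(w)=4$ is precisely the case the lemma has to handle.

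The triple $\{v,x_1,y_1\}$ has a second problem: \Cref{r1} does not justify it. It spans no edge, and after deleting $v$ or the edge $vv_1$, the vertices $v_1,v_2$ may receive the same colour, since by \Cref{l6} their only common neighbour is $v$. The triple $\{v,x_1,x_2\}$ is legitimate, by deleting the edge $x_1x_2$. Your distinctness discussion is muddled but repairable: by \Cref{l1} face boundaries are cycles, and $x_1=y_1$ would force $d(v_1)=2$.

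The missing idea is to uncolour a larger set and \emph{save a colour}. The paper uncolours all seven vertices of $f_1\cup f_2$, which is justified by deleting $vv_1$. This gives $|L(v)|\ge 8$, $|L(v_1)|,|L(v_2)|\ge 5$ and $|L(x_i)|,|L(y_i)|\ge 4$. Since $G$ is planar with $g(G)\ge 4$, paths of length at most $2$ joining $x_1,y_2$ and joining $x_2,y_1$ would have to cross outside the $6$-cycle $v_1x_1x_2v_2y_2y_1$. Hence one of these pairs, say $x_1,y_2$, is at distance $3$. As $|L(x_1)|+|L(y_2)|\ge 8>5$, you can colour $x_1,y_2$ so that $v_1$ loses only one colour. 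Then colour greedily in the order $x_2,y_1,v_2,v_1,v$.
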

\begin{proof}
    Suppose that there exists a $2$-vertex $v$ incident to two $5$-faces. Let $C=vv_1v_2v_3v_4$ and $C'=vv_4v_5v_6v_1$ be these two $5$-faces incident to $v$. So, we have $d(v_1)=d(v_4)=4$ by \Cref{l2}. Hence, since $G$ is $4$-irregular, $d(v_2)=d(v_3)=d(v_5)=d(v_6)=3$. Greedily color all the vertices in $G^2$ using 10 colors except $V(C)\cup V(C')$. Then, we have $|L(v)|
    \ge 8 $, $|L(v_1)|\ge 5$, $|L(v_2)|\ge 4 $, $|L(v_3)|\ge 4$,
     $|L(v_4)|\ge 5$, $|L(v_5)|\ge 4$, and $|L(v_6)|\ge 4$.
    Since $G$ is planar with $g(G)\ge 4$ , we deduce that either $d(v_2,v_5)=3$ or $d(v_3,v_6)=3$. Without loss of generality, suppose that $d(v_2,v_5)=3$. Since $|L(v_2)|+|L(v_5)|>5$, we can color $v_2$ by $c$ and $v_5$ by $c'$ such that $|L(v_1)\backslash \{c,c'\}|\ge 4$. Color $v_2$ by $c$ and $v_5$ by $c'$ and call the new list $L_1$. So, we have $|L_1(v)|
    \ge 6 $, $|L_1(v_1)|\ge 4$, $|L_1(v_3)|\ge 2$,
     $|L_1(v_4)|\ge 3$, and $|L_1(v_6)|\ge 2$. Greedily color in order $v_3$, $v_6$, $v_4$, $v_1$, and $v$ in $G^2$ to obtain $\chi_2(G)\leq 10$, a contradiction.
\end{proof}
\begin{lemma} \label{l14} A bad $2$-vertex is not incident to any $6$-face. 
\end{lemma}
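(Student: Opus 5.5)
Suppose, to the contrary, that a bad $2$-vertex $v$ is incident to a $6$-face. Since $v$ has exactly two incident faces (by \Cref{l1}) and, by \Cref{l13}, exactly one of them is a $5$-face, the other is the $6$-face. Write $C=vv_1v_2v_3v_4$ for the $5$-face and $C'=vv_4v_5v_6v_7v_1$ for the $6$-face.

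By \Cref{l2}, $d(v_1)=d(v_4)=4$. Since $G$ is $4$-irregular, $d(v_2)=d(v_3)=d(v_5)=d(v_7)=3$. By \Cref{l4}, none of $v_2,v_3,v_5,v_6,v_7$ is a $2$-vertex. So $d(v_6)\in\{3,4\}$, and the rest of the argument does not depend on which.

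Following the pattern of \Cref{l13}, I greedily color every vertex of $G^2$ except $V(C)\cup V(C')$ and count the lower bounds on the lists. Each uncolored vertex has at most $4+4=8$ neighbors in $G^2$ when it is a $3$-vertex next to a $4$-vertex, and fewer in most cases. I count the colored $G^2$-neighbors, using $g(G)\ge 4$ and \Cref{l6} (no $4$-cycle through $v$) to see that these vertices are pairwise distinct. This gives bounds roughly of the form
\begin{itemize}
\item $|L(v)|\ge 8$;
\item $|L(v_1)|,|L(v_4)|\ge 6$ (they see each other and several face vertices);
\item $|L(v_2)|,|L(v_3)|\ge 4$ and $|L(v_5)|,|L(v_7)|\ge 4$;
\item $|L(v_6)|\ge 3$ or more.
\end{itemize}

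Next I save a color at $v_1$ or $v_4$. By planarity, $g(G)\ge4$, and the fact that $C$ and $C'$ are faces, at least one of $d(v_2,v_5)$ or $d(v_3,v_7)$ equals $3$; the argument is the same as in \Cref{l9} and \Cref{l13}. Suppose $d(v_3,v_7)=3$. Then $|L(v_3)|+|L(v_7)|>|L(v_4)|$ or $>|L(v_1)|$ by the right margin, so I color $v_3,v_7$ while removing only one color from a chosen common neighbor. Both $v_1$ and $v_4$ are $G^2$-adjacent to $v_3$ and $v_7$, so I would save at the tighter of the two. If the margin is not enough, I would first also pair $v_2$ with $v_6$ or $v_5$ with a vertex at distance $3$.

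After that I color greedily, in an order that leaves the high-list vertices $v_1,v_4,v$ last: for instance $v_2$, $v_5$, $v_6$, then $v_4$, $v_1$, and finally $v$. Here $v$ has $8$ colors and only $6$ uncolored $G^2$-neighbors among the face vertices.

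The main obstacle is the bookkeeping at the $3$-vertices $v_5,v_6,v_7$ on the $6$-face. Their lists are small, and possible extra adjacencies (such as $v_2$ adjacent to $v_6$'s neighbors, or $d(v_2,v_7)=2$ through an outside vertex) change the counts. I would handle this by a short case split on whether some pair at distance $3$ exists. In the degenerate case where several face vertices are within distance $2$, the coincidences themselves enlarge the lists, because shared $G^2$-neighbors are counted once. I expect this to make the greedy order work in every case, as it does in Case 1 of \Cref{l12}.
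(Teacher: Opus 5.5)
Your skeleton is the paper's: color everything outside $V(C)\cup V(C')$, use that $d(v_2,v_5)=3$ or $d(v_3,v_7)=3$, save a color at a $4$-vertex, and finish greedily. But the extension step, which is the whole content of this lemma, is never carried out, and the bookkeeping you do give is wrong.

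A $3$-vertex can have up to $12$ neighbours in $G^2$, not $8$. The correct worst-case bounds are $|L(v)|\ge 8$, $|L(v_1)|,|L(v_4)|\ge 5$, $|L(v_2)|,|L(v_3)|\ge 4$, $|L(v_5)|,|L(v_7)|\ge 3$ and $|L(v_6)|\ge 2$. The last bound occurs exactly when $d(v_6)=4$ (for $d(v_6)=3$ one gets $4$), so the value of $d(v_6)$ does matter. Moreover, $v_4$ is in general at distance $3$ from $v_7$, since both $v_4vv_1v_7$ and $v_4v_5v_6v_7$ have length $3$. So for the pair $\{v_3,v_7\}$ the only $4$-vertex at which you can save is $v_1$; there $4+3>5$ suffices.

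With the correct bounds, the step you describe fails. Suppose you color both $v_3$ and $v_7$, one with a color outside $L(v_1)$ and the other arbitrarily. Then $v_5$, which sees both through $v_4$ and $v_6$, keeps only $\ge 1$ color, and $v_6$, which sees $v_7$, also keeps only $\ge 1$ color. Since $v_5v_6$ is an edge, they may be left with the same single color. Even when $v_3$ and $v_7$ share a color, your order $v_2,v_5,v_6,\dots$ breaks, because $v_6$ is left with a single color that $v_5$ may take.

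The missing ingredient is the case split the paper uses (there for the symmetric pair $v_2,v_5$, saving at $v_4$):
\begin{itemize}
\item If $L(v_3)\cap L(v_7)\neq\emptyset$, give both the common color.
\item Otherwise $|L(v_3)\cup L(v_7)|\ge 7$. After shrinking $L(v_1)$ to $5$ colors, some color of one of the two lists lies outside it. Color only that vertex, and leave the other in the greedy sequence.
\end{itemize}
In every case $v_6$ must be colored first:
\begin{itemize}
\item after a common color, use $v_6,v_5,v_2,v_4,v_1,v$;
\item if only $v_3$ was colored, use $v_6,v_5,v_7,v_2,v_4,v_1,v$;
\item if only $v_7$ was colored, use $v_6,v_5,v_2,v_3,v_4,v_1,v$.
\end{itemize}
Each of these orders checks out against the bounds above.
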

\begin{proof} 
Suppose that there exists a  bad $2$-vertex $v$ incident to a $6$-face. Let $C=vv_1v_2v_3v_4$ be the $5$-face incident to $v$ and $C'=vv_4v_5v_6v_7v_1$ be the $6$-face incident to $v$.  So, we have $d(v_1)=d(v_4)=4$ and $d(v_2)=d(v_3)=d(v_5)=d(v_7)=3$. Greedily color all the vertices in $G^2$ using 10 colors except $V(C)\cup V(C')$. Then, we have $|L(v)|
    \ge 8 $, $|L(v_1)|\ge 5$, $|L(v_2)|\ge 4 $, $|L(v_3)|\ge 4$,
     $|L(v_4)|\ge 5$, $|L(v_5)|\ge 3$, $|L(v_6)|\ge 2$, and $|L(v_7)|\ge 3$. Note that $v_1$ and $v_4$ are not contained in a $4$-cycle by Lemma \ref{l7}. Therefore, since $G$ is planar with $g(G)\ge 4$, we deduce that either $d(v_2,v_5)=3$ or $d(v_3,v_7)=3$. Without loss of generality, suppose that $d(v_2,v_5)=3$. 
    \begin{itemize}
        \item \textbf{Case 1:} $L(v_2)\cap L(v_5)\neq \emptyset$. \\
        Color $v_2$ and $v_5$ by $c\in L(v_2)\cap L(v_5)$
        and call the new list $L_1$. So, we have $|L_1(v)|
    \ge 7 $, $|L_1(v_1)|\ge 4$, $|L_1(v_3)|\ge 3$,
     $|L_1(v_4)|\ge 4$, $|L(v_6)|\ge 1$, and $|L_1(v_7)|\ge 2$. Greedily color in order $v_6$, $v_7$, $v_3$, $v_1$, $v_4$, and $v$ in $G^2$ to obtain $\chi_2(G)\leq 10$, a contradiction.
     \item \textbf{Case 2:} $L(v_2)\cap L(v_5)= \emptyset$. \\
     Then, $|L(v_2)\cup L(v_5)|>5$. Hence, there exists $c\in L(v_2)\backslash L(v_4)$ or $c\in L(v_5)\backslash L(v_4)$.
     \begin{itemize}
         \item \textbf{Subcase 2.1:} There exists $c\in L(v_2)\backslash L(v_4)$. \\
         Color $v_2$ by $c$ and call the new list $L_1$. Hence, $|L_1(v)|
    \ge 7 $, $|L_1(v_1)|\ge 4$, $|L_1(v_3)|\ge 3$,
     $|L_1(v_4)|\ge 5$, $|L(v_5)|\ge 3$, $|L(v_6)|\ge 2$ and $|L_1(v_7)|\ge 2$. Greedily color in order $v_6$, $v_7$, $v_5$, $v_3$, $v_1$, $v_4$, and $v$ in $G^2$ to obtain $\chi_2(G)\leq 10$, a contradiction.
     \item \textbf{Subcase 2.2:} There exists $c\in L(v_5)\backslash L(v_4)$. \\
         Color $v_5$ by $c$ and call the new list $L_1$. Hence, $|L_1(v)|
    \ge 7 $, $|L_1(v_1)|\ge 5$, $|L(v_2)|\ge 4$, $|L_1(v_3)|\ge 3$,
     $|L_1(v_4)|\ge 5$, $|L(v_6)|\ge 1$ and $|L_1(v_7)|\ge 2$. Greedily color in order $v_6$, $v_7$, $v_3$, $v_2$, $v_1$, $v_4$, and $v$ in $G^2$ to obtain $\chi_2(G)\leq 10$, a contradiction. \qedhere 
     \end{itemize}
    \end{itemize}
\end{proof}
\begin{lemma} \label{l15}
    A bad $2$-vertex is not incident to any $7$-face.
\end{lemma}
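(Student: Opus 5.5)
We argue by contradiction, following the scheme of Lemmas \ref{l13} and \ref{l14}. Suppose a bad $2$-vertex $v$ is incident to a $7$-face. Let $C=vv_1v_2v_3v_4$ be its (unique, by \Cref{l13}) $5$-face and $C'=vv_4v_5v_6v_7v_8v_1$ the $7$-face. By \Cref{l2}, $d(v_1)=d(v_4)=4$. Since $G$ is $4$-irregular, $d(v_2)=d(v_3)=d(v_5)=d(v_8)=3$, while $d(v_6),d(v_7)\le 4$ with not both equal to $4$. By \Cref{l4}, neither $v_6$ nor $v_7$ is a $2$-vertex, since each is at distance at most $3$ from $v$.

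By \Cref{r1} we color $G^2$ greedily except $V(C)\cup V(C')$ and estimate the lists. Counting colored neighbours in $G^2$ gives roughly $|L(v)|\ge 8$, $|L(v_1)|,|L(v_4)|\ge 5$, $|L(v_2)|,|L(v_3)|\ge 4$, and $|L(v_5)|,|L(v_8)|\ge 3$. For $v_6,v_7$ the bound is weaker, about $|L|\ge 1$ or $2$ depending on their degrees. The worst case is when one of them is a $4$-vertex, say $d(v_6)=4$: then $|L(v_6)|\ge 1$ and $|L(v_7)|\ge 2$, using that $v_7$ is then a $3$-vertex. By \Cref{l7}, $v_1$ and $v_4$ lie on no $4$-cycle. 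Planarity and $g(G)\ge 4$ then force $d(v_2,v_5)=3$ or $d(v_3,v_8)=3$, exactly as in \Cref{l14}; by symmetry assume $d(v_2,v_5)=3$.

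Next we save a color at $v_4$, a common $G^2$-neighbour of $v_2$ and $v_5$, splitting as in \Cref{l14}.
\begin{itemize}
\item If $L(v_2)\cap L(v_5)\neq\emptyset$, color both with a common color.
\item Otherwise $|L(v_2)\cup L(v_5)|\ge 7>|L(v_4)|$, so some $c\in L(v_2)\cup L(v_5)$ lies outside $L(v_4)$; color that vertex by $c$ and leave the other for later.
\end{itemize}
Then color in the order $v_6,v_7,v_8$, the remaining one of $v_2$ or $v_5$, then $v_3,v_1,v_4$, and finally $v$. This works because each vertex, when its turn comes, has more available colors than already-colored $G^2$-neighbours among $V(C)\cup V(C')$. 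Moreover $v$ sees only $v_1,v_2,v_3,v_4,v_5,v_8$ among these, so $|L(v)|\ge 8$ is ample.

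The main obstacle is the start of the ordering, $v_6$ and $v_7$, whose lists may have size $1$ or $2$. They must be colored first, and we must check that $v_8$ and $v_5$ still retain enough colors afterwards. $v_8$ loses colors to $v_7$ and $v_6$; it started with at least $3$ colors, and it should keep at least $1$ because its list bound already counted the uncolored $v_7$ and $v_1$ generously. If the count is tight, we first color $v_6,v_7$, then perform the save on the pair $(v_3,v_8)$ or $(v_2,v_5)$. If needed, we also use $d(v_6),d(v_7)$ to gain an extra color: when $d(v_7)=3$ and $d(v_6)\le 3$, both lists increase by one. Finally, if $d(v_6)=d(v_7)=3$, a sharper count gives $|L(v_6)|,|L(v_7)|\ge 2$ and the same ordering goes through.
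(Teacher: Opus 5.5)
Your overall scheme is the right one: colour everything outside $V(C)\cup V(C')$, save one colour, then finish greedily. But the argument as written does not close.

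\textbf{The two normalisations conflict.} The only symmetry of the configuration is the reflection $v_1\leftrightarrow v_4$, $v_2\leftrightarrow v_3$, $v_5\leftrightarrow v_8$, $v_6\leftrightarrow v_7$. You spend it on $d(v_6)=4$, so you cannot then also assume $d(v_2,v_5)=3$. In the leftover case $d(v_6)=4$, $d(v_2,v_5)\le 2$ (hence $d(v_3,v_8)=3$), you must save on $(v_3,v_8)$ at $v_1$, and the mirrored procedure breaks.
\begin{itemize}
\item If $v_3,v_8$ receive a common colour, $v_6$ is left with a single colour. Your mirrored order colours $v_7$ before $v_6$, which can kill it.
\item Ignoring further adjacencies, no greedy order finishes after this save. $v_6$ must be coloured while $v_4,v_5,v_7$ are all uncoloured. $v_5$ must be coloured while three of $v,v_4,v_6,v_7$ are uncoloured. $v_7$ must be coloured while two of $v_1,v_5,v_6$ are uncoloured. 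These requirements are incompatible.
\item The case is rescued only by the extra $G^2$-edge $v_2v_5$ coming from $d(v_2,v_5)\le 2$, e.g.\ with the order $v_6,v_7,v_5,v_4,v_2,v_1,v$. The two non-common-colour subcases also need separate checks.
\end{itemize}
None of this is in your proposal.

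\textbf{The list bounds are off by one where it matters.} When $d(v_6)=4$ one has $|L(v_6)|\ge 2$, $|L(v_7)|\ge 3$ and $|L(v_8)|\ge 4$. With the values you state ($1,2,3$), your own order already fails in the common-colour subcase, since $v_6$ may lose its only colour to $v_5$. With the correct values it does go through in the case you treat.

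\textbf{The last paragraph is not a proof.} It lists possible patches (``should keep at least $1$'', ``if the count is tight'', ``if needed'') instead of verifying one.

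\textbf{How the paper avoids this.} The paper does not use the distance-$3$ pair or the planarity argument at all. It makes the single normalisation $d(v_7)=3$ (in your labels). It colours the $4$-vertex $v_1$ first with a colour outside $L(v_2)$, which is possible since $|L(v_1)|\ge 5>4$, so $v_2$ loses nothing. It then greedily colours $v_7,v_6,v_8,v_5,v_4,v_3,v_2,v$.
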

\begin{proof}
    Suppose that there exists a bad $2$-vertex incident to a 7-face $f$. Set $f=vv_1v_2v_3v_4v_5v_6$. Let $f'=vv_6v_7v_8v_1$ be the $5$-face incident to $v$. Then, $d(v_1)=d(v_6)=4$. So, $d(v_2)=d(v_5)=d(v_7)=d(v_8)=3$. Moreover, since $G$ is $4$-irregular, $d(v_3)=3$ or $d(v_4)=3$. Without loss of generality, suppose that $d(v_4)=3$.
    Greedily color all the vertices in $G^2$ using 10 colors except the vertices incident to $f$ and $f'$. 

    Then, we have $|L(v)|
    \ge 8$, $|L(v_1)|\ge 5$, $|L(v_2)|\ge 3$, $|L(v_3)|\ge 2$,
     $|L(v_4)|\ge 3$, $|L(v_5)|\ge 4$, $|L(v_6)|\ge 5$, $|L(v_7)|\ge 4$, and $|L(v_8)|\ge 4$. Since $|L(v_6)|>4$, we can color $v_6$ by $c$ such that $|L(v_7)\backslash \{c\}|\ge 4$. Color $v_6$ by $c$ and call the new list of available colors $L_1$.

     We have $|L_1(v)|
    \ge 7$, $|L_1(v_1)|\ge 4$, $|L_1(v_2)|\ge 3$, $|L_1(v_3)|\ge 2$,
     $|L_1(v_4)|\ge 2$, $|L_1(v_5)|\ge 3$, $|L_1(v_7)|\ge 4$, and $|L_1(v_8)|\ge 3$. Greedily color in order $v_4$, $v_3$, $v_5$, $v_2$, $v_1$, $v_8$, $v_7$, and $v$ in $G^2$ to obtain that $\chi_2(G)\leq 10$, a contradiction.
\end{proof}
We deduce that a weak $2$-vertex is incident to an $8^+$-face.
\begin{lemma} \label{l16} Let $f=v_1v_2v_3v_4v_5$ be a semi-bad $5$-face such that $d(v_5)=4$. Then, the third neighbors of $v_1$ and $v_4$ are of degree $4$.
\end{lemma}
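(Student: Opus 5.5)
By symmetry (the face $f$ has a reflection fixing $v_5$ and swapping $v_1\leftrightarrow v_4$, $v_2\leftrightarrow v_3$), it suffices to show that the third neighbor of $v_1$ has degree $4$. Since $f$ is semi-bad and $d(v_5)=4$, we have $d(v_1)=d(v_2)=d(v_3)=d(v_4)=3$. The neighbors of $v_1$ are $v_2$, $v_5$ and a third vertex $w\notin V(f)$; this follows from $g(G)\ge 4$ (\Cref{l5}) and because $f$ is a face. Suppose, to the contrary, that $d(w)\le 3$. By \Cref{l2}, $d(w)\neq 2$, since $w$ is adjacent to the $3$-vertex $v_1$. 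Hence $d(w)=3$.

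The plan follows the pattern of \Cref{l11}: delete one edge and recolor its two ends, counting second neighbors carefully. By \Cref{r1}, greedily color all vertices in $G^2$ except $v_1$ and $v_2$. Count the colored $G^2$-neighbors of each.
\begin{itemize}
\item For $v_1$: the neighbors $w$ and $v_5$, the two other neighbors of $w$, the three other neighbors of $v_5$ (one of which is $v_4$), and the other neighbor $v_3$ of $v_2$. That gives at most $2+2+3+1=8$, so $|L(v_1)|\ge 2$.
\item For $v_2$: the neighbor $v_3$, the two other neighbors of $v_3$ (including $v_4$), $v_5$ and $w$ via $v_1$. That gives at most $5$, so $|L(v_2)|\ge 5$.
\end{itemize}
Coloring $v_1$ first and then $v_2$ extends the coloring, a contradiction.

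Some care is needed with overlaps. In the count for $v_1$, $v_4$ is counted once among the neighbors of $v_5$, and coincidences among these vertices only lower the count, so the bound $|L(v_1)|\ge 2$ is safe. The only thing to be sure of is that each vertex we rely on lies at distance at most $2$ through the stated paths, which is immediate.

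If the count for $v_1$ turns out to be too tight (it is not, since we have margin $2$), the fallback is to leave $v_1,v_2,v_3,v_4$ all uncolored and color them in the order $v_1,v_4,v_2,v_3$. The main point to verify is the list bound at $v_1$, which uses $d(w)=3$ essentially: if $d(w)=4$, the count for $v_1$ would be $9$ and $|L(v_1)|\ge 1$ would still hold, but $v_2$ would then need to be checked as well. That is why the degree of $w$ is what drives the statement. Since $d(w)=3$ gives us slack, the argument closes directly.
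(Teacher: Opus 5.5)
\paragraph{There is a genuine gap: both of your list counts omit the third neighbor of $v_2$.} You treat $v_2$ as if its only neighbors were $v_1$ and $v_3$, but $d(v_2)=3$, so $v_2$ also has a neighbor $u_2$.

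With $u_2$ included, the colored $G^2$-neighbors of $v_1$ are:
\begin{itemize}
\item $v_5$ and $w$;
\item the two other neighbors of $w$;
\item the three other neighbors of $v_5$;
\item $v_3$ and $u_2$.
\end{itemize}
That is up to $9$, so you only get $|L(v_1)|\ge 1$. For $v_2$ you must add $u_2$ and its other neighbors:
\begin{itemize}
\item $v_3$ and $u_2$;
\item $v_5$ and $w$;
\item the two other neighbors of $v_3$;
\item up to three other neighbors of $u_2$.
\end{itemize}
Again this is up to $9$, so only $|L(v_2)|\ge 1$.

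You cannot escape by hoping $d(u_2)=3$. The vertices $v_1$ and $v_2$ are adjacent $3$-vertices, each with two $3$-neighbors ($v_2,w$ and $v_1,v_3$), so \Cref{l11} forces $d(u_2)=4$, which is exactly the worst case. Two adjacent vertices whose lists may be the same singleton cannot be colored, so deleting one edge and recoloring its ends does not reduce this configuration.

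\paragraph{Your fallback fails too.} The vertices $v_1,v_2,v_3,v_4$ are pairwise within distance $2$, so they form a $K_4$ in $G^2$. The correct bounds are $|L(v_1)|,|L(v_2)|,|L(v_3)|\ge 3$ and $|L(v_4)|\ge 2$. The bound for $v_4$ is only $2$ because $v_4$ can have $11$ vertices in its $G^2$-neighborhood when its third neighbor has degree $4$. In your order $v_1,v_4,v_2,v_3$, the last vertex $v_3$ has $3-3=0$ guaranteed colors. No other order does better, since all four lists could lie inside a common $3$-set.

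\paragraph{What is missing is a way to create slack at $v_1$ by freeing more vertices and saving a color.} The paper uncolors all of $V(f)\cup\{w\}$. This gives $|L(v_1)|,|L(v_2)|\ge 5$ while the $4$-vertex $v_5$ keeps $|L(v_5)|\ge 3$. It then colors two $G^2$-neighbors of $v_1$ that are at distance $3$ from each other, so that $v_1$ loses only one color. Finding such a pair requires a case analysis on $d(w,v_3)$:
\begin{itemize}
\item If $d(w,v_3)=3$, the pair $w,v_3$ is used directly.
\item If $d(w,v_3)=2$, a common neighbor $x$ is also uncolored. Then, depending on $d(w,v_4)$, either the pair $w,v_4$ is used, or (when $w$ and $v_4$ have a common neighbor $y$) the pair $v_2,y$ is used.
\end{itemize}
Some argument of this kind is needed; a single-edge recoloring is not enough here.
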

\begin{proof} 
We will show that the third neighbor of $v_1$ is of degree $4$. Similarly, we can show that the third neighbor of $v_4$ is of degree $4$.
Suppose, to the contrary, that the third neighbor $u_1$ of $v_1$ is of degree $3$. Since $f$ is semi-bad, we have $d(v_1)=d(v_2)=d(v_3)=d(v_4)=3$. We will study two cases according to $d(u_1,v_3)$.
\begin{itemize}
    \item \textbf{Case 1:} $d(u_1,v_3)=3$.
    \\
    Greedily color all the vertices in $G^2$ using 10 colors except the vertices incident to $f$ and $u_1$.
    So, we have $|L(v_1)|\ge 5$, $|L(v_2)|\ge 5$, $|L(v_3)|\ge 4$, $|L(v_4)|\ge 3$, $|L(v_5)|\ge 3$, and $|L(u_1)|\ge 2$.
    Since $d(u_1,v_3)=3$ and $|L(u_1)|+|L(v_3)|>5$, we can color $u_1$ by $c$ and $v_3$ by $c'$ such that $|L(v_1)\backslash \{c,c'\}|\ge 4$. Color $u_1$ by $c$ and $v_3$ by $c'$ and call the new list of available colors $L_1$.
    
    So, we have $|L_1(v_1)|\ge 4$, $|L_1(v_2)|\ge 3$,
     $|L_1(v_4)|\ge 2$, and $|L_1(v_5)|\ge 1$. Greedily color in order $v_5$, $v_4$, $v_2$, and $v_1$ in $G^2$ to obtain $\chi_2(G)\leq 10$, a contradiction.
     \item \textbf{Case 2:} $d(u_1,v_3)\leq 2$.
     \\
     Since $d(u_1)=3$, we deduce by Lemma \ref{l11} that $d(u_1,v_3)=2$. Let $x\in N(u_1)\cap N(v_3)$. By Lemma \ref{l8}, we deduce that $x\neq v_4$. Moreover, since $g(G)\ge 4$, we deduce that $x$ is not incident to $f$.
     \begin{itemize}
         \item \textbf{Subcase 2.1:} $d(u_1,v_4)=3$.
         \\
         Greedily color all the vertices in $G^2$ using 10 colors except the vertices incident to $f$, $u_1$, and $x$. So, we have $|L(v_1)|\ge 6$, $|L(v_2)|\ge 6$, $|L(v_3)|\ge 6$, $|L(v_4)|\ge 4$, $|L(v_5)|\ge 3$, $|L(u_1)|\ge 4$, and $|L(x)|\ge 3$. 
         Since $d(u_1,v_4)=3$ and $|L(u_1)|+|L(v_4)|>6$, we can color $u_1$ by $c$ and $v_4$ by $c'$ such that $|L(v_1)\backslash \{c,c'\}|\ge 5$. Color $u_1$ by $c$ and $v_4$ by $c'$ and call the new list of available colors $L_1$.
         
         So, we have $|L_1(v_1)|\ge 5$, $|L_1(v_2)|\ge 4$, $|L_1(v_3)|\ge 4$, $|L_1(v_5)|\ge 1$, and $|L_1(x)|\ge 1$. Greedily color in order $x$, $v_5$, $v_3$, $v_2$, and $v_1$ in $G^2$ to obtain $\chi_2(G) \leq 10$, a contradiction.
         \item \textbf{Subcase 2.2:} $d(u_1,v_4)\leq 2$. \\
          By Lemma \ref{l8}, we deduce that $d(u_1,v_4)=2$. Let $y \in N(u_1)\cap N(v_4)$. Since $G$ is planar with $g(G)\ge 4$, we deduce that $y$ is not incident to $f$. Greedily color all the vertices in $G^2$ using 10 colors except the vertices incident to $f$, $u_1$, $x$, and $y$. So, we have $|L(v_1)|\ge 7$, $|L(v_2)|\ge 6$, $|L(v_3)|\ge 7$, $|L(v_4)|\ge 6$, $|L(v_5)|\ge 4$, $|L(u_1)|\ge 6$, $|L(x)|\ge 4$, and $|L(y)|\ge 4$.
          Since $G$ is planar with $g(G)\ge 4$, we deduce that $d(y,v_2)=3$. Since $|L(v_2)|+|L(y)|>7$, we can color $v_2$ by $c$ and $y$ by $c'$ such that $|L(v_1)\backslash \{c,c'\}|\ge 6$. Color $v_2$ by $c$ and $y$ by $c'$ and call the new list of available colors $L_1$. 
          
          So, we have $|L_1(v_1)|\ge 6$, $|L_1(v_3)|\ge 5$, $|L_1(v_4)|\ge 4$, $|L_1(v_5)|\ge 2$, $|L_1(u_1)|\ge 4$, and $|L_1(x)|\ge 2$. Greedily color in order $x$, $v_5$, $u_1$, $v_4$, $v_3$, and $v_1$ in $G^2$ to obtain $\chi_2(G) \leq 10$, a contradiction. \qedhere
     \end{itemize}
\end{itemize}
\end{proof}

\begin{lemma} \label{l17}
    Let $v$ be a bad $3$-vertex. Then, $v$ is incident to two $6^+$-faces.
\end{lemma}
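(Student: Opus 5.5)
Throughout, let $v$ be a bad $3$-vertex and let $f=vv_1v_2v_3v_4$ be a bad $5$-face incident to $v$, so $d(v)=d(v_1)=d(v_2)=d(v_3)=d(v_4)=3$. Since $v$ is not a $3(1)$-vertex and $g(G)\ge 4$ by \Cref{l5}, the other two faces at $v$ are $5^+$-faces. Let $u$ be the third neighbor of $v$. The statement fails exactly when one of these two faces is a $5$-face $f'$, and I would argue by contradiction from that assumption. By symmetry I take $f'$ to contain the path $v_1vu$, say $f'=vv_1abu$.

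The first step is to pin down degrees using \Cref{l11}. The adjacent $3$-vertices $v$ and $v_1$ each have two neighbors of degree $3$: for $v$ these are $v_1,v_4$, and for $v_1$ they are $v,v_2$. Hence \Cref{l11} gives $d(u)=d(a)=4$. By $4$-irregularity $b$ cannot be a $4$-vertex, since it is adjacent to $a$. So $d(b)\in\{2,3\}$; if $d(b)=2$, then $b$ is a $2$-vertex whose neighbors $a,u$ are both $4$-vertices, which is allowed by \Cref{l2}. Applying \Cref{l11} symmetrically to the pairs $(v_1,v_2)$, $(v_2,v_3)$, $(v_3,v_4)$, $(v_4,v)$ along $f$, the third neighbor of every vertex of $f$ is a $4$-vertex. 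This should sharpen the list-size counts below.

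The main step is the recoloring. By \Cref{r1}, I greedily color all vertices of $G^2$ except $V(f)\cup\{a,b,u\}$, and then bound the lists. For example, the $G^2$-neighborhood of $v$ is contained in $\{v_1,v_4,u,v_2,a,v_3,x_4\}\cup (N(u)\setminus\{v\})$, where $x_4$ is the third neighbor of $v_4$. Since most of these vertices are uncolored, $|L(v)|\ge 8$. Similarly I expect roughly $|L(v_1)|\ge 7$, with $|L(v_2)|$, $|L(v_4)|$ around $5$–$6$, $|L(v_3)|\ge 4$, $|L(u)|,|L(a)|\ge 3$–$4$, and $|L(b)|$ at least $3$ (larger if $d(b)=2$).

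Next I use planarity and $g(G)\ge 4$ to find a pair at distance $3$ that shares a common uncolored $G^2$-neighbor with a large list. Natural candidates are $v_3$ and $a$, or $v_4$ and $a$, or $v_2$ and $u$; at least one such pair must be at distance $3$, otherwise a short cycle or a $4$-cycle forbidden by \Cref{l8} would appear. I then save a color at $v$ or $v_1$ as in \Cref{l13} and \Cref{l16}, finishing by greedily coloring in the order: smallest-list vertices first ($b$, the remaining $4$-vertex, $v_3$), then $v_2,v_4$, and lastly $v_1$ and $v$.

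I expect the main obstacle to be the case analysis when the chosen pair is at distance $2$. This would require an extra common neighbor $x$, as in Case 2 of \Cref{l16}. I would handle it the same way: include $x$ among the uncolored vertices, which raises every list by one, and use a different pair, which planarity then forces to be at distance $3$. A secondary subtlety is the case $d(b)=2$. There the list of $b$ is large but $a$ and $u$ lose fewer colored neighbors, so I would color $b$ last in that subcase.
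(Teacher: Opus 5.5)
Your framework is the paper's: the same eight uncolored vertices, \Cref{l11} to get $d(u)=d(a)=4$, and a color saved at $v$ through the pair $(v_4,a)$ or $(v_2,u)$. Your remark that $d(b)=2$ is not excluded is fair; the paper just asserts degree $3$ there, which is harmless since degree $2$ only enlarges the relevant lists. The gap is in the decisive coloring step. You only sketch it, and your list bounds are too optimistic exactly where it matters. In $N_{G^2}(v)=\{v_1,v_4,u,v_2,a,v_3,x_4,b\}\cup(N(u)\setminus\{v,b\})$ three vertices are colored, so $|L(v)|\ge 7$, not $8$. If the third neighbor of $b$ is a $4$-vertex, $b$ has eight colored $G^2$-neighbors, so only $|L(b)|\ge 2$. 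The other bounds are $|L(u)|,|L(a)|\ge 3$ and $|L(v_2)|,|L(v_4)|\ge 5$. With these bounds, the save you describe (coloring both ends of the pair as in \Cref{l13} and \Cref{l16}) can fail. If $v_4$ and $a$ get distinct colors, $u$ loses two colors (it sees $v_4$ through $v$ and $a$ through $b$) and $b$ loses one. The adjacent vertices $u$ and $b$ may then be left with the same single color, so the greedy order starting with $b$ breaks. The pair $(v_2,u)$ fails the same way at $a$ and $b$. Saving at $v_1$ instead does not help, because these two pairs are also the only usable ones there.

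The missing idea is the paper's case split. If $L(v_4)\cap L(a)\neq\emptyset$, give both vertices the common color. This saves simultaneously at $v$, $v_1$, $v_2$ and $u$, and the order $b,u,v_3,v_2,v_1,v$ finishes. Otherwise $|L(v_4)\cup L(a)|\ge 8$, so (with $L(v)$ trimmed to $7$ colors) one of $v_4,a$ has a color outside $L(v)$. Color \emph{only} that vertex and leave the other for the greedy phase. Use the order $u,b,a,v_3,v_2,v_1,v$ if you colored $v_4$, and $b,u,v_2,v_3,v_4,v_1,v$ if you colored $a$. On the other hand, the obstacle you anticipated never occurs. A common neighbor of $v_4$ and $a$ would be $v_3$, making $v_1v_2v_3a$ a $4$-cycle with three $3$-vertices, contrary to \Cref{l8}. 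Or it would be $x_4$, which gives two adjacent $4$-vertices, or, if $x_4=a$, the $4$-cycle $vv_1av_4$. So $d(v_4,a)=3$ always, likewise $d(v_2,u)=3$, and no auxiliary vertex $x$ is needed.
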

\begin{proof}
 Suppose, to the contrary, that $v$ is incident to two  $5^-$-faces $f$ and $f'$ such that $f'$ is the bad $5$-face. By Lemma \ref{l8}, we deduce that $f$ is a $5$-face. Let $f'=vv_1v_2v_3v_4$ be the bad $5$-face incident to $v$ and $f=vv_4v_5v_6v_7$ be the other $5$-face incident to $v$.  Since $f'$ is a bad $5$-face, $d(v_1)=d(v_2)=d(v_3)=d(v_4)=3$. By \Cref{l11}, we deduce that $d(v_5)=d(v_7)=4$. So, $d(v_6)=3$.
Greedily color all the vertices in $G^2$ using 10 colors except the vertices incident to $f$ and $f'$. Then, we have $|L(v)|\ge 7$, $|L(v_1)|\ge 5 $, $|L(v_2)|\ge 4$, $|L(v_3)|\ge 5$, $|L(v_4)|\ge 7$, $|L(v_5)|\ge 3$, $|L(v_6)|\ge 2$, and $|L(v_7)|\ge 3$. Since $G$ is planar, we deduce that either $d(v_1,v_5)=3$ or $d(v_3,v_7)=3$. Without loss of generality, suppose that $d(v_1,v_5)=3$.
\begin{itemize}
    \item \textbf{Case 1:} $L(v_1)\cap L(v_5)\neq \emptyset$. \\
    Color $v_1$ and $v_5$ by $c\in L(v_1)\cap L(v_5)$ and call the new list of available colors $L_1$. So, $|L_1(v)|\ge 6$, $|L_1(v_2)|\ge 3$, $|L_1(v_3)|\ge 4$, $|L_1(v_4)|\ge 6$, $|L_1(v_6)|\ge 1$, and $|L_1(v_7)|\ge 2$. Greedily color in order $v_6$, $v_7$, $v_2$, $v_3$, $v_4$, and $v$ in $G^2$ to obtain $\chi_2(G) \leq 10$, a contradiction. 
    \item \textbf{Case 2:} $L(v_1)\cap L(v_5)=\emptyset$.
    \\
    Then, $|L(v_1)\cup L(v_5)|\ge 8$. So, there exists $c\in L(v_1)\backslash L(v)$ or $c\in L(v_5)\backslash L(v)$.
    \begin{itemize}
        \item \textbf{Subcase 2.1:} There exists $c\in L(v_1)\backslash L(v)$.
        \\
        Color $v_1$ by $c\in L(v_1)\backslash L(v)$ and call the new list of available $L_1$. So, $|L_1(v)|\ge 7$, $|L_1(v_2)|\ge 3$, $|L_1(v_3)|\ge 4$, $|L_1(v_4)|\ge 6$, $|L_1(v_5)|\ge 3$, $|L_1(v_6)|\ge 2$, and $|L_1(v_7)|\ge 2$. Greedily color in order $v_7$, $v_6$, $v_5$, $v_2$, $v_3$, $v_4$ and $v$ in $G^2$ to obtain $\chi_2(G) \leq 10$, a contradiction.
        \item \textbf{Subcase 2.2:} There exists $c\in L(v_5)\backslash L(v)$. \\
        Color $v_5$ by $c\in L(v_5)\backslash L(v)$ and call the new list of available $L_1$. So, $|L_1(v)|\ge 7$, $|L_1(v_1)|\ge 5$, $|L_1(v_2)|\ge 4$, $|L_1(v_3)|\ge 4$, $|L_1(v_4)|\ge 6$, $|L_1(v_6)|\ge 1$, and $|L_1(v_7)|\ge 2$. Greedily color in order $v_6$, $v_7$, $v_3$, $v_2$, $v_1$, $v_4$ and $v$ in $G^2$ to obtain $\chi_2(G) \leq 10$, a contradiction. \qedhere
    \end{itemize}
\end{itemize}
\end{proof}

\begin{lemma} \label{l18}
     Let $f=v_1v_2v_3v_4v_5$ be a semi-bad $5$-face such that $d(v_5)=4$. Then, $v_2$ and $v_3$ are incident to at least one $6^+$-face. 
\end{lemma}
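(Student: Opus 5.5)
By symmetry (the reflection $v_1\leftrightarrow v_4$, $v_2\leftrightarrow v_3$ fixes $v_5$), I will only prove the claim for $v_2$. The argument has three steps: identify the faces at $v_2$, pin down the degrees around them, and then run a reducible-configuration colouring argument.

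\textbf{Step 1: the faces at $v_2$.} The vertex $v_2$ is a $3$-vertex incident to three faces. One is $f$, and the other two contain the edges $v_2v_1$ and $v_2v_3$ respectively. Let $w$ be the third neighbour of $v_2$. Suppose, to the contrary, that both of these other faces are $5^-$-faces.

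\textbf{Step 2: ruling out $4$-faces and fixing degrees.} Since $v_1v_2$ and $v_2v_3$ are edges joining two $3$-vertices, \Cref{l8} implies that such an edge lies on a $4$-face only if that face contains two $4$-vertices. First, by \Cref{l5} neither face is a $3$-face. Next, if a $4$-face contained $v_2$, then $v_2$ would be a $3(1)$-vertex incident to the $5$-face $f$. It would therefore be a weak $3(1)$-vertex, and \Cref{l12} would apply to it together with its $3$-neighbour on that $4$-face. For the face through $v_1v_2$ (say $v_1v_2wa$), the $3$-vertices $v_1,v_2$ would be adjacent and lie on the $4$-face, so \Cref{l12} forbids $f$ from being incident to them; this is a contradiction. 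The face through $v_2v_3$ is handled the same way. So both faces are $5$-faces: $g=v_1v_2wab$ and $g'=v_2v_3cdw$. By \Cref{l16}, the third neighbour $b$ of $v_1$ has degree $4$. Then \Cref{l11}, applied to the adjacent $3$-vertices $v_2,v_3$ (each having two $3$-neighbours if $d(w)=3$), shows that the third neighbours must be $4$-vertices. Hence either $d(w)=4$, or $w$ is a $3$-vertex and we obtain a contradiction quickly. After this step I expect $d(w)=4$, $d(b)=4$, and $d(a)=d(d)=3$ by $4$-irregularity, with $d(c)$ constrained similarly.

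\textbf{Step 3: colouring.} Using \Cref{r1}, I will uncolor $V(f)\cup V(g)\cup V(g')$ and compute the list sizes. The $3$-vertices $v_1,\dots,v_4$ on $f$ will have large lists; for example $v_2$ has all nine of its $G^2$-neighbours uncoloured, so $|L(v_2)|=10$ in the best case. The outer vertices $a,d,c$ will have small lists, around $2$ to $4$. As in \Cref{l13} and \Cref{l17}, planarity and $g(G)\ge 4$ should give a pair of vertices at distance $3$, such as $a$ and $v_3$ or $d$ and $v_1$, that share a common $G^2$-neighbour with a long list. I will colour this pair so as to \emph{save a color} at that neighbour. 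I will then split into the cases $L(a)\cap L(v_3)\neq\emptyset$ or not, just as the earlier lemmas do, and finish by greedily colouring the vertices in increasing order of slack, ending with $v_2$.

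\textbf{Main obstacle.} The hard part is the bookkeeping in Step 3. It requires confirming which pairs are genuinely at distance $3$, which is where the planarity argument is needed, and checking that the greedy order never leaves some vertex with too few colours. In particular, the $4$-vertices $w$ and $b$ have tight lists, and the case where $w$ and $v_5$ are at distance $2$ may need a separate subcase along the lines of Case 2 of \Cref{l16}.
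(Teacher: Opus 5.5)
Your Steps 1 and 2 agree with the paper. Both other faces at $v_2$ are $5$-faces, Lemma~\ref{l16} gives $d(b)=4$, and Lemma~\ref{l11} gives $d(w)=d(c)=4$ outright: $v_2$ and $v_3$ already have two $3$-neighbours each, so there is no case $d(w)=3$ to handle. Hence $d(a),d(d)\le 3$. A $4$-face through $v_1v_2$ is also excluded more directly than via Lemma~\ref{l12}, since by Lemma~\ref{l8} its other two vertices would be adjacent $4$-vertices.

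The gap is Step~3, which is the whole content of the lemma and which you only announce. After uncolouring the ten vertices of $f,g,g'$, the worst-case bounds are:
\begin{itemize}
\item $|L(v_1)|\ge 6$;
\item $|L(v_2)|\ge 9$, not $10$: since $d(w)=4$, $v_2$ has ten $G^2$-neighbours and the fourth neighbour of $w$ is coloured;
\item $|L(v_3)|\ge 7$, $|L(v_4)|\ge 4$, $|L(w)|\ge 5$;
\item $|L(x)|\ge 3$ for $x\in\{v_5,a,b,c,d\}$.
\end{itemize}
So $v_2$ and $v_3$ have exactly as many colours as uncoloured $G^2$-neighbours, and every other vertex has strictly fewer. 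Everything therefore hinges on a saving pair that is provably at distance $3$.

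Your candidates are not such pairs. Nothing you cite prevents the third neighbour of $a$ from being $c$; then $awdc$ is a $4$-cycle with two $3$-vertices and two non-adjacent $4$-vertices, which Lemma~\ref{l8} permits. Likewise nothing prevents the third neighbour of $d$ from being $b$ or $v_5$. Handling these would need extra case analysis that you have not given.

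The missing idea is the pair $(v_1,c)$, which is the pair the paper uses. We have $N(v_1)=\{v_2,v_5,b\}$, where $v_5$ and $b$ are $4$-vertices and so not adjacent to the $4$-vertex $c$. Also $c\notin N(v_2)$ by girth, and $c\neq b$ by Lemma~\ref{l8}. Hence $d(v_1,c)=3$ essentially by $4$-irregularity alone. Both $v_1$ and $c$ are $G^2$-adjacent to $v_3$, and $|L(v_1)|+|L(c)|\ge 9>7$, so you can save a colour at $v_3$. This gives three cases, each closed by a greedy order:
\begin{itemize}
\item a common colour on $v_1,c$: colour $b,a,d,w,v_5,v_4,v_3,v_2$;
\item $v_1$ coloured outside $L(v_3)$: colour $b,a,d,w,c,v_5,v_4,v_2,v_3$;
\item $c$ coloured outside $L(v_3)$: colour $d,a,b,w,v_5,v_4,v_1,v_2,v_3$.
\end{itemize}
No subcase on $d(w,v_5)$ is needed. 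Until you fix such a pair and verify the greedy orders, your proposal is a plan rather than a proof.
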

\begin{proof}
   We will show that $v_2$ is incident to a $6^+$-face. In a similar way, we can show that $v_3$ is also incident to a $6^+$-face. Suppose, to the contrary, that $v_2$ is not incident to any $6^+$-face. Then, by Lemma \ref{l8}, we deduce that $v_2$ is incident to three $5$-faces. Let $f'=v_1u_1u_2u_3v_2$ and $f''=v_2u_3u_4u_5v_3$ be the other two $5$-faces incident to $v_2$ (See \Cref{f2}). By Lemma \ref{l16}, we have $d(u_1)=4$. By Lemma \ref{l11}, we deduce that $d(u_3)=d(u_5)=4$. Hence, since $G$ is $4$-irregular, we have $d(u_2)=d(u_4)=3$. Note that $N(v_1)=\{v_2,v_5,u_1\}$. Hence, since $G$ is $4$-irregular and $g(G) \ge 4$, we deduce that $d(v_1,u_5)=3$. Greedily color all the vertices in $G^2$ using 10 colors except the vertices incident to $f$, $f'$, and $f''$. Hence, we have $|L(v_1)|\ge 6$, $|L(v_2)|\ge 9$, $|L(v_3)|\ge 7$, $|L(v_4)|\ge 4$, $|L(v_5)|\ge 3$, $|L(u_1)|\ge 3$, $|L(u_2)|\ge3 $, $|L(u_3)|\ge 5$, $|L(u_4)|\ge 3$, and $|L(v_5)|\ge 3$.
   \begin{itemize}
       \item \textbf{Case 1:} $L(v_1)\cap L(u_5)\neq \emptyset$. \\
       Color $v_1$ and $u_5$ by $c\in L(v_1)\cap L(u_5)$ and call the new list of available colors $L_1$. Then, 
       $|L_1(v_2)|\ge 8$, $|L_1(v_3)|\ge 6$, $|L_1(v_4)|\ge 3$, $|L_1(v_5)|\ge 2$, $|L_1(u_1)|\ge 2$, $|L_1(u_2)|\ge 2$, $|L_1(u_3)|\ge 4$, and $|L_1(u_4)|\ge 2$. Greedily color in order $u_1$, $u_2$, $u_4$, $u_3$, $v_5$, $v_4$, $v_3$, and $v_2$ in $G^2$ to obtain $\chi_2(G)\leq 10$, a contradiction.

       \item \textbf{Case 2:} $L(v_1)\cap L(u_5)= \emptyset$. \\
       Then, since $|L(v_1)\cup L(u_5)|>7$, we deduce that either there exists $c\in L(v_1)\backslash L(v_3)$ or there exists $c\in L(u_5)\backslash L(v_3)$.
       \begin{itemize}
           \item \textbf{Subcase 2.1:} There exists $c\in L(v_1)\backslash L(v_3)$. \\
            Color $v_1$ by $c\in L(v_1)\backslash L(v_3)$  and call the new list of available colors $L_1$. Then, 
       $|L_1(v_2)|\ge 8$, $|L_1(v_3)|\ge 7$, $|L_1(v_4)|\ge 3$, $|L_1(v_5)|\ge 2$, $|L_1(u_1)|\ge 2$, $|L_1(u_2)|\ge 2$, $|L_1(u_3)|\ge 4$, $|L_1(u_4)|\ge 3$, and $|L_1(u_5)|\ge 3$. Greedily color in order $u_1$, $u_2$, $u_4$, $u_3$, $u_5$, $v_5$, $v_4$, $v_2$, and $v_3$ in $G^2$ to obtain $\chi_2(G)\leq 10$, a contradiction.
  \item \textbf{Subcase 2.1:} There exists $c\in L(u_5)\backslash L(v_3)$. \\
  Color $u_5$ by $c\in L(u_5)\backslash L(v_3)$  and call the new list of available colors $L_1$. Then, 
      $|L_1(v_1)|\ge 6$, $|L_1(v_2)|\ge 8$, $|L_1(v_3)|\ge 7$, $|L_1(v_4)|\ge 3$, $|L_1(v_5)|\ge 3$, $|L_1(u_1)|\ge 3$, $|L_1(u_2)|\ge 3$, $|L_1(u_3)|\ge 4$, and $|L_1(u_4)|\ge 2$. Greedily color in order $u_4$, $u_2$, $u_1$, $u_3$, $v_5$, $v_4$, $v_1$, $v_2$, and $v_3$ in $G^2$ to obtain $\chi_2(G)\leq 10$, a contradiction. \qedhere
       \end{itemize}
   \end{itemize}
\end{proof}

\begin{figure}[htbp]
    \centering
    \begin{subfigure}[b]{0.48\textwidth}
        \centering
        \begin{tikzpicture}[
            scale=0.85,
            vertex/.style={circle, fill=black, inner sep=2pt},
            line/.style={thick}
        ]
            \node[vertex, label=below left:$u_1$]  (u1) at (0, 0) {};
            \node[vertex, label=below:$v_1$]       (v1) at (2.5, 0) {};
            \node[vertex, label=below right:$v_5$] (v5) at (5, 0) {};

            \node[vertex, label=left:$u_2$]        (u2) at (0, 1.5) {};
            \node[vertex, label=right:$v_4$]       (v4) at (5, 1.5) {};

            \node[vertex, label=left:$u_3$]        (u3) at (0, 3) {};
            \node[vertex, label=below right:$v_2$] (v2) at (2.5, 3) {};
            \node[vertex, label=right:$v_3$]       (v3) at (5, 3) {};

            \node[vertex, label=above left:$u_4$]  (u4) at (0, 5) {};
            \node[vertex, label=above right:$u_5$] (u5) at (5, 5) {};

            \draw[line] (u1) -- (v1) -- (v5);
            \draw[line] (u3) -- (v2) -- (v3);
            \draw[line] (u4) -- (u5);

            \draw[line] (u1) -- (u2) -- (u3) -- (u4);
            \draw[line] (v1) -- (v2);
            \draw[line] (v5) -- (v4) -- (v3) -- (u5);
        \end{tikzpicture}
        
        \label{fig:graph1}
    \end{subfigure}
    \hfill
    \begin{subfigure}[b]{0.48\textwidth}
        \centering
        \begin{tikzpicture}[
            scale=0.85,
            vertex/.style={circle, fill=black, inner sep=2pt},
            line/.style={thick}
        ]
            \node[vertex, label=below left:$v_4$]  (v4) at (0, 0) {};
            \node[vertex, label=below:$v_5$]       (v5) at (2.5, 0) {};
            \node[vertex, label=below right:$v_6$] (v6) at (5, 0) {};

            \node[vertex, label=left:$v_7$]        (v7) at (0, 1.5) {};
            \node[vertex, label=right:$v_3$]       (v3) at (5, 1.5) {};

            \node[vertex, label=left:$v_8$]        (v8) at (0, 3) {};
            \node[vertex, label=below right:$v_1$] (v1) at (2.5, 3) {};
            \node[vertex, label=right:$v_2$]       (v2) at (5, 3) {};

            \node[vertex, label=above left:$v_9$]     (v9)  at (0, 5) {};
            \node[vertex, label=above right:$v_{10}$] (v10) at (5, 5) {};

            \draw[line] (v4) -- (v5) -- (v6);
            \draw[line] (v8) -- (v1) -- (v2);
            \draw[line] (v9) -- (v10);

            \draw[line] (v4) -- (v7) -- (v8) -- (v9);
            \draw[line] (v5) -- (v1);
            \draw[line] (v6) -- (v3) -- (v2) -- (v10);
        \end{tikzpicture}

    \end{subfigure}
    \caption{Illustration of \Cref{l18} and \ref{l20}}
    \label{f2}
\end{figure}

\begin{lemma} \label{l19}
     A $3(1)$ vertex is not incident to any bad or semi-bad $5$-faces.
 \end{lemma}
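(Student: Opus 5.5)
We argue by contradiction. Let $v$ be a $3(1)$-vertex incident to a $4$-face $C=vv_1v_2v_3$ and to a $5$-face $f$ that is bad or semi-bad. By \Cref{l8}, $C$ has exactly two $3$-vertices and two $4$-vertices, and $v$ has at most one $3$-neighbor on $C$.

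\textbf{The bad case.} All five vertices of $f$ have degree $3$, so both neighbors of $v$ on $f$ are $3$-vertices. Since $v$ is a $3$-vertex, the faces at $v$ are $C$, $f$ and a third face, and every edge at $v$ lies on two of these faces. Hence one of the two $f$-edges at $v$ is also an edge of $C$. So $v$ has a $3$-neighbor $u$ with $vu$ shared by $C$ and $f$; say $u=v_1$, with $d(v_1)=3$ and $d(v_2)=d(v_3)=4$. Then $v$ and $v_1$ are adjacent $3$-vertices, both lying on the $4$-face $C$ and both incident to the $5$-face $f$. This contradicts \Cref{l12} directly.

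\textbf{The semi-bad case.} Here $f$ has exactly one $4$-vertex $w$.
\begin{itemize}
\item If $w\neq v_1$, where $vv_1$ is the edge shared by $C$ and $f$, then $d(v_1)=3$. We again get two adjacent $3$-vertices on $C$ incident to the $5$-face $f$, contradicting \Cref{l12}.
\item The remaining configuration is $w=v_1$ with $d(v_1)=4$. Then $d(v_3)=3$ and $d(v_2)=4$. Write $f=vv_1x_1x_2x_3$ with $d(x_1)=d(x_2)=d(x_3)=3$ and $x_3$ adjacent to $v$.
\end{itemize}

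In the remaining configuration $v$ has two $3$-neighbors, $v_3$ and $x_3$. The edge $vx_3$ is not on $C$, so the third face at $v$ contains $v_3vx_3$.

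\textbf{First route: \Cref{l16}.} Apply \Cref{l16} to $f$ with the $4$-vertex $v_1$ in the role of $v_5$. Then $v$ and $x_2$ play the roles of the vertices adjacent to the $4$-vertex's neighbors, that is, the ends $v_4, v_1$ of the statement. So their third neighbors must be $4$-vertices. The third neighbor of $v$ (outside $f$) is $v_3$, which has degree $3$. This is a contradiction.

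To check the indexing, write $f=u_1u_2u_3u_4u_5$ with $u_5=v_1$, $u_1=x_1$, $u_2=x_2$, $u_3=x_3$, $u_4=v$. The lemma then says the third neighbors of $u_1=x_1$ and $u_4=v$ have degree $4$. The third neighbor of $v$ is $v_3$, a $3$-vertex. Hence the contradiction is immediate from \Cref{l16}.

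\textbf{Fallback if the indexing fails.} If the orientation or indexing turns out to be subtle, I would fall back to a direct argument. Delete $v$'s configuration, that is, greedily color everything except $V(C)\cup V(f)$. Then count lists: $v$ and $x_3$ have small degree in $G^2$. Save a color at $v$ using a pair at distance $3$, such as $v_2$ and $x_2$, whose distance is forced by planarity and \Cref{l5}. Finally, color greedily, finishing with $v$.

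The main obstacle is verifying that every case collapses to \Cref{l12} or \Cref{l16}. In particular, this means checking that the face incidences at a $3$-vertex force the shared edge, which needs \Cref{l1} so that faces are distinct.
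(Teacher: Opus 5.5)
Your proof has a genuine gap, and it comes from a wrong degree assignment on the $4$-face.

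By \Cref{l8} the $4$-cycle $C$ contains two $4$-vertices. By $4$-irregularity they are not adjacent, so they are opposite on $C$ and the degrees alternate around it. Hence both $C$-neighbors $v_1,v_3$ of $v$ have degree $4$, and $d(v_2)=3$.

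Your bad-case configuration ($d(v_2)=d(v_3)=4$) and your remaining configuration ($d(v_1)=d(v_2)=4$, $d(v_3)=3$) both place adjacent $4$-vertices on $C$. Your first two reductions still end in contradictions, but for this simpler reason, not via \Cref{l12}, which concerns adjacent $3(1)$-vertices on two different $4$-faces. The edge shared by $C$ and $f$ joins $v$ to a $4$-vertex. So $f$ cannot be bad, and if it is semi-bad its unique $4$-vertex is $v_1$.

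The serious problem is the configuration that actually occurs: $d(v_1)=d(v_3)=4$, $d(v_2)=3$, and $f=vv_1x_1x_2x_3$ with $d(x_1)=d(x_2)=d(x_3)=3$. Here $v$ has a single $3$-neighbor, $x_3$. Its neighbor off $f$ is $v_3$, a $4$-vertex, which is exactly what \Cref{l16} requires. So your ``first route'' yields no contradiction.

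The whole content of the lemma is therefore a recoloring argument, and your fallback does not provide one. It gives no list sizes or coloring order. Its key claim, that $d(v_2,x_2)=3$ is forced by planarity and \Cref{l5}, is false: $v_2$ and $x_2$ may share a neighbor $y$, since a $5$-cycle $v_2v_1x_1x_2y$ is not excluded.

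The paper splits on exactly this distance. Uncoloring $V(C)\cup V(f)$ gives $|L(v)|\ge 6$, $|L(v_2)|\ge 3$, $|L(v_3)|\ge 3$, $|L(x_1)|\ge 4$, $|L(x_2)|\ge 4$, $|L(v_1)|\ge 5$, and $|L(x_3)|\ge 5$.

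If $d(v_2,x_2)=3$, save a color at $v$ on the pair $v_2,x_2$. Then color greedily in the order $v_3,x_1,v_1,x_3,v$.

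If $d(v_2,x_2)\le 2$, \Cref{l11} rules out adjacency. The common neighbor $y$ raises the bounds to $|L(v_2)|\ge 4$ and $|L(x_2)|\ge 5$. \Cref{l9} gives $y\neq v_3$, and planarity then forces $d(v_3,x_1)=3$. Save at $v$ on the pair $v_3,x_1$ instead, and color greedily in the order $v_2,v_1,x_3,x_2,v$.
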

\begin{proof}
    Let $v$ be $3(1)$-vertex incident to a $4$-face $f=vv_1v_2v_3$. Suppose, to the contrary, that $v$ is incident to a semi-bad or bad $5$-face $f'$. Without loss of generality, suppose that $f'=vv_3v_4v_5v_6$. Then, $d(v_4)=d(v_5)=d(v_6)=3$. Moreover, $d(v_2)=3$ and $d(v_1)=d(v_3)=4$. We will study two cases depending on $d(v_2,v_5)$.
    \begin{itemize}
        \item \textbf{Case 1:} $d(v_2,v_5)\leq 2$.\\
        By Lemma \ref{l11}, we deduce that $d(v_2,v_5)=2$. Let $x\in N(v_2)\cap N(v_5)$. Greedily color all the vertices in $G^2$ using $10$ colors except the vertices incident to $f$ and $f'$. So, we have $|L(v)|\ge 6$,   
         $|L(v_1)|\ge 3$,  $|L(v_2)|\ge 4$, $|L(v_3)|\ge 5$, $|L(v_4)|\ge 4$, $|L(v_5)|\ge 5$, and  $|L(v_6)|\ge 5$. By Lemma \ref{l9}, we deduce that $x\neq v_1$. Thus, since $G$ is planar with $g(G)\ge 4$, we deduce that $d(v_1,v_4)=3$. Since $|L(v_1)|+|L(v_4)|>6$, we can color $v_1$ by $c$ and $v_4$ by $c'$ such that $|L(v)\backslash \{c,c'\}|\ge 5$. Color $v_1$ by $c$ and $v_4$ by $c'$ and call the new list of available colors $L_1$. 
         
         So, we have $|L_1(v)|\ge 5$,  $|L_1(v_2)|\ge 2$, $|L_1(v_3)|\ge 3$, $|L_1(v_5)|\ge 4$, and  $|L_1(v_6)|\ge 3$. Greedily color in order $v_2$, $v_3$, $v_6$, $v_5$, and $v$ in $G^2$ to obtain $\chi_2(G)\leq 10 $, a contradiction.
         \item \textbf{Case 2:} $d(v_2,v_5)=3$. \\
         Greedily color all the vertices in $G^2$ using $10$ colors except the vertices incident to $f$ and $f'$. So, we have $|L(v)|\ge 6$,   
         $|L(v_1)|\ge 3$,  $|L(v_2)|\ge 3$, $|L(v_3)|\ge 5$, $|L(v_4)|\ge 4$, $|L(v_5)|\ge 4$, and  $|L(v_6)|\ge 5$. Since $d(v_2,v_5)=3$ and  $|L(v_2)|+|L(v_5)|>6$, we can color $v_2$ by $c$ and $v_5$ by $c'$ such that $|L(v)\backslash \{c,c'\}|\ge 5$. Color $v_2$ by $c$ and $v_5$ by $c'$ and call the new list of available colors $L_1$. 
         
         So, we have $|L_1(v)|\ge 5$,  $|L_1(v_1)|\ge 2$, $|L_1(v_3)|\ge 3$, $|L_1(v_4)|\ge 2$, and  $|L_1(v_6)|\ge 4$. Greedily color in order $v_1$, $v_4$, $v_3$, $v_6$, and $v$ in $G^2$ to obtain $\chi_2(G)\leq 10 $, a contradiction. \qedhere 
    \end{itemize}
\end{proof}

\begin{lemma} \label{l20}
     Let $f=v_1v_2v_3v_4v_5$ be a semi-bad $5$-face such that $d(v_5)=4$. Then, $v_1$ and $v_4$ are incident to at least one $6^+$-face.
\end{lemma}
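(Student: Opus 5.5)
I will show that $v_1$ is incident to a $6^+$-face; the argument for $v_4$ is symmetric. Suppose, to the contrary, that $v_1$ is incident to no $6^+$-face.

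\textbf{Step 1: the configuration.} By Lemma \ref{l19}, $v_1$ is not a $3(1)$-vertex, since it lies on the semi-bad face $f$. Hence all three faces at $v_1$ are $5$-faces. Label the second neighbor of $v_2$ off $f$ as $u$ and the third neighbor of $v_1$ as $w$. By Lemma \ref{l16}, $d(w)=4$. The two other faces at $v_1$ are then $f'=v_1wv_8v_9v_2$-type and $f''=v_1v_5v_6v_7w$-type, following the labelling of the second picture in \Cref{f2}:
\begin{itemize}
\item $f'=v_1v_2v_{10}v_9v_8$, with $v_8=w$ of degree $4$;
\item $f''=v_1v_5v_4'\ldots$, i.e. $v_1 v_5 v_6 v_7 v_8$ after relabelling.
\end{itemize}
Since $v_5$ and $v_8$ are $4$-vertices and $G$ is $4$-irregular, their neighbors on these faces ($v_6,v_7$ on $f''$, and $v_9$ on $f'$) are $3$-vertices. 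By Lemma \ref{l11} applied to the path $v_1v_2v_3$ of $3$-vertices (each of $v_1,v_2$ has two $3$-neighbors), the third neighbor $v_{10}$ of $v_2$ is a $4$-vertex. Hence $f'=v_1v_2v_{10}v_9v_8$ has degrees $3,3,4,3,4$, and $f''=v_1v_5v_6v_7v_8$ has degrees $3,4,3,3,4$.

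\textbf{Step 2: coloring.} Greedily color all vertices of $G^2$ except those incident to $f,f',f''$, using Remark \ref{r1}. I then compute the list sizes. The vertex $v_1$ has all its $G^2$-neighbors uncolored, so $|L(v_1)|\ge 9$. The vertices $v_2,v_5,v_8$ retain large lists, about $6$ to $7$. The outer vertices $v_3,v_4,v_6,v_7,v_9,v_{10}$ keep only about $3$ to $4$ colors each.

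\textbf{Step 3: saving a color.} I look for a pair of vertices at distance $3$ that both lie in the $G^2$-neighborhood of a vertex with a tight list. Planarity together with $g(G)\ge 4$ gives candidates such as $v_3$ and $v_9$, or $v_{10}$ and $v_6$; these are separated by the three faces around $v_1$. Lemmas \ref{l8} and \ref{l9} rule out the short chords that would create extra adjacencies. If $|L(x)|+|L(y)|$ exceeds the list size of the common neighbor (typically $v_2$ or $v_5$), I color $x$ and $y$ while removing only one color from that vertex. This is done by splitting into the cases $L(x)\cap L(y)\neq\emptyset$ and $L(x)\cap L(y)=\emptyset$, as in Lemmas \ref{l17} and \ref{l18}.

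\textbf{Step 4: finishing greedily.} I color the remaining vertices in increasing order of slack: first the degree-$3$ outer vertices, then $v_4,v_3$, then the $4$-vertices $v_5,v_8,v_{10}$, then $v_2$, and finally $v_1$, whose list of $9$ absorbs everything.

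\textbf{Main obstacle.} The hardest step is certifying the distance-$3$ pair and counting the lists exactly. Some outer vertices could be at distance $2$ through a vertex outside the three faces, which would change the list sizes. If so, I would split into cases on whether that distance is at most $2$, as in Lemma \ref{l16}. In the distance-$2$ case, I would include the common neighbor $x$ in the uncolored set; this raises every list by one and lets the greedy order still succeed, with Lemma \ref{l11} excluding the degenerate case of distance $1$.
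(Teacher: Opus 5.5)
\textbf{There is a genuine gap in Steps 2 and 4, and it sits at the decisive vertex.}

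\emph{The list of $v_1$.} Your claim that all $G^2$-neighbors of $v_1$ are uncolored is false. The vertices $v_5$ and $v_8$ are $4$-vertices, so each has a fourth neighbor off $f,f',f''$, and these two vertices are already colored. In general you only get $|L(v_1)|\ge 8$, not $9$.

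\emph{The finishing order.} The vertex $v_1$ is within distance $2$ of all nine other uncolored vertices $v_2,\dots,v_{10}$. Ending the greedy order with $v_1$ therefore requires those nine vertices to use at most $7$ distinct colors of $L(v_1)$, i.e.\ two savings at $v_1$. Even with your value $9$ you would still need one saving at $v_1$, but the saving you propose is at $v_2$ or $v_5$. So ``finally $v_1$, whose list of $9$ absorbs everything'' fails. Since Step 3 only lists candidate pairs with approximate list sizes, no valid completion is actually exhibited.

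\textbf{The fix: finish on $v_2$.} We have $|L(v_2)|\ge 7$, and its uncolored $G^2$-neighbors are the seven vertices $v_1,v_3,v_4,v_5,v_8,v_9,v_{10}$. So a single saving at $v_2$ is exactly what is needed. After the saving step $v_1$ still has at least $6$ colors, so it can be colored just before $v_3$ and $v_2$. The paper splits on $d(v_5,v_{10})$:
\begin{itemize}
\item If $d(v_5,v_{10})=3$, save at $v_2$ with the pair $(v_5,v_{10})$, using $|L(v_5)|+|L(v_{10})|\ge 5+3>7$.
\item If $d(v_5,v_{10})\le 2$, a path of length at most $2$ from $v_5$ to $v_{10}$ separates $v_4$ from $v_8$ in the plane. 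Hence $d(v_4,v_8)=3$ by planarity and $g(G)\ge 4$, and you save at $v_2$ with $(v_4,v_8)$, using $4+5>7$.
\end{itemize}
No extra vertex has to be added to the uncolored set, so the fallback in your last paragraph is unnecessary. Your suggested pair $(v_6,v_{10})$ does not fit this scheme: in general neither $v_2$ nor $v_5$ is within distance $2$ of both.

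\textbf{A smaller slip in Step 1.} Lemma \ref{l11} cannot be applied to $v_1,v_2$: once $d(v_8)=4$, the vertex $v_1$ has only one $3$-neighbor. Apply it instead to $v_2,v_3$, each of which has two $3$-neighbors, to get $d(v_{10})=4$.
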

\begin{proof}
    We will show that $v_1$ is incident to a $6^+$-face. In a similar way, we can show that $v_4$ is also incident to a $6^+$-face.
   Suppose, to the contrary, that $v_1$ is not incident to any $6^+$-face. By \Cref{l19}, we deduce that $v_1$ is not incident to any $4$-face.

   Let $f'=v_1v_5v_6v_7v_8$ and $f''=v_1v_8v_9v_{10}v_2$ be the other two $5$-faces incident to $v_1$ (See \Cref{f2}). By \Cref{l11}, we deduce that $d(v_{10})=4$. By \Cref{l16}, we have $d(v_8)=4$. Thus, since $G$ is $4$-irregular, $d(v_6)=d(v_7)=d(v_9)=3$.
   We will study two cases according to $d(v_5,v_{10})$.
   \begin{itemize}
       \item \textbf{Case 1:} $d(v_5,v_{10})\leq 2$. \\
       Greedily color all the vertices in $G^2$ using 10 colors except the vertices incident to $f$, $f'$, and $f''$. Then, we have   
    $|L(v_1)|\ge 8$, $|L(v_2)|\ge 7$,  $|L(v_3)|\ge 5$, $|L(v_4)|\ge 4$, $|L(v_5)|\ge 6$, $|L(v_6)|\ge 4$, $|L(v_7)|\ge 4$, $|L(v_8)|\ge 5$, $|L(v_9)|\ge 3 $, $|L(v_{10})|\ge 4$.
    Since $G$ is planar with $g(G)\ge 4$, we deduce that $d(v_4,v_8)= 3$. Then, since $|L(v_4)|+|L(v_8)|>7$, we can color $v_4$ by $c$ and $v_8$ by $c'$ such that $|L(v_2)\backslash \{c,c'\}|\ge 6$. Color $v_4$ by $c$ and $v_8$ by $c'$
    and call the new list of available colors $L_1$.
    
    So, we have    
    $|L_1(v_1)|\ge 6$,  $|L_1(v_2)|\ge 6$, $|L_1(v_3)|\ge 4$, $|L_1(v_5)|\ge 4$, $|L_1(v_6)|\ge 2$ $|L_1(v_7)|\ge 3$, $|L_1(v_9)|\ge 2$, $|L_1(v_{10})|\ge 3$. Greedily color in order $v_9$, $v_6$, $v_7$, $v_{10}$, $v_5$, $v_1$, $v_3$, and $v_2$ in $G^2$ to obtain $\chi_2(G)\leq 10$, a contradiction.
       \item \textbf{Case 2:} $d(v_5,v_{10})= 3$. \\
       Greedily color all the vertices in $G^2$ using 10 colors except the vertices incident to $f$, $f'$, and $f''$. Then, we have   
    $|L(v_1)|\ge 8$, $|L(v_2)|\ge 7$,  $|L(v_3)|\ge 5$, $|L(v_4)|\ge 4$, $|L(v_5)|\ge 5$, $|L(v_6)|\ge 4$, $|L(v_7)|\ge 4$, $|L(v_8)|\ge 5$, $|L(v_9)|\ge 3 $, $|L(v_{10})|\ge 3$.
    Since $d(v_5,v_{10})=3$ and $|L(v_5)|+|L(v_{10})|>7$, we can color $v_5$ by $c$ and $v_{10}$ by $c'$ such that $|L(v_2))\backslash \{c,c'\}|\ge 6$. Color $v_5$ by $c$ and $v_{10}$ by $c'$
    and call the new list of available colors $L_1$.
    
    So, we have    
    $|L_1(v_1)|\ge 6$,  $|L_1(v_2)|\ge 6$, $|L_1(v_3)|\ge 3$, $|L_1(v_4)|\ge 3$, $|L_1(v_6)|\ge 3$, $|L_1(v_7)|\ge 3$, $|L_1(v_8)|\ge 3$, and $|L_1(v_9)|\ge 2$. Greedily color in order $v_9$, $v_8$, $v_7$, $v_6$, $v_4$, $v_1$, $v_3$, and $v_2$ in $G^2$ to obtain $\chi_2(G)\leq 10$, a contradiction. \qedhere
   \end{itemize}
\end{proof}
By \Cref{l18} and \ref{l20}, we deduce that every semi-bad $3$-vertex is incident to a $6^+$-face.
\begin{lemma} \label{l21}
    Let $v$ be a $3(1)$-vertex incident to two $5$-faces. Then, all neighbors of $v$ are of degree $4$.
\end{lemma}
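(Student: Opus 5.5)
The plan is to pin down the degrees forced by the earlier lemmas, reduce the statement to excluding a single configuration, and then kill that configuration by a recolouring argument of the same type as in \Cref{l19} and \Cref{l20}.

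\textbf{Fixing notation and degrees.} Let $f=vv_1v_2v_3$ be the $4$-face incident to $v$. Since $v$ is a $3$-vertex, it is incident to exactly three faces, so its other two faces are the two $5$-faces. Write them as $f'=vv_3v_4v_5v_6$ and $f''=vv_6v_7v_8v_1$, so that $N(v)=\{v_1,v_3,v_6\}$.

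By \Cref{l8}, $f$ contains exactly two $3$-vertices and two $4$-vertices. Since $4$-vertices are pairwise non-adjacent, the two $4$-vertices of $f$ must be opposite on the cycle. As $d(v)=3$, this gives $d(v_1)=d(v_3)=4$ and $d(v_2)=3$.

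It therefore remains to show $d(v_6)=4$. By \Cref{l2}, $v_6$ is not a $2$-vertex, so we suppose for a contradiction that $d(v_6)=3$.

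\textbf{Forcing the remaining degrees.} By \Cref{l19}, neither $f'$ nor $f''$ is bad or semi-bad, so each contains at most three $3$-vertices.
\begin{itemize}
\item In $f'$: $v_4$ is adjacent to the $4$-vertex $v_3$, so $d(v_4)\le 3$, and in fact $d(v_4)=3$ since $v_4$ lies on the $5$-face with the $3$-vertices $v$ and $v_6$ and a $2$-vertex is excluded near $3$-vertices by \Cref{l2}. Then $f'$ already has the three $3$-vertices $v,v_6,v_4$, which forces $d(v_5)=4$.
\item In $f''$: symmetrically, $d(v_7)=3$ and $d(v_8)=4$.
\end{itemize}
So we are left with an explicit configuration: the $3$-vertex $v_6$ has neighbours $v$ (degree $3$), $v_5$ (degree $4$) and $v_7$ (degree $3$). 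Note that $v_6$ and $v_7$ are adjacent $3$-vertices, and \Cref{l11} also constrains their third neighbours.

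\textbf{Colouring step.} Greedily colour $G^2$ with $10$ colours except the vertices incident to $f$, $f'$ and $f''$, namely $v,v_1,\dots,v_8$. Then compute lower bounds on the list sizes. The uncoloured vertices $v$ and $v_6$ should have large lists (around $7$ or $8$), while $v_2,v_4,v_7$ and the $4$-vertices $v_1,v_3,v_5,v_8$ will have lists of size about $3$ to $5$.

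Next, find a pair of uncoloured vertices at distance exactly $3$, both at distance $2$ from $v$ (or from $v_6$), to save a colour. The natural candidates are $\{v_2,v_5\}$ or $\{v_2,v_7\}$ (for $v$), or $\{v_4,v_8\}$ (for $v_6$). Planarity together with $g(G)\ge 4$ and \Cref{l9} should guarantee that at least one of these pairs is at distance $3$. If no pair is directly available, split into cases on $d(v_2,v_5)$ as in \Cref{l19}, using \Cref{l11} to rule out distance $1$.

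After saving the colour (colouring the pair with the same colour, or with one colour outside the list of $v$), colour greedily. Start with the vertices of smallest list ($v_2,v_4,v_7$ and the $4$-vertices), and finish with $v_6$ and then $v$.

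\textbf{Main obstacle.} The delicate part is the list-size bookkeeping. In particular, the $4$-vertices $v_1,v_3,v_5,v_8$ each have many coloured neighbours outside the configuration, so their lists may be as small as $3$. The greedy order must be chosen so that each of them still has a free colour when reached, which probably requires case analysis on whether pairs such as $v_2,v_5$ share a common neighbour. I would first try the case split on $d(v_2,v_5)\le 2$ versus $d(v_2,v_5)=3$, mirroring \Cref{l19}.
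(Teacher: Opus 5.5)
Your degree analysis has a labelling error. More seriously, the colouring step, which is the whole content of the lemma, is left open in a form that does not work.

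\textbf{Degrees.} The mirror image of $f'$ in $f''=vv_6v_7v_8v_1$ exchanges $v_4\leftrightarrow v_8$ and $v_5\leftrightarrow v_7$. So you get $d(v_8)=3$ and $d(v_7)=4$. Your $d(v_8)=4$ is impossible, because $v_8$ is adjacent to the $4$-vertex $v_1$. Hence $v_6$ has neighbours of degrees $3,4,4$, $v_6v_7$ is not an edge between $3$-vertices, and Lemma~\ref{l11} plays no role. Also, $d(v_4)\neq 2$ does not follow from Lemma~\ref{l2}, since $v_4$ is adjacent to neither $v$ nor $v_6$. It follows from Lemma~\ref{l7}, because $v_3$ is a $4$-vertex on the $4$-cycle $f$.

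\textbf{Colouring.} With the correct degrees, colour everything except $v,v_1,\dots,v_8$. The worst-case list sizes for $v,v_1,\dots,v_8$ are then $8,5,4,5,3,3,6,3,3$. The vertex $v$ is $G^2$-adjacent to all of $v_1,\dots,v_8$, and $v_6$ is $G^2$-adjacent to the seven vertices $v,v_1,v_3,v_4,v_5,v_7,v_8$. So finishing with $v_6$ and then $v$ needs a saved colour at \emph{both}.

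None of your candidate pairs provides one. The pairs $\{v_2,v_5\}$ and $\{v_2,v_7\}$ give $4+3\le 8$ at $v$, and $\{v_4,v_8\}$ gives $3+3\le 6$ at $v_6$.

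The pair that does pass the test is $\{v_2,v_6\}$ at $v$, with $4+6>8$. It is still not enough on its own. If you only colour one of them outside $L(v)$, then $v_4$ and $v_8$ can each lose two colours. The $G^2$-path $v_4v_5v_7v_8$ can then be left with lists of sizes $1,2,2,1$, which is not always colourable.

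The missing idea is the paper's chain of claims.
\begin{enumerate}
\item It proves $d(v_2,v_6)=3$ and $d(v_3,v_8)=3$; the latter needs its own recolouring argument.
\item It shows one may assume $L(v_2)\subseteq L(v)$, $L(v_1)\cup L(v_3)\subseteq L(v)$, and $L(v_2)\cap L(v_6)=\emptyset$. The last holds because a common colour on $v_2,v_6$ would save simultaneously at $v,v_1,v_3,v_4,v_8$.
\item Counting $|L(v_2)|+|L(v_6)|>|L(v)|$ then gives a colour $c\in L(v_6)\setminus L(v)$. By the containments, $c$ also avoids $L(v_1)\cup L(v_3)$.
\item Colouring $v_6$ with $c$ costs nothing at $v$, $v_1$ or $v_3$, and the remaining vertices are coloured greedily.
\end{enumerate}
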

\begin{proof}
Suppose, to the contrary, that $v$ has a $3$-neighbor.
Let $f=vv_1v_2v_3$ be the $4$-face incident to $v$. Let $f'=vv_3v_4v_5v_6$ and $f''=vv_6v_7v_8v_1$ be the $5$-faces incident to $v$ (See \Cref{f3}). By Lemma \ref{l8}, we deduce that $d(v_1)=d(v_3)=4$ and $d(v_2)=3$. Hence, $v_6$ is the $3$-neighbor of $v_1$. Since $G$ is $4$-irregular, we have $d(v_4)=d(v_8)=3$.
\begin{claim} \label{c1}
    $d(v_2,v_6)=3$.
\end{claim}
\begin{proof}
Suppose, to the contrary, that $d(v_2,v_6)\leq 2$.
Since $N(v_6)=\{v,v_5,v_7\}$, we deduce that $v_2\notin N(v_6)$. Hence $d(v_2,v_6)=2$. Note than $v\notin N(v_2)$ since $g(G)\ge 4$. Then, either $v_5\in N(v_2)\cap N(v_6)$ or $v_7\in N(v_2)\cap N(v_6)$. Without loss of generality, suppose that $v_5\in N(v_2)\cap N(v_6)$.
Then, $v_2$ is contained in two adjacent $4$-cycles $vv_1v_2v_3$ and $v_2v_3v_4v_5$, a contradiction by Lemma \ref{l9}.
\end{proof}
\begin{claim} \label{c2}
    $d(v_3,v_8)=3$.
\end{claim}
\begin{proof}
Suppose, to the contrary, that $d(v_3,v_8)\leq 2$. Suppose that $d(v_3,v_8)=1$. Then $v$ is contained in two $4$-cycles and has a $3$-neighbor, a contradiction by \Cref{l10}. Hence, $d(v_3,v_8)=2$.
    Let $x\in N(v_3)\cap N(v_8)$. Since $G$ is planar and $4$-irregular with $g(G)\ge 4$, we deduce that $x\notin \{v_1,v_2,v_5,v_7\}$. Moreover, we have $d(x)=3$.
\begin{itemize}
    \item \textbf{Case 1:} $x\neq v_4$. \\
    Then, $x$ is not incident to $f$, $f'$, or $f''$.
    Greedily color all the vertices in $G^2$ using 10 colors except $x$ and the vertices incident to $f$, $f'$, and $f''$. Then, we have $|L(v)|\ge 9$,   
    $|L(v_1)|\ge 6$, $|L(v_2)|\ge 5$,  $|L(v_3)|\ge 7$, $|L(v_4)|\ge 4$, $|L(v_5)|\ge 3$, $|L(v_6)|\ge 6$, $|L(v_7)|\ge 4$, $|L(v_8)|\ge 6$, and $|L(x)|\ge 6$.
    Since $d(v_2,v_6)=3$ by \Cref{c1} and $|L(v_2)|+|L(v_6)|>9$, we can color $v_2$ by $c$ and $v_6$ by $c'$ such that $|L(v)\backslash \{c,c'\}|\ge 8$. Color $v_2$ by $c$ and $v_6$ by $c'$
    and call the new list of available colors $L_1$. 
    
    So, we have $|L_1(v)|\ge 8$,   
    $|L_1(v_1)|\ge 4$,  $|L_1(v_3)|\ge 5$, $|L_1(v_4)|\ge 2$, $|L_1(v_5)|\ge 2$, $|L_1(v_7)|\ge 3$, $|L_1(v_8)|\ge 4$, and $|L_1(x)|\ge 5$. Since $G$ is planar, we deduce that $d(v_1,v_5)=3$. Since $|L_1(v_1)|+|L_1(v_5)|>5$, we can color $v_1$ by $c_1$ and $v_5$ by $c_2$ such that $|L_1(v_3)\backslash \{c_1,c_2\}|\ge 4$.  Color $v_1$ by $c_1$ and $v_5$ by $c_2$ and call the new list of available \mbox{colors $L_2$.}  
    
    So, we have $|L_2(v)|\ge 6$,   $|L_2(v_3)|\ge 4$, $|L_2(v_4)|\ge 1$, $|L_2(v_7)|\ge 1$, $|L_2(v_8)|\ge 3$, and $|L_2(x)|\ge 4$. Greedily color in order $v_4$, $v_7$, $v_8$, $x$, $v_3$, and $v$ in $G^2$ to obtain $\chi_2(G)\leq 10$, a contradiction.
    \item \textbf{Case 2:} $x=v_4$. \\
    So, $v_4$ and $v_8$ are adjacent.
Greedily color all the vertices in $G^2$ using $10$ colors except the vertices incident to $f$, $f'$, and $f''$. Then, we have $|L(v)|\ge 8$,   
    $|L(v_1)|\ge 6$, $|L(v_2)|\ge 4$,  $|L(v_3)|\ge 6$, $|L(v_4)|\ge 7$, $|L(v_5)|\ge 4$, $|L(v_6)|\ge 6$, $|L(v_7)|\ge 4$, and $|L(v_8)|\ge 7$.
    Since $d(v_2,v_6)=3$ by \Cref{c1} and $|L(v_2)|+|L(v_6)|>8$, we can color $v_2$ by $c$ and $v_6$ by $c'$ such that $|L(v)\backslash \{c,c'\}|\ge 7$. Color $v_2$ by $c$ and $v_6$ by $c'$
    and call the new list of available colors $L_1$.
    
    So, we have $|L_1(v)|\ge 7$,   
    $|L_1(v_1)|\ge 4$,  $|L_1(v_3)|\ge 4$, $|L_1(v_4)|\ge 5$, $|L_1(v_5)|\ge 3$, $|L_1(v_7)|\ge 3$, and $|L_1(v_8)|\ge 5$.  Note that since $N(v_4)=\{v_3,v_5,v_8\}$, we deduce that $v_1\notin N(v_4)$. Similarly, $v_5\notin N(v_8)$.
    Hence, since $G$ is planar, we deduce that $d(v_1,v_5)=3$. Since $|L_1(v_1)|+|L_1(v_5)|>4$, we can color $v_1$ by $c_1$ and $v_5$ by $c_2$ such that $|L_1(v_3)\backslash \{c_1,c_2\}|\ge 3$.  Color $v_1$ by $c_1$ and $v_5$ by $c_2$ and call the new list of available colors $L_2$.  
    
    So, we have $|L_2(v)|\ge 5$, $|L_2(v_3)|\ge 3$, $|L_2(v_4)|\ge 3$, $|L_2(v_7)|\ge 1$, and $|L_2(v_8)|\ge 3$. Greedily color in order $v_7$, $v_8$, $v_4$, $v_3$, and $v$ in $G^2$ to obtain $\chi_2(G)\leq 10$, a contradiction. \qedhere
    \end{itemize}
    \end{proof}
 Now, greedily color all the vertices in $G^2$ using $10$ colors except the vertices incident to $f$, $f'$, and $f''$.  Then, we have $|L(v)|\ge 8$,   
    $|L(v_1)|\ge 5$, $|L(v_2)|\ge 4$,  $|L(v_3)|\ge 5$, $|L(v_4)|\ge 3$, $|L(v_5)|\ge 3$, $|L(v_6)|\ge 6$, $|L(v_7)|\ge 3$, and $|L(v_8)|\ge 3$. 
\begin{claim} \label{c3}
  $L(v_2)\subset L(v)$. 
\end{claim}
\begin{proof}
Suppose, to the contrary, that $L(v_2)\nsubseteq L(v)$. Then, there exists $c\in L(v_2)\backslash L(v)$. Color $v_2$ by $c$ and call the new list of available colors $L_1$.

So, we have $|L_1(v)|\ge 8$,   
    $|L_1(v_1)|\ge 4$,  $|L_1(v_3)|\ge 4$, $|L_1(v_4)|\ge 2$, $|L_1(v_5)|\ge 3$, $|L_1(v_6)|\ge 6$, $|L_1(v_7)|\ge 3$, and $|L_1(v_8)|\ge 2$. Since $d(v_3,v_8)\ge 3$ by \Cref{c2} and $|L_1(v_3)|+|L_1(v_8)|>4$, we can color $v_3$ by $c_1$ and $v_8$ by $c_2$ such that $|L_1(v_1)\backslash \{c_1,c_2\}|\ge 3$. Color $v_3$ by $c_1$ and $v_8$ by $c_2$ and call the new list of available colors $L_2$.
    
    So, we have $|L_2(v)|\ge 6$, $|L_2(v_1)|\ge 3$, $|L_2(v_4)|\ge 1$, $|L_2(v_5)|\ge 2$, $|L_2(v_6)|\ge 4$, and $|L_2(v_7)|\ge 2$. Greedily color in order $v_4$, $v_5$, $v_7$, $v_6$, $v_1$, and $v$ in $G^2$ to obtain $\chi_2(G)\leq 10$, a contradiction. 
    \end{proof}
\begin{claim} \label{c4}
  $L(v_1)\subset L(v)$ and $L(v_3)\subset L(v)$. 
\end{claim}
\begin{proof}
We will show that $L(v_1)\subset L(v)$. Similarly, we can show that $L(v_3)\subset L(v)$.
Suppose, to the contrary, that $L(v_1)\nsubseteq L(v)$.  Then, there exists $c\in L(v_1)\backslash L(v)$. Color $v_1$ by $c$ and call the new list of available colors $L_1$. Since $L(v_2)\subset L(v)$ by \Cref{c3}, we deduce that $c\notin L(v_2)$.

So, we have $|L_1(v)|\ge 8$,   
    $|L_1(v_2)|\ge 4$,  $|L_1(v_3)|\ge 4$, $|L_1(v_4)|\ge 3$, $|L_1(v_5)|\ge 3$, $|L_1(v_6)|\ge 5$, $|L_1(v_7)|\ge 2$, and $|L_1(v_8)|\ge 2$. Greedily color in order $v_8$, $v_7$, $v_5$, $v_4$, $v_6$, $v_3$, $v_2$, and $v$ in $G^2$ to obtain $\chi_2(G)\leq 10$, a contradiction. 
    \end{proof}
\begin{claim} \label{c5}
    $L(v_2)\cap L(v_6)=\emptyset$.
\end{claim}
\begin{proof}
Suppose that there exists $c\in L(v_2)\cap L(v_6)$. Recall that $d(v_2,v_6)=3$ by \Cref{c1}. Color $v_2$ and $v_6$ by $c$ and the new list of available colors $L_1$.
So, we have $|L_1(v)|\ge 7$,   
    $|L_1(v_1)|\ge 4$,  $|L_1(v_3)|\ge 4$, $|L_1(v_4)|\ge 2$, $|L_1(v_5)|\ge 3$, $|L_1(v_7)|\ge 2$, and $|L_1(v_8)|\ge 2$. Greedily color in order $v_8$, $v_7$, $v_5$, $v_4$, $v_3$, $v_1$, and $v$ in $G^2$ to obtain $\chi_2(G)\leq 10$, a contradiction. 
\end{proof}
\textbf{Final Step:} Since $L(v_2)\cap L(v_6)=\emptyset$ by \Cref{c5}, $|L(v_2)|+|L(v_6)|>8$, and $L(v_2)\subset L(v)$, we deduce that there exists $c \in L(v_6)\backslash L(v)$. By Claim \ref{c4}, we deduce that $c\notin L(v_1)\cup L(v_3)$. Color $v_6$ by $c$ and the call the new list of available colors $L_1$.
So, we have $|L_1(v)|\ge 8$,   
    $|L_1(v_1)|\ge 5$,  $|L_1(v_2)|\ge 4$, $|L_1(v_3)|\ge 5$, $|L_1(v_4)|\ge 2$, $|L_1(v_5)|\ge 2$, $|L_1(v_7)|\ge 2$, and $|L_1(v_8)|\ge 2$. Greedily color in order $v_8$, $v_7$, $v_5$, $v_4$, $v_3$, $v_2$, $v_1$, and $v$ in $G^2$ to obtain $\chi_2(G)\leq 10$, a contradiction. 
\end{proof}

\begin{figure}[htbp]
    \centering
    \begin{tikzpicture}[
        scale=0.8,
        vertex/.style={circle, fill=black, inner sep=2pt},
        line/.style={thick}
    ]
        \node[vertex, label=below left:$v_4$]  (v4) at (0, 0) {};
        \node[vertex, label=below:$v_3$]       (v3) at (4, 0) {};
        \node[vertex, label=below right:$v_2$] (v2) at (6, 0) {};

        \node[vertex, label=above left:$v_5$]  (v5) at (0, 2) {};
        \node[vertex, label=above left:$v_6$]  (v6) at (2, 2) {};
        \node[vertex, label=below right:$v$]   (v)  at (4, 2) {};
        \node[vertex, label=right:$v_1$]       (v1) at (6, 2) {};

        \node[vertex, label=above left:$v_7$]  (v7) at (2, 4) {};
        \node[vertex, label=above right:$v_8$] (v8) at (6, 4) {};

        \draw[line] (v4) -- (v3) -- (v2);
        \draw[line] (v5) -- (v6) -- (v) -- (v1);
        \draw[line] (v7) -- (v8);

        \draw[line] (v4) -- (v5);
        \draw[line] (v6) -- (v7);
        \draw[line] (v3) -- (v);
        \draw[line] (v2) -- (v1) -- (v8);
    \end{tikzpicture}
    \caption{Illustration of \Cref{l21} and \ref{l22}}
    \label{f3}
\end{figure}

 \begin{lemma} \label{l22}
    Let $v$ be a $3(1)$-vertex. Then, $v$ is incident to at most one $5$-face.
\end{lemma}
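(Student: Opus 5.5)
We argue by contradiction, reusing the configuration of \Cref{l21} (see \Cref{f3}). Let $v$ be a $3(1)$-vertex incident to the $4$-face $f=vv_1v_2v_3$. By \Cref{l9}, $v$ is incident to no other $4$-face. Suppose $v$ is incident to two $5$-faces $f'=vv_3v_4v_5v_6$ and $f''=vv_6v_7v_8v_1$. By \Cref{l8}, $d(v_1)=d(v_3)=4$ and $d(v_2)=3$. By \Cref{l21}, all neighbors of $v$ have degree $4$, so $d(v_6)=4$. Since $G$ is $4$-irregular, $d(v_4)=d(v_5)=d(v_7)=d(v_8)=3$. The situation differs from \Cref{l21} only in that $v_6$ is now a $4$-vertex. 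The extra colored neighbors of $v_6$ in $G^2$ shrink $|L(v_6)|$. In exchange, $v$, $v_5$ and $v_7$ gain one available color each, since $v_6$ has no $3$-vertex as a fourth neighbor.

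\textbf{Step 1 (distance facts).} First I show $d(v_2,v_6)=3$, exactly as in \Cref{c1}. $v_2$ is not adjacent to $v_6$, because $v_6$ is a $4$-vertex whose neighbors are $v, v_5, v_7$ and one more vertex $w$, and $v_2\neq w$ would otherwise create a $4$-cycle $vv_1v_2v_6$ or $vv_3v_2v_6$ containing too many $4$-vertices, contradicting \Cref{l8}. A common neighbor $v_5$ or $v_7$ would give two adjacent $4$-cycles at $v_2$, contradicting \Cref{l9}. A common neighbor $w$ would give a $4$-cycle $v_2\,?\,v_6\,w$ through two adjacent... contradicting \Cref{l8} or planarity together with $g(G)\ge 4$. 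Similarly, I expect $d(v_3,v_8)\ge 2$ and $d(v_1,v_4)\ge 2$. Adjacency would put $v$ in two $4$-cycles, and then \Cref{l10} or \Cref{l8} applies, since $d(v_4)=3$ and the resulting $4$-cycle $vv_3v_8v_1$ would contain three vertices $v,v_8,\ldots$ whose degrees violate \Cref{l8}. Planarity should also give $d(v_1,v_5)=3$ or $d(v_3,v_7)=3$.

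\textbf{Step 2 (list sizes and coloring).} Greedily color everything except the vertices of $f,f',f''$. I estimate roughly $|L(v)|\ge 8$, $|L(v_1)|,|L(v_3)|\ge 5$, $|L(v_2)|\ge 4$, $|L(v_4)|,|L(v_8)|\ge 3$, $|L(v_5)|,|L(v_7)|\ge 4$ and $|L(v_6)|\ge 5$. Then follow the same strategy as \Cref{l21}:
\begin{itemize}
\item save a color at $v$ via the pair $(v_2,v_6)$, which are at distance $3$;
\item save a color at $v_1$ or $v_3$ via a distance-$3$ pair such as $(v_3,v_8)$ or $(v_1,v_5)$;
\item greedily color in the order $v_4,v_8,v_5,v_7$, then $v_1,v_3$, then $v$.
\end{itemize}
If direct counting fails, I will use the claim-chain trick of \Cref{l21}. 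I assume successively $L(v_2)\subset L(v)$, then $L(v_1),L(v_3)\subset L(v)$, then $L(v_2)\cap L(v_6)=\emptyset$, each time deriving a contradiction from the negation. At the end, a color $c\in L(v_6)\setminus L(v)$ exists, and coloring $v_6$ with $c$ costs nothing at $v,v_1,v_3$.

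\textbf{Main obstacle.} The delicate part is the bookkeeping at $v_6$. It has a fourth neighbor outside the faces, so its list is about one smaller than in \Cref{l21}. The 3-vertices $v_4,v_8$ still have lists of size only $3$ and must be colored early. The subcase where $v_3$ and $v_8$ share a common neighbor $x$ (as in \Cref{c2}) will probably need a separate argument that includes $x$ among the uncolored vertices. I would handle it by first saving at $v$ with $(v_2,v_6)$ and then at $v_3$ with $(v_1,v_5)$, mirroring Case 1 of \Cref{c2}.
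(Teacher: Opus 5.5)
\textbf{There is a genuine gap: the template of \Cref{l21} does not transfer, because your bound on $|L(v)|$ is off by one.} The estimate $|L(v)|\ge 8$ is false, and the heuristic behind it runs the wrong way. Once $d(v_6)=4$, the fourth neighbour $w$ of $v_6$ is a coloured vertex at distance $2$ from $v$. It differs from the outer neighbours $a_1,a_3$ of $v_1,v_3$: for instance, $w=a_1$ would give a $4$-cycle $vv_1wv_6$ adjacent to $f$, contrary to \Cref{l9}. So $N_{G^2}(v)$ consists of the eight uncoloured vertices $v_1,\dots,v_8$ and the three coloured vertices $a_1,a_3,w$, and only $|L(v)|\ge 7$ is guaranteed. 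Compared with \Cref{l21}, it is $v_4,v_5,v_7,v_8$ that gain a colour: they are now forced to be $3$-vertices, so in fact $|L(v_4)|,|L(v_8)|\ge 4$. Meanwhile $v$ and $v_6$ each lose one.

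With $|L(v)|\ge 7$, every step of the \Cref{l21} template is one colour short at $v$. Giving $v_2,v_6$ a common colour leaves $|L(v)|\ge 6$ against six uncoloured $G^2$-neighbours $v_1,v_3,v_4,v_5,v_7,v_8$. In your final step, after colouring $v_6$ by $c\notin L(v)$, the vertex $v$ still has seven uncoloured $G^2$-neighbours $v_1,v_2,v_3,v_4,v_5,v_7,v_8$ and only seven colours.

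Your direct strategy fails for the same reason. After saving at $v$ with $(v_2,v_6)$ you have $|L_1(v)|\ge 6$, $|L_1(v_1)|,|L_1(v_3)|,|L_1(v_5)|,|L_1(v_7)|\ge 3$ and $|L_1(v_4)|,|L_1(v_8)|\ge 2$, so no pair is guaranteed to save at $v$ again. Saving at $v_1$ via $(v_3,v_8)$ then leaves $|L_2(v)|\ge 4$ with four uncoloured $G^2$-neighbours $v_1,v_4,v_5,v_7$. The lists $\{3,4\},\{1\},\{1,2\},\{2,3\}$ on $v_1,v_4,v_5,v_7$ and $\{1,2,3,4\}$ on $v$ have no completion.

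The missing idea is the order of the savings. The paper first saves at $v$ with $(v_4,v_8)$, which works precisely because $|L(v_4)|+|L(v_8)|\ge 8>7$. It then saves at $v_1$ with $(v_2,v_7)$, and proves $L_2(v_6)\subseteq L_2(v)$ and $L_2(v_3)\subseteq L_2(v)$. It finishes by saving at $v$ with $(v_1,v_5)$, which then costs $v_3$ and $v_6$ at most one colour each.

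\textbf{Step~1 also does not supply the distance facts the savings need.} Each saving step requires the two coloured vertices to be non-adjacent in $G^2$, i.e.\ at distance at least $3$. For $(v_3,v_8)$ and $(v_1,v_4)$ you only aim at distance at least $2$, and the reasons given are incorrect:
\begin{itemize}
\item The $4$-cycle $vv_3v_8v_1$ has exactly two $3$-vertices, so \Cref{l8} is not violated.
\item \Cref{l10} cannot be applied to $v$, which has no $3$-neighbour here. It must instead be applied to $v_8$, which would lie on $vv_1v_8v_3$ and $v_1v_2v_3v_8$ and has the $3$-neighbour $v_7$.
\item The cycle $vv_1v_2v_6$ would contain exactly two $4$-vertices, so it does not contradict \Cref{l8} either.
\item A common neighbour $w$ of $v_2$ and $v_6$ produces the $5$-cycle $vv_1v_2wv_6$ rather than a $4$-cycle, so the step ``$v_2\,?\,v_6\,w$'' is not an argument.
\end{itemize}
In the paper, the three distance facts actually used, $d(v_2,v_7)=3$, $d(v_4,v_8)\ge 3$ and $d(v_1,v_5)=3$, are separate claims. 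Excluding a common neighbour $x$ there requires further colourings of the configuration. For $(v_2,v_7)$, for example, one rules out $x=v_6$, then shows $d(x)=4$, then splits on $d(x,v_5)$. These claims, together with the colouring sequence above, are what your proposal is missing.
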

\begin{proof}
  Suppose, to the contrary, that $v$ is incident to two $5$-faces.
Let $f=vv_1v_2v_3$ be the $4$-face incident to $v$. Let $f'=vv_3v_4v_5v_6$ and $f''=vv_6v_7v_8v_1$ be the $5$-faces incident to $v$ (See \Cref{f3}). By Lemma \ref{l8}, we have $d(v_1)=d(v_3)=4$ and $d(v_2)=3$. By Lemma \ref{l21}, $d(v_6)=4$. Since $G$ is $4$-irregular, we have $d(v_4)=d(v_5)=d(v_7)=d(v_8)=3$.  
\begin{Claim} \label{cc1}
    $d(v_2,v_7)=3$.
\end{Claim}
\begin{proof}
    Suppose, to the contrary, that $d(v_2,v_7)\leq 2$. By Lemma \ref{l9}, we deduce that $d(v_2,v_7)=2$. Let $x\in N(v_2)\cap N(v_7)$. Since $g(G)\ge 4$, we deduce that $x\notin \{v_1,v_8\}$. Moreover, $x\neq v_3$ since otherwise $v$ is contained in two adjacent $4$-cycles $vv_1v_2v_3$ and $vv_6v_7v_3$, a contradiction by Lemma \ref{l9}. 
    
    We will show that $x\neq v_6$.
    Suppose that $x=v_6$. Then, $v$, $v_1$, $v_2$, $v_3$, and $v_6$ are contained in two $4$-cycles. Greedily color all the vertices in $G^2$ using 10 colors except the vertices incident to $f$, $f'$, and $f''$. So, we have $|L(v)|\ge 8$, $|L(v_1)|\ge 6$, $|L(v_2)|\ge 8$, $|L(v_3)|\ge 6$, $|L(v_4)|\ge 4$, $|L(v_5)|\ge 5$, $|L(v_6)|\ge 8$, $|L(v_7)|\ge 5 $, and $|L(v_8)|\ge 4$. Since $G$ is planar with $g(G) \ge 4$, we deduce that $d(v_3,v_8)=3$. Since $|L(v_3)|+|L(v_8)|>8$, we can color $v_3$ by $c$ and $v_8$ by $c'$ such that $|L(v)\backslash \{c,c'\}|\ge 7$. Color $v_3$ by $c$ and $v_8$ by $c'$ and call the new list of available colors $L_1$. So, we have $|L_1(v)|\ge 7$, $|L_1(v_1)|\ge 4$, $|L_1(v_2)|\ge 6$, $|L_1(v_4)|\ge 3$, $|L_1(v_5)|\ge 4$, $|L_1(v_6)|\ge 6$, and $|L_1(v_7)|\ge 4 $. Greedily color in order $v_4$, $v_5$, $v_7$, $v_1$, $v_6$, $v_2$, and $v$ in $G^2$ to obtain $\chi_2(G)\leq 10$, a contradiction. Therefore, $x$ is not incident to $f$, $f'$, or $f''$.

We will show that $d(x)=4$. Suppose, to the contrary, that $d(x)=3$. Greedily color all the vertices in $G^2$ using 10 colors except $x$ and the vertices incident to $f$, $f'$, and $f''$. So, we have $|L(v)|\ge 7$, $|L(v_1)|\ge 6$, $|L(v_2)|\ge 7$, $|L(v_3)|\ge 6$, $|L(v_4)|\ge 4$, $|L(v_5)|\ge 4$, $|L(v_6)|\ge 6$, $|L(v_7)|\ge 7$, $|L(v_8)|\ge 5$, and $|L(x)|\ge 6$. By Lemma \ref{l8}, we deduce that $v_2 \notin N(v_5)$. Hence, since $G$ is planar with $g(G)\ge 4$, we deduce that $d(v_1,v_5)=3$. Since $|L(v_1)|+|L(v_5)|>7$, we can color $v_1$ by $c$ and $v_5$ by $c'$ such that $|L(v_7)\backslash \{c,c'\}|\ge 6$. Color $v_1$ by $c$ and $v_5$ by $c'$ and call the new list of available colors $L_1$. So, we have $|L_1(v)|\ge 5$, $|L_1(v_2)|\ge 6$, $|L_1(v_3)|\ge 4$, $|L_1(v_4)|\ge 3$, $|L_1(v_6)|\ge 4$, $|L_1(v_7)|\ge 6$, $|L_1(v_8)|\ge 4$, and $|L_1(x)|\ge 5$. Greedily color in order $v_4$, $v_8$, $v_6$, $v_3$, $v$, $v_2$, $x$, and $v_7$ to obtain that $\chi_2(G)\leq 10$, a contradiction. So, $d(x)=4$.
\\

Now, we will study two cases depending on $d(x,v_5)$.
\begin{itemize}
    \item \textbf{Case 1:} $d(x,v_5)=3$. \\
    Greedily color all the vertices in $G^2$ using 10 colors except $x$ and the vertices incident to $f$, $f'$, and $f''$. So, we have $|L(v)|\ge 7$, $|L(v_1)|\ge 6$, $|L(v_2)|\ge 6$, $|L(v_3)|\ge 6$, $|L(v_4)|\ge 4$, $|L(v_5)|\ge 4$, $|L(v_6)|\ge 6$, $|L(v_7)|\ge 6$, $|L(v_8)|\ge 5$, and $|L(x)|\ge 4$. Since $d(x,v_5)=3$ and $|L(x)|+|L(v_5)|>6$, we can  color $x$ by $c$ and $v_5$ by $c'$ such that $|L(v_7)\backslash \{c,c'\}|\ge 5$. Color $x$ by $c$ and $v_5$ by $c'$ and call the new list of available colors $L_1$.
    
    So, we have $|L_1(v)|\ge 6$, $|L_1(v_1)|\ge 5$, $|L_1(v_2)|\ge 5$, $|L_1(v_3)|\ge 4$, $|L_1(v_4)|\ge 3$, $|L_1(v_6)|\ge 4$, $|L_1(v_7)|\ge 5$, and $|L_1(v_8)|\ge 4$. Since $G$ is planar with $g(G)\ge 4$, we deduce that $d(v_4,v_8)\ge 3$. Since $|L_1(v_4)|+|L_1(v_8)|>6$, we can color $v_4$ by $c_1$ and $v_8$ by $c_2$ such that $|L_1(v)\backslash \{c_1,c_2\}|\ge 5$. Color $v_4$ by $c_1$ and $v_8$ by $c_2$ and call the new list of available colors $L_2$.  

    So, we have $|L_2(v)|\ge 5$, $|L_2(v_1)|\ge 4$, $|L_2(v_2)|\ge 3$, $|L_2(v_3)|\ge 3$, $|L_2(v_6)|\ge 2$, and $|L_2(v_7)|\ge 4$. Since $g(G)\ge 4$, we deduce that $d(v_2,v_6)=3$. Since $|L_2(v_2)|+|L_2(v_6)|>4$, we can color $v_2$ by $c_3$ and $v_6$ by $c_4$ such that $|L_2(v_7)\backslash \{c_3,c_4\}|\ge 3$. Color $v_4$ by $c_3$ and $v_8$ by $c_4$ and call the new list of available colors $L_3$.  
    
    So, we have $|L_3(v)|\ge 3$, $|L_3(v_1)|\ge 2$, $|L_3(v_3)|\ge 1$, and $|L_3(v_7)|\ge 3$. Greedily color in order $v_3$, $v_1$, $v$, and $v_7$ in $G^2$ to obtain $\chi_2(G)\leq 10$, a contradiction.
    \item \textbf{Case 2:} $d(x,v_5)\leq 2$.
    \\
    Note that $v_4\notin N(x)$ since otherwise $v_2$ and $v_3$ are contained in two adjacent $4$-cycles, a contradiction by \Cref{l9}.
    So, either $v_5$ has a second $3$-neighbor or $v_5$ is contained in a $4$-cycle. Moreover, $v_2\notin N(v_5)$ since $g(G)\ge 4$. By \Cref{l8}, we deduce that $v_2\notin N(v_5)$.
    
    Greedily color all the vertices in $G^2$ using 10 colors except $x$ and the vertices incident to $f$, $f'$, and $f''$. So, we have $|L(v)|\ge 7$, $|L(v_1)|\ge 6$, $|L(v_2)|\ge 6$, $|L(v_3)|\ge 6$, $|L(v_4)|\ge 4$, $|L(v_5)|\ge 6$, $|L(v_6)|\ge 6$, $|L(v_7)|\ge 6$, $|L(v_8)|\ge 5$, and $|L(x)|\ge 5$. By \Cref{l9}, we deduce that $x\notin N(v_4)$.
Hence, since $G$ is planar, we deduce that $d(v_4,v_7)=3$. Since $|L(v_4)|+|L(v_7)|>6$, we can color $v_4$ by $c$ and $v_7$ by $c'$ such that $|L(v_5)\backslash \{c,c'\}|\ge 5$. Color $v_4$ by $c$ and $v_7$ by $c'$ and call the new list of available colors $L_1$.
    
    So, we have $|L_1(v)|\ge 5$, $|L_1(v_1)|\ge 5$, $|L_1(v_2)|\ge 4$, $|L_1(v_3)|\ge 5$, $|L_1(v_5)|\ge 5$, $|L_1(v_6)|\ge 4$, $|L_1(v_8)|\ge 4$, and $|L_1(x)|\ge 4$. Since $g(G)\ge 4$, we have $d(v_2,v_6)=3$. Since $|L_1(v_2)|+|L_1(v_6)|>5$, we can color $v_2$ by $c_1$ and $v_6$ by $c_2$ such that $|L_1(v)\backslash \{c_1,c_2\}|\ge 4$. Color $v_2$ by $c_1$ and $v_6$ by $c_2$ and call the new list of available colors $L_2$. 

    So, we have $|L_2(v)|\ge 4$, $|L_2(v_1)|\ge 3$, $|L_2(v_3)|\ge 3$, $|L_2(v_5)|\ge 4$, $|L_2(v_8)|\ge 2$, and $|L_2(x)|\ge 2$. Greedily color in order $x$, $v_8$, $v_1$, $v_3$, $v$, and $v_5$ in $G^2$ to obtain $\chi_2(G)\leq 10$, a contradiction. \qedhere
\end{itemize}
\end{proof}
\begin{Claim} \label{cc2}
    $d(v_4,v_8)\ge 3$.
\end{Claim}
\begin{proof}
    Suppose, to the contrary, that $d(v_4,v_8)\leq 2$. We will show that $d(v_4,v_8)=2$. Suppose that $d(v_4,v_8)=1$. Greedily color all the vertices in $G^2$ using 10 colors the vertices incident to $f$, $f'$, and $f''$. So, we have $|L(v)|\ge 7$, $|L(v_1)|\ge 6$, $|L(v_2)|\ge 4$, $|L(v_3)|\ge 6$, $|L(v_4)|\ge 8$, $|L(v_5)|\ge 5$, $|L(v_6)|\ge 5$, $|L(v_7)|\ge 5$, and $|L(v_8)|\ge 8$.
    By \Cref{cc1}, $d(v_2,v_7)=3$. Since $|L(v_2)|+|L(v_7)|>8$, we can color $v_2$ by $c$ and $v_7$ by $c'$ such that $|L(v_8)\backslash \{c,c'\}|\ge 7$. Color $v_2$ by $c$ and $v_7$ by $c'$ and call the new list of available colors $L_1$.
So, we have $|L_1(v)|\ge 5$, $|L_1(v_1)|\ge 4$, $|L_1(v_3)|\ge 5$, $|L_1(v_4)|\ge 6$, $|L_1(v_5)|\ge 4$, $|L_1(v_6)|\ge 4$, and $|L_1(v_8)|\ge 7$.
   Greedily color in order $v_6$, $v_5$, $v_1$, $v_3$, $v$, $v_4$, and $v_8$ in $G^2$ to obtain $\chi_2(G)\leq 10$, a contradiction. Hence, $d(v_4,v_8)=2$.

   Let $x\in N(v_4)\cap N(v_8)$. By Lemma \ref{l8}, we deduce that $x\neq \{v_5,v_7\}$. 
   Suppose that $x=v_1$. Then, $v_4$ is a $3$-vertex contained in two $4$-cycles $v_1v_2v_3v_4$ and $vv_1v_2v_3$ and has a $3$-neighbor $v_5$, a contradiction by Lemma \ref{l10}. Similarly, $x\neq v_3$. Thus, $x$ is not incident to $f$, $f'$, or $f''$. 

Greedily color all the vertices in $G^2$ using 10 colors except the vertices incident to $f$, $f'$, and $f''$. So, we have $|L(v)|\ge 7$, $|L(v_1)|\ge 5$, $|L(v_2)|\ge 4$, $|L(v_3)|\ge 5$, $|L(v_4)|\ge 5$, $|L(v_5)|\ge 4$, $|L(v_6)|\ge 5$, $|L(v_7)|\ge 4$, and $|L(v_8)|\ge 5$. 
By \Cref{cc1}, $d(v_2,v_7)=3$. Since $|L(v_2)|+|L(v_7)|>5$, we can color $v_2$ by $c$ and $v_7$ by $c'$ such that $|L(v_8)\backslash \{c,c'\}|\ge 4$. Color $v_2$ by $c$ and $v_7$ by $c'$ and call the new list of available colors $L_1$.

So, we have $|L_1(v)|\ge 5$, $|L_1(v_1)|\ge 3$, $|L_1(v_3)|\ge 4$, $|L_1(v_4)|\ge 4$, $|L_1(v_5)|\ge 3$, $|L_1(v_6)|\ge 4$, and $|L_1(v_8)|\ge 4$. Since $G$ is planar with $g(G) \ge 4$, we deduce that $d(v_1,v_5)=3$.  
 Since $|L_1(v_1)|+|L_1(v_5)|>5$, we can color $v_1$ by $c_1$ and $v_5$ by $c_2$ such that $|L_1(v)\backslash \{c_1,c_2\}|\ge 4$. Color $v_1$ by $c$ and $v_5$ by $c'$ and call the new list of available colors $L_2$.

 So, we have $|L_2(v)|\ge 4$, $|L_2(v_3)|\ge 2$, $|L_2(v_4)|\ge 3$, $|L_2(v_6)|\ge 2$, and $|L_2(v_8)|\ge 3$. Since $g(G)\ge 4$, we deduce that $d(v_3,v_8)=3$.

    We will show that $L_2(v_6)\subset L_2(v)$. 
     Suppose that $L_2(v_6)\nsubseteq L_2(v)$. Color $v_6$ by $i\in L_2(v_6)\backslash L_2(v)$ and call the new list $L_3$. So, we have $|L_3(v)|\ge 4$, $|L_3(v_3)|\ge 1$, $|L_3(v_4)|\ge 2$, and $|L_3(v_8)|\ge 2$.
     Greedily color in order $v_3$, $v_4$, $v_8$, and $v$ in $G^2$ to obtain $\chi_2(G)\leq 10$, a contradiction. Hence, $L_2(v_6)\subset L_2(v)$.

     We will now show that $L_2(v_4)\subset L_2(v)$. 
     Suppose that $L_2(v_4)\nsubseteq L_2(v)$. 
     Color $v_4$ by $i\in L_2(v_4)\backslash L_2(v)$ and call the new list $L_3$. Since $L_2(v_6)\subset L_2(v)$, $i\notin L_2(v_6)$. So, we have $|L_3(v)|\ge 4$, $|L_3(v_3)|\ge 1$, $|L_3(v_6)|\ge 2$, and $|L_3(v_8)|\ge 2$.
     Greedily color in order $v_3$, $v_6$, $v_8$, and $v$ in $G^2$ to obtain $\chi_2(G)\leq 10$, a contradiction. Hence, $L_2(v_4)\subset L_2(v)$.
     
     Recall that $d(v_3,v_8)=3$. Therefore, since $|L_2(v_3)|+|L(v_8)|>4$ and $L_2(v_4)\cup L_2(v_6)\subset L(v)$, we can color $v_3$ by $c_3$ and $v_8$ by $c_4$ such that $|L_2(v)\backslash \{c_3,c_4\}|\ge 3$, $|L_2(v_4)\backslash \{c_3,c_4\}|\ge 2$, and $|L_2(v_6)\backslash \{c_3,c_4\}|\ge 1$. Color $v_3$ by $c_3$ and $v_8$ by $c_4$ then greedily color in order $v_6$, $v_4$, and $v$ in $G^2$ to obtain $\chi_2(G)\leq 10$, a contradiction.
\end{proof}
\begin{Claim} \label{cc3}
    $d(v_1,v_5)=3$.
\end{Claim}
\begin{proof}
    Suppose, to the contrary, that $d(v_1,v_5)\leq 2$. Suppose that $d(v_1,v_5)=1$. Then, $v_1$ is contained in two adjacent $4$-cycle $vv_1v_2v_3$ and $vv_1v_5v_6$, a contradiction by Lemma \ref{l9}. Thus, $d(v_1,v_5)=2$. By \Cref{l9}, we deduce that $x\neq v_2$. By \Cref{l8}, we deduce that $x\neq v_8$.
Suppose that $x=v_4$. Then, $v_4$ is contained in two $4$-cycles $v_1v_2v_3v_4$ and $vv_1v_4v_3$ and has a $3$-neighbor $v_5$, a contradiction by Lemma \ref{l10}. Thus, $x$ is not incident to $f$, $f'$, or $f''$.
Since $G$ is $4$-irregular, $d(x)=3$.

    Greedily color all the vertices in $G^2$ using 10 colors except the vertices incident to $f$, $f'$, and $f''$. So, we have $|L(v)|\ge 7$, $|L(v_1)|\ge 6$, $|L(v_2)|\ge 4$, $|L(v_3)|\ge 5$, $|L(v_4)|\ge 4$, $|L(v_5)|\ge 6$, $|L(v_6)|\ge 5$, $|L(v_7)|\ge 4$, and $|L(v_8)|\ge 4$. Since $G$ is planar with $g(G)\ge 4$, we deduce that $d(v_4,v_7)=3$. Hence, since $|L(v_4)|+|L(v_7)|>6$, we can color $v_4$ by $c$ and $v_7$ by $c'$ such that $|L(v_5)\backslash \{c,c'\}|\ge 5$. Color $v_4$ by $c$ and $v_7$ by $c'$ and call the new list of available colors $L_1$. 

So, we have $|L_1(v)|\ge 5$, $|L_1(v_1)|\ge 5$, $|L_1(v_2)|\ge 3$, $|L_1(v_3)|\ge 4$, $|L_1(v_5)|\ge 5$, $|L_1(v_6)|\ge 3$, and $|L_1(v_8)|\ge 3$.
    Since $G$ is $4$-irregular and planar with $g(G)\ge 4$, we deduce that $d(v_2,v_6)=3$. Since $|L_1(v_2)|+|L_1(v_6)|>5$, we can color $v_2$ by $c_1$ and $v_6$ by $c_2$ such that $|L_1(v)\backslash \{c_1,c_2\}|\ge 4$. Color $v_2$ by $c_1$ and $v_6$ by $c_2$ and call the new list of available colors $L_2$.

So, we have $|L_2(v)|\ge 4$, $|L_2(v_1)|\ge 3$,  $|L_2(v_3)|\ge 2$, $|L_2(v_5)|\ge 4$,  and $|L_2(v_8)|\ge 1$. Greedily color in order $v_8$, $v_3$, $v_1$, $v$, and $v_5$ to obtain $\chi_2(G)\leq 10$, a contradiction.
\end{proof}
 Now, greedily color all the vertices in $G^2$ using 10 colors except the vertices incident to $f$, $f'$, and $f''$. So, we have $|L(v)|\ge 7$, $|L(v_1)|\ge 5$, $|L(v_2)|\ge 4$, $|L(v_3)|\ge 5$, $|L(v_4)|\ge 4$, $|L(v_5)|\ge 4$, $|L(v_6)|\ge 5$, $|L(v_7)|\ge 4$, and $|L(v_8)|\ge 4$. By \Cref{cc2}, $d(v_4,v_8)\ge 3$. Since $|L(v_4)|+|L(v_8)|>7$, we can color $v_4$ by $c$ and $v_8$ by $c'$ such that $|L(v)\backslash \{c,c'\}|\ge 6$. Color $v_4$ by $c$ and $v_8$ by $c'$ and call the new list of available colors $L_1$. 

So, we have $|L_1(v)|\ge 6$, $|L_1(v_1)|\ge 4$, $|L_1(v_2)|\ge 2$, $|L_1(v_3)|\ge 4$, $|L_1(v_5)|\ge 3$, $|L_1(v_6)|\ge 3$, and $|L_1(v_7)|\ge 3$. By \Cref{cc1}, $d(v_2,v_7)= 3$. Since $|L_1(v_2)|+|L_1(v_7)|>4$, we can color $v_2$ by $c_1$ and $v_7$ by $c_2$ such that $|L_1(v_1)\backslash \{c_1,c_2\}|\ge 3$. Color $v_2$ by $c_1$ and $v_7$ by $c_2$ and call the new list of available colors $L_2$. 

So, we have $|L_2(v)|\ge 4$, $|L_2(v_1)|\ge 3$,  $|L_2(v_3)|\ge 3$, $|L_2(v_5)|\ge 2$, and $|L_2(v_6)|\ge 2$. 
\begin{Claim} \label{cc5}
    $L_2(v_6)\subset L_2(v)$.
\end{Claim}
\begin{proof}
    Suppose, to the contrary, that $L_2(v_6)\nsubseteq L_2(v)$. Color $v_6$ by $i\in L_2(v_6)\backslash L_2(v)$ and call the new list of available colors $L_3$. So, we have $|L_3(v)|\ge 4$, $|L_3(v_1)|\ge 2$, $|L_3(v_3)|\ge 2$, and $|L_3(v_5)|\ge 1$. Greedily color in order $v_5$, $v_3$, $v_1$, and $v$ in $G^2$ to obtain $\chi_2(G) \leq 10$, a contradiction.
\end{proof}

\begin{Claim} \label{cc6}
    $L_2(v_3)\subset L_2(v)$.
\end{Claim}
\begin{proof}
    Suppose, to the contrary, that $L_2(v_3)\nsubseteq L_2(v)$. Color $v_3$ by $i\in L_2(v_3)\backslash L_2(v)$ and call the new list of available colors $L_3$. Since $L_2(v_6)\subset L_2(v)$ by \Cref{cc5}, we deduce that $i\notin L_2(v_6)$. So, we have $|L_3(v)|\ge 4$, $|L_3(v_1)|\ge 2$, $|L_3(v_5)|\ge 1$, and $|L_3(v_6)|\ge 2$. Greedily color in order $v_5$, $v_6$, $v_1$, and $v$ in $G^2$ to obtain $\chi_2(G) \leq 10$, a contradiction.
\end{proof}
\textbf{Final Step:} By  \Cref{cc3}, we have $d(v_1,v_5)=3$. Since $|L_2(v_1)|+|L_2(v_5)|>4$, we can color $v_1$ by $c_3$ and $v_5$ by $c_4$ such that $|L_2(v)\backslash \{c_3,c_4\}|\ge 3$. Since $L_2(v_3)\cup L_2(v_6)\subset L_2(v)$ by \Cref{cc5} and \ref{cc6}, we deduce that $|L_2(v_3)\backslash \{c_3,c_4\}|\ge 2$ and $|L_2(v_6)\backslash \{c_3,c_4\}|\ge 1$. Color $v_1$ by $c_3$ and $v_5$ by $c_4$ and call the new list of available colors $L_3$. So, we have $|L_3(v)|\ge 3$, $|L_3(v_3)|\ge 2$, and $|L_3(v_6)|\ge 1$. Greedily color in order $v_6$, $v_3$, and $v$ in $G^2$ to obtain $\chi_2(G)\leq 10$, a contradiction.
\end{proof}
    
\begin{lemma} \label{l23}
    Let $v$ be a $3(1)$-vertex incident to a $6$-face $f$ Then, $f$ is incident to at most four vertices of degree 3.
\end{lemma}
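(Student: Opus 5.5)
Suppose, to the contrary, that the $6$-face $f=vu_1u_2u_3u_4u_5$ contains at least five $3$-vertices. Let $f_0=vv_1v_2v_3$ be the $4$-face incident to $v$. By \Cref{l8}, $d(v_1)=d(v_3)=4$ and $d(v_2)=3$. Since $f$ and $f_0$ share an edge at $v$, without loss of generality $u_1=v_3$, so $d(u_1)=4$. By $4$-irregularity and \Cref{l2}, the other vertices of $f$ have degree $3$ or $4$, with no two $4$-vertices adjacent and no $2$-vertices. So the five $3$-vertices are exactly $v,u_2,u_3,u_4,u_5$. In particular $u_5$ is a $3$-neighbor of $v$.

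First we use the earlier structure. Since $v$ has the $3$-neighbor $u_5$, \Cref{l10} shows that $v$ lies in only one $4$-cycle. By \Cref{l22}, $v$ is incident to at most one $5$-face, and by \Cref{l9} the third face at $v$, namely the one containing $vv_1$ and $vu_5$, is a $5^+$-face. Along the path $u_2u_3u_4u_5v$ all vertices have degree $3$. So \Cref{l11} applies to each inner adjacent pair: for instance $u_3,u_4$ each have two $3$-neighbors on $f$, which forces their third neighbors to be $4$-vertices. The same argument shows the third neighbor of $u_5$ is a $4$-vertex. Planarity and $g(G)\ge 4$ then rule out chords of $f$ and short paths between $v_2$ and $u_3,u_4$. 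We aim to prove $d(v_2,u_4)=3$ and $d(v_1,u_3)=3$. Where this fails, we locate a common neighbor $x$ and reduce to an extra configuration, as in the claims of \Cref{l21,l22}.

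Next comes the coloring. By \Cref{r1}, greedily color everything outside $V(f)\cup V(f_0)$, possibly together with such an $x$. We then count the lists. Each $3$-vertex of $f$ has most of its $G^2$-neighbours uncolored and so gets a large list, e.g. $|L(v)|\ge 7$ and $|L(u_3)|,|L(u_4)|\ge 4$ or more. The $4$-vertices get smaller lists, e.g. $|L(v_1)|\ge 3$ and $|L(u_1)|\ge 5$. Now we save colors as in the introduction, using pairs at distance $3$. First color $v_2$ and $u_4$ so as to lose only one color at $v$ (or at $u_3$). Then color $v_1$ and $u_3$ (or $u_2,u_5$) so as to lose one color at the vertex with the tightest count. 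After that we finish greedily, in an order that starts with the $4$-vertices $v_1$, $u_1$ and ends with $v$, which has the largest list.

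The main obstacle is the distance-$3$ claims. Each time a savable pair is actually at distance $2$ through an outside vertex $x$, we must split further on $d(x)$ and on whether $x$ creates a forbidden $4$-cycle (\Cref{l8,l9,l10}). If a simple saving is not enough, we fall back on the subset arguments of \Cref{l21}: show $L(u_5)\subset L(v)$, then find a color in $L(\cdot)\setminus L(v)$ to save.
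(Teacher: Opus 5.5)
Your first step, colouring $v_2$ and $u_4$ so that $v$ loses only one colour, is exactly the paper's argument when $d(v_2,u_4)=3$. In that case a single saving already suffices. The lists are $|L(v)|\ge 6$ (not $7$), $|L(v_1)|,|L(v_2)|\ge 3$, $|L(u_1)|,|L(u_5)|\ge 5$ and $|L(u_2)|,|L(u_3)|,|L(u_4)|\ge 4$. The saving followed by the greedy order $v_1,u_1,u_2,u_3,u_5,v$ works.

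There is a genuine gap in the complementary case, which you leave unresolved. Contrary to your remark, planarity and $g(G)\ge 4$ do not force $d(v_2,u_4)=3$. By Lemma~\ref{l11} the third neighbour of $u_4$ is a $4$-vertex, and it can coincide with the third neighbour $x$ of $v_2$: the path $v_2xu_4$ simply runs around $u_1,u_2,u_3$ outside $f\cup f_0$. For this case you only say you will \emph{reduce to an extra configuration} or use subset arguments as in Lemmas~\ref{l21} and~\ref{l22}. That is not an argument.

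The missing idea is short: when such an $x$ exists, save at $v$ with the pair $(v_2,u_5)$ instead. Here $N(v_2)=\{v_1,u_1,x\}$. The vertices $v_1,u_1$ are adjacent to $v$, and $x$ is adjacent to $u_4$; both $v$ and $u_4$ are neighbours of $u_5$. So any common neighbour of $v_2$ and $u_5$ would close a triangle. Also $v_2u_5\notin E(G)$, because the third neighbour of $u_5$ is a $4$-vertex (Lemma~\ref{l11}). Hence $d(v_2,u_5)=3$, and both vertices are $G^2$-neighbours of $v$. One saving at $v$, followed by the greedy order $v_1,u_1,u_2,u_3,u_4,v$, then finishes; the extra $G^2$-edge $v_2u_4$ only enlarges the lists of $v_2$ and $u_4$. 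This two-case split on $d(v_2,u_4)$, with one saving at $v$ in each case, is the whole of the paper's proof.

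Your planned second saving with $(v_1,u_3)$ is therefore unnecessary. Its premise $d(v_1,u_3)=3$ is not justified anyway: the third neighbour of $u_3$ is a $4$-vertex, and nothing established so far rules out that it is $v_1$ itself.
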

\begin{proof}
    Let $v$ be a $3(1)$-vertex incident to a $6$-face $f$ and to a $4$-face $f'$. Suppose, to the contrary, that $f$ is incident to at least five vertices of degree $3$. Set $f'=vv_1v_2v_3$ and $f=vv_3v_4v_5v_6v_7$. Then, $d(v_1)=d(v_3)=4$ and   $d(v_2)=d(v_4)=d(v_5)=d(v_6)=d(v_7)=3$.
    We will study two cases depending on $d(v_2,v_6)$.
    \begin{itemize}
        \item \textbf{Case 1:} $d(v_2,v_6)\leq 2$. 
        \\
        By Lemma \ref{l11}, we deduce that $d(v_2,v_6)=2$. 
        Let $x\in N(v_2)\cap N(v_6)$. By Lemma \ref{l9}, we deduce that $x\notin \{v_1,v_3,v_5\}$. By \Cref{l11}, we deduce that $x\neq v_7$. Hence, since $g(G)\ge 4$, we deduce that $x$ is not incident to $f$ or $f'$.
Greedily color all the vertices in $G^2$ except the vertices incident to $f$ and $f'$. So, we have $|L(v)|\ge 6$, $|L(v_1)|\ge 3$,  $|L(v_2)|\ge 4$, $|L(v_3)|\ge 5$, $|L(v_4)|\ge 4$, $|L(v_5)|\ge 4$, $|L(v_6)|\ge 5$, and  $|L(v_7)|\ge 5$. Since $g(G)\ge 4$, we deduce that $d(v_2,v_7)=3$. Since $|L(v_2)|+|L(v_7)|>6$, we can color $v_2$ by $c$ and $v_7$ by $c'$ such that $|L(v)\backslash \{c,c'\}|\ge 5$. Color $v_2$ by $c$ and $v_7$ by $c'$ and call the new list of available colors $L_1$. 

So, we have $|L_1(v)|\ge 5$,  $|L_1(v_1)|\ge 1$, $|L_1(v_3)|\ge 3$, $|L_1(v_4)|\ge 3$, $|L_1(v_5)|\ge 3$, and  $|L_1(v_6)|\ge 3$. Greedily color in order $v_1$, $v_3$, $v_4$, $v_5$, $v_6$, and $v$ in $G^2$ to obtain $\chi_2(G)\leq 10 $, a contradiction.
        \item \textbf{Case 2:} $d(v_2,v_6)\ge 3$. \\
        Greedily color all the vertices in $G^2$ except the vertices incident to $f$ and $f'$. So, we have $|L(v)|\ge 6$, $|L(v_1)|\ge 3$,  $|L(v_2)|\ge 3$, $|L(v_3)|\ge 5$, $|L(v_4)|\ge 4$, $|L(v_5)|\ge 4$, $|L(v_6)|\ge 4$, and  $|L(v_7)|\ge 5$. Since $d(v_2,v_6)=3$ and $|L(v_2)|+|L(v_6)|>6$, we can color $v_2$ by $c$ and $v_7$ by $c'$ such that $|L(v)\backslash \{c,c'\}|\ge 5$. Color $v_2$ by $c$ and $v_6$ by $c'$ and call the new list of available colors $L_1$. 
        
        So, we have $|L_1(v)|\ge 5$,  $|L_1(v_1)|\ge 2$, $|L_1(v_3)|\ge 4$, $|L_1(v_4)|\ge 2$, $|L_1(v_5)|\ge 3$, and  $|L_1(v_7)|\ge 4$. Greedily color in order $v_1$, $v_4$, $v_5$, $v_3$, $v_7$, and $v$ in $G^2$ to obtain $\chi_2(G)\leq 10 $, a contradiction. \qedhere
    \end{itemize}
\end{proof}
\begin{lemma} \label{l24}
    Let $v$ be a weak $3(1)$-vertex incident to a $6$-face $f$ Then, $f$ is incident to at most three vertices of degree 3.
\end{lemma}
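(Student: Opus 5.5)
We argue by contradiction, following the pattern of \Cref{l23}. Let $v$ be a weak $3(1)$-vertex with incident $4$-face $f'=vv_1v_2v_3$. By \Cref{l22}, $v$ is incident to exactly one $5$-face $f''$, so its three faces are $f'$, $f''$ and the $6$-face $f$. Without loss of generality write $f=vv_3v_4v_5v_6v_7$ and $f''=vv_7v_8v_9v_1$.

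\textbf{Degree bookkeeping.} By \Cref{l8}, $d(v_1)=d(v_3)=4$ and $d(v_2)=3$. Since $G$ is $4$-irregular and $v_3$ is a $4$-vertex, $d(v_4)\le 3$. By \Cref{l7}, $v_3$ has no $2$-neighbor, so $d(v_4)=3$. Likewise $v_9$ is a neighbor of the $4$-vertex $v_1$, so $d(v_9)=3$. The vertex $v_7$ is adjacent to the $3$-vertex $v$, so by \Cref{l2} it is not a $2$-vertex, and $d(v_8)\le 3$ or $d(v_7)\le 3$ by irregularity.

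Suppose $f$ has at least four $3$-vertices. \Cref{l23} bounds this by four, so exactly four: $v$, $v_4$, and exactly two of $v_5,v_6,v_7$. A $2$-vertex among $v_5,v_6$ forces both its neighbors on $f$ to be $4$-vertices (\Cref{l2}), which leaves too few $3$-vertices. Hence exactly one of $v_5,v_6,v_7$ is a $4$-vertex. This gives three cases according to which one, and in each case the degrees on $f''$ are then forced by irregularity and \Cref{l2}.

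\textbf{Coloring step in each case.} By \Cref{r1}, greedily color $G^2$ except the vertices incident to $f$, $f'$ and $f''$ (eleven or so vertices), and record the list sizes. They are large for $v$, $v_1$, $v_3$, $v_7$, which see many uncolored vertices, and small for outer vertices such as $v_5$ and $v_8$. We then save colors twice, each time using a pair of $3$-vertices at distance $3$ on opposite sides of $v$. Natural candidates are $(v_2,v_6)$ or $(v_2,v_5)$ for saving at $v$ and $v_3$, and $(v_4,v_8)$ or $(v_9,v_5)$ for saving at $v_7$ or $v_1$. After these savings we finish greedily, ordering from the smallest lists outward and coloring $v$ last. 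Where the count is tight, as in \Cref{l21} and \Cref{l22}, we argue by inclusions like $L(v_2)\subset L(v)$. If the inclusion fails, we color $v_2$ with a color outside $L(v)$ and win; if it holds, we exploit it in the final saving step.

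\textbf{Main obstacle.} The difficulty is guaranteeing that the chosen pairs really are at distance $3$. Whenever a pair such as $(v_2,v_6)$ is at distance at most $2$, we take a common neighbor $x$. Using $g(G)\ge4$, planarity, \Cref{l9} (no two adjacent $4$-cycles at a $3$-vertex), \Cref{l10} and \Cref{l11}, we locate $x$ off the three faces and show $d(x)=3$. We then include $x$ among the uncolored vertices. This raises the list sizes of its neighbors enough to redo the savings with a different pair, in the same way as \Cref{c2} and \Cref{cc1}.

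We expect the case $d(v_7)=4$ to be hardest, since then $v_8$ is a $3$-vertex adjacent to $v_9$ and the $5$-face $f''$ resembles a semi-bad configuration. There we would first try \Cref{l19}, which rules out $f''$ being bad or semi-bad, and \Cref{l12}, which says weak $3(1)$-vertices are nonadjacent, to pin down $d(v_8)$ before doing any coloring.
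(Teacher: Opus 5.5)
Your case split is correct and matches the paper's. With $d(v_1)=d(v_3)=4$ and $d(v_2)=d(v_4)=d(v_9)=3$, exactly one of $v_5,v_6,v_7$ is a $4$-vertex. The paper treats $d(v_7)=4$ (Case~1), then $d(v_7)=3$ with $d(v_5)=4$ or $d(v_6)=4$ (Subcases~2.1 and~2.2).

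\textbf{The gap.} The proposal stops where the proof starts. In no case do you fix which vertices are colored first, which color is saved at which vertex, or what the resulting list sizes are. These counts are the entire content of the lemma. Where you are specific, the details do not hold up:
\begin{itemize}
\item Your claim that the degrees on $f''$ are forced by irregularity and \Cref{l2} is false when $d(v_7)=3$. Then $v_8$ has only $3$-neighbors on $f''$, and it is \Cref{l19} ($f''$ is not semi-bad) that forces $d(v_8)=4$.
\item Conversely, when $d(v_7)=4$, $d(v_8)=3$ is immediate. Then $f''$, whose $3$-vertices are $v,v_8,v_9$, is simply good, so there is nothing to pin down with \Cref{l19} or \Cref{l12}.
\item The pair $(v_4,v_8)$ is a saving pair at $v$, as in \Cref{l22}. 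It does not save at $v_7$ or $v_1$: $v_4$ is not within distance $2$ of either through the three faces.
\item Your fallback for a close pair (take a common neighbor $x$ and show $d(x)=3$) is not a usable general device. In Case~1 it does not address $v_2v_6\in E(G)$. When $x$ exists, its known neighbors $v_2,v_6$ are $3$-vertices, so nothing you cite forces $d(x)=3$.
\end{itemize}
The paper avoids $x$ entirely in Case~1. If $d(v_2,v_6)\le 2$, then $v_2$ and $v_6$ are uncolored $G^2$-neighbors of each other, which raises both lists by one. Two single-vertex savings then finish the case: color $v_7$ without shrinking $L(v_5)$, then color $v_2$ without shrinking $L(v_6)$, then color greedily.

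\textbf{What the template misses.} The real difficulty lies in the cases $d(v_7)=3$, not $d(v_7)=4$. There the decisive savings are not between two $3$-vertices. The paper pairs the $4$-vertex $v_1$ with $v_6$ (when $d(v_5)=4$) or with $v_4$ (when $d(v_6)=4$) to save at $v$. It first proves that these pairs are at distance $3$. For $(v_1,v_4)$ this requires:
\begin{itemize}
\item \Cref{l10} to exclude the edge $v_1v_4$;
\item a separate coloring to exclude $v_5$ as the common neighbor;
\item a different saving pair, $(v_2,v_7)$ at $v$, for any other common neighbor;
\item the inclusion $L(v_2)\subset L(v)$ to make the final step go through.
\end{itemize}
Without explicit list counts there is no way to tell whether your suggested savings leave enough colors for the last vertices. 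What you have is a plan of attack in the style of \Cref{l21} and \Cref{l22}, not a proof of this lemma.
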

\begin{proof}
    Let $v$ be a weak $3(1)$-vertex incident to a $6$-face $f$ and to a $4$-face $f'$. Suppose, to the contrary, that $f$ is incident to at at least four vertices of degree $3$. By \Cref{l23}, we deduce that $f$ is incident to exactly four vertices of degree $3$. Set $f'=vv_1v_2v_3$ and $f=vv_3v_4v_5v_6v_7$. Let $f''=vv_7v_8v_9v_1$ be the $5$-face incident to $v$. Then,  $d(v_1)=d(v_3)=4$ and $d(v_2)=d(v_4)=d(v_9)=3$. By Lemma \ref{l19}, we deduce that $f''$ is a good face. We will study two cases depending on $d(v_7)$. 
    \begin{itemize}
        \item \textbf{Case 1:} $d(v_7)=4$.\\
        Then, $d(v_4)=d(v_5)=d(v_6)=d(v_8)=3$.
          We will study two cases according to $d(v_2,v_6)$.
     \begin{itemize}
     \item \textbf{Subcase 1.1:} $d(v_2,v_6)\leq 2$. \\ 
     Greedily color all the vertices in $G^2$ using $10$ colors except the vertices incident to $f$, $f'$, and $f''$. So, we have $|L(v)|\ge 7$, $|L(v_1)|\ge 5$,  $|L(v_2)|\ge 5$, $|L(v_3)|\ge 5$, $|L(v_4)|\ge 4$, $|L(v_5)|\ge 4$, $|L(v_6)|\ge 5$, $|L(v_7)|\ge 5$, $|L(v_8)|\ge 4$, and $|L(v_9)|\ge 4$. Since $|L(v_7)|>4$, we can color $v_7$ by $c$ such that $|L(v_5)\backslash \{c\}|\ge 4$. Color $v_7$ by $c$ and call the new list of available colors $L_1$.

     So, we have $|L_1(v)|\ge 6$, $|L_1(v_1)|\ge 4$,  $|L_1(v_2)|\ge 5$, $|L_1(v_3)|\ge 4$, $|L_1(v_4)|\ge 4$, $|L_1(v_5)|\ge 4$, $|L_1(v_6)|\ge 4$, $|L_1(v_8)|\ge 3$, and $|L_1(v_9)|\ge 3$.
Since $|L_1(v_2)|>4$, we can color $v_2$ by $c'$ such that $|L_1(v_6)\backslash \{c'\}|\ge 4$. Color $v_2$ by $c'$ and call the new list of available colors $L_2$.

 So, we have $|L_2(v)|\ge 5$, $|L_2(v_1)|\ge 3$, $|L_2(v_3)|\ge 3$, $|L_2(v_4)|\ge 3$, $|L_2(v_5)|\ge 4$, $|L_2(v_6)|\ge 4$, $|L_2(v_8)|\ge 3$ and $|L_2(v_9)|\ge 2$. Greedily color in order $v_9$, $v_8$, $v_1$, $v_3$, $v$, $v_4$, $v_6$, and $v_5$ in $G^2$ to obtain $\chi_2(G)\leq 10$, a contradiction.
 \item \textbf{Subcase 1.2:} 
$d(v_2,v_6)=3$.
 \\
 Greedily color all the vertices in $G^2$ using $10$ colors except the vertices incident to $f$, $f'$, and $f''$. So, we have $|L(v)|\ge 7$, $|L(v_1)|\ge 5$,  $|L(v_2)|\ge 4$, $|L(v_3)|\ge 5$, $|L(v_4)|\ge 4$, $|L(v_5)|\ge 4$, $|L(v_6)|\ge 4$, $|L(v_7)|\ge 5$, $|L(v_8)|\ge 4$, and $|L(v_9)|\ge 4$.  Since $|L(v_7)|>4$, we can color $v_7$ by $c$ such that $|L(v_5)\backslash \{c\}|\ge 4$. Color $v_7$ by $c$ and call the new list of available colors $L_1$.

     So, we have $|L_1(v)|\ge 6$, $|L_1(v_1)|\ge 4$,  $|L_1(v_2)|\ge 4$, $|L_1(v_3)|\ge 4$, $|L_1(v_4)|\ge 4$, $|L_1(v_5)|\ge 4$, $|L_1(v_6)|\ge 3$, $|L_1(v_8)|\ge 3$, and $|L_1(v_9)|\ge 3$. Since $d(v_2,v_6)\ge 3$ and $|L_1(v_2)|+|L_1(v_6)|>4$, we can color $v_2$ by $c_1$ and $v_6$ by $c_2$ such that $|L_1(v_4)\backslash \{c_1,c_2\}|\ge 3$. Color $v_2$ by $c_1$ and $v_6$ by $c_2$ and call the new list of available colors $L_2$.

     So, we have $|L_2(v)|\ge 4$, $|L_2(v_1)|\ge 3$, $|L_2(v_3)|\ge 3$, $|L_2(v_4)|\ge 3$, $|L_2(v_5)|\ge 3$, $|L_2(v_8)|\ge 2$ and $|L_2(v_9)|\ge 2$. Greedily color in order $v_9$, $v_8$, $v_1$, $v$, $v_3$, $v_4$, and $v_5$ in $G^2$ to obtain $\chi_2(G)\leq 10$, a contradiction.
     \end{itemize}
     \item \textbf{Case 2:} $d(v_7)=3$. \\
     Then, $d(v_8)=3$ since $f''$ is a good $5$-face.
  Since $f$ is incident to four vertices of degree $3$,  either $d(v_5)=3$ and $d(v_6)=4$ or $d(v_5)=4$ and $d(v_6)=3$.
     \begin{itemize}
         \item \textbf{Subcase 2.1:} $d(v_5)=4$ and $d(v_6)=3$.
\\
We will show that $d(v_1,v_6)=3$. Suppose, to the contrary, that $d(v_1,v_4)\leq 2$. Greedily color all the vertices in $G^2$ using $10$ colors except the vertices incident to $f$, $f'$, and $f''$.
So, we have $|L(v)|\ge 8$, $|L(v_1)|\ge 6$,  $|L(v_2)|\ge 4$, $|L(v_3)|\ge 5$, $|L(v_4)|\ge 3$, $|L(v_5)|\ge 2$, $|L(v_6)|\ge 5$, $|L(v_7)|\ge 7$, $|L(v_8)|\ge 3$, and $|L(v_9)|\ge 3$. By \Cref{l8}, we deduce that $v_9\notin N(v_6)$. By Lemma \ref{l9}, we deduce that $v_2\notin  N(v_5)$. By \Cref{l8} and since $G$ is planar with $g(G)\ge 4$, we deduce that $d(v_1,v_4)=3$. Thus, since $|L(v_1)|+|L(v_4)|>8$, we can color $v_1$ by $c$ and $v_4$ by $c'$ such that $|L(v)\backslash \{c,c'\}|\ge 7$. Color $v_1$ by $c$ and $v_4$ by $c'$ and call the new list of available colors $L_1$. So, we have $|L_1(v)|\ge 7$, $|L_1(v_2)|\ge 2$,  $|L_1(v_3)|\ge 3$, $|L_1(v_5)|\ge 1$, $|L_1(v_6)|\ge 3$, $|L_1(v_7)|\ge 6$, $|L_1(v_8)|\ge 2$, and $|L_1(v_9)|\ge 2$. Greedily color in order $v_5$, $v_2$, $v_9$, $v_8$, $v_3$, $v_6$, $v_7$, and $v$ in $G^2$ to obtain $\chi_2(G)\leq 10$, a contradiction. Hence, $d(v_1,v_6)\ge 3$.

     Greedily color all the vertices in $G^2$ except the vertices incident to $f$, $f'$, and $f''$. So, we have $|L(v)|\ge 8$, $|L(v_1)|\ge 5$,  $|L(v_2)|\ge 4$, $|L(v_3)|\ge 5$, $|L(v_4)|\ge 3$, $|L(v_5)|\ge 2$, $|L(v_6)|\ge 4$, $|L(v_7)|\ge 7$, $|L(v_8)|\ge 3$, and $|L(v_9)|\ge 3$. Since $d(v_1,v_6)=3$ and  $|L(v_3)|+|L(v_8)|>8$, we can color $v_1$ by $c$ and $v_6$ by $c'$ such that $|L(v)\backslash \{c,c'\}|\ge 7$. Color $v_1$ by $c$ and $v_6$ by $c'$ and call the new list of available colors $L_1$.

     So, we have $|L_1(v)|\ge 7$, $|L_1(v_2)|\ge 3$,  $|L_1(v_3)|\ge 4$, $|L_1(v_4)|\ge 2$, $|L_1(v_5)|\ge 1$, $|L_1(v_7)|\ge 5$, $|L_1(v_8)|\ge 1$, and $|L_1(v_9)|\ge 2$. Greedily color in order $v_5$, $v_8$, $v_9$, $v_4$, $v_2$, $v_3$, $v_7$, and $v$ in $G^2$ to obtain $\chi_2(G)\leq 10$, a contradiction.
     \item \textbf{Subcase 2.2:} $d(v_5)=3$ and $d(v_6)=4$.
\\
 We will show that $d(v_1,v_4)=3$. Suppose, to the contrary, that $d(v_1,v_4)\leq 2$. If $d(v_1,v_4)=1$, then $v$ is contained in two $4$-cycles and has a $3$-neighbor $v_7$, a contradiction by Lemma \ref{l10}. Thus, $d(v_1,v_4)=2$. Let $x\in N(v_1)\cap N(v_4)$. Since $g(G)\ge 4$, $x \notin \{v_2,v_3\}$. We will show that $x\neq v_5$. Suppose that $x=v_5$. 
 Greedily color all the vertices in $G^2$ using $10$ colors except the vertices incident to $f$, $f'$, and $f''$. So, we have $|L(v)|\ge 9$, $|L(v_1)|\ge 8$,  $|L(v_2)|\ge 5$, $|L(v_3)|\ge 5$, $|L(v_4)|\ge 5$, $|L(v_5)|\ge 7$, $|L(v_6)|\ge 4$, $|L(v_7)|\ge 6$, $|L(v_8)|\ge 3$, and $|L(v_9)|\ge 4$.  Since $|L(v_7)|>5$ and $|L(v_1)|>6$, we can color $v_7$ by $c$ and $v_1$ by $c'$ such that $|L(v_3)\backslash \{c,c'\}|\ge 5$.  Color $v_7$ by $c$ and $v_1$ by $c'$ and call the new list of available colors $L_1$. So, we have $|L_1(v)|\ge 7$, $|L_1(v_2)|\ge 4$,  $|L_1(v_3)|\ge 5$, $|L_1(v_4)|\ge 4$, $|L_1(v_5)|\ge 5$, $|L_1(v_6)|\ge 2$, $|L_1(v_8)|\ge 1$, and $|L_1(v_9)|\ge 2$. Greedily color in order $v_8$, $v_9$, $v_6$, $v_4$, $v_5$, $v_2$, $v$, and $v_3$ in $G^2$ to obtain $\chi_2(G)\leq 10$, a contradiction. Thus, $x\neq v_5$. Therefore, $v_4$ has two neighbors of degree $3$. Moreover, since $G$ is planar, we deduce that $d(v_2,v_7)=3$.
 Greedily color all the vertices in $G^2$ using $10$ colors except the vertices incident to $f$, $f'$, and $f''$. So, we have $|L(v)|\ge 8$, $|L(v_1)|\ge 6$,  $|L(v_2)|\ge 4$, $|L(v_3)|\ge 5$, $|L(v_4)|\ge 6$, $|L(v_5)|\ge 3$, $|L(v_6)|\ge 3$, $|L(v_7)|\ge 6$, $|L(v_8)|\ge 3$, and $|L(v_9)|\ge 3$.
 Since $d(v_2,v_7)=3$ and $|L(v_2)|+|L(v_7)|>8$, we can color $v_2$ by $c$ and $v_7$ by $c'$ such that $|L(v)\backslash \{c,c'\}|\ge 7$. Color $v_2$ by $c$ and $v_7$ by $c'$ and call the new list of available colors $L_1$. 
 So, we have $|L_1(v)|\ge 7$, $|L_1(v_1)|\ge 4$,  $|L_1(v_3)|\ge 3$, $|L_1(v_4)|\ge 5$, $|L_1(v_5)|\ge 2$, $|L_1(v_6)|\ge 2$, $|L_1(v_8)|\ge 2$, and $|L_1(v_9)|\ge 1$. Greedily color in order $v_9$, $v_8$, $v_6$, $v_5$, $v_1$, $v_3$, $v_4$, and $v$ in $G^2$ to obtain $\chi_2(G)\leq 10$, a contradiction. Therefore, $d(v_1,v_4)=3$.

     Now, greedily color all the vertices in $G^2$ using $10$ colors except the vertices incident to $f$, $f'$, and $f''$. So, we have $|L(v)|\ge 8$, $|L(v_1)|\ge 5$,  $|L(v_2)|\ge 4$, $|L(v_3)|\ge 5$, $|L(v_4)|\ge 4$, $|L(v_5)|\ge 3$, $|L(v_6)|\ge 3$, $|L(v_7)|\ge 6$, $|L(v_8)|\ge 3$, and $|L(v_9)|\ge 3$.  
     
     We will show that $L(v_2)\subset L(v)$. Suppose, to the contrary, that $L(v_2)\nsubseteq L(v)$. Color $v_2$ by $i\in L(v_2)\backslash L(v)$ and call the new list of available colors $L_1$. So, we have $|L_1(v)|\ge 8$, $|L_1(v_1)|\ge 4$,  $|L_1(v_3)|\ge 4$, $|L_1(v_4)|\ge 3$, $|L_1(v_5)|\ge 3$, $|L_1(v_6)|\ge 3$ $|L_1(v_7)|\ge 6$, $|L_1(v_8)|\ge 3$, and $|L_1(v_9)|\ge 2$. 
     Since $d(v_1,v_4)=3$ and $|L_1(v_1)|+|L_1(v_4)|>4$, we can color $v_1$ by $c_1$ and $v_4$ by $c_2$ such that $|L_1(v_3)\backslash \{c_1,c_2\}|\ge 3$. Color $v_1$ by $c_1$ and $v_4$ by $c_2$ and call the new list of available colors $L_2$. So, we have $|L_2(v)|\ge 6$, $|L_2(v_3)|\ge 3$, $|L_2(v_5)|\ge 2$, $|L_2(v_6)|\ge 2$ $|L_2(v_7)|\ge 5$, $|L_1(v_8)|\ge 2$, and $|L_2(v_9)|\ge 1$. 
     Greedily color in order $v_9$, $v_8$, $v_6$, $v_5$, $v_7$, $v_3$, and $v$ in $G^2$ to obtain $\chi_2(G)\leq 10$, a contradiction. Thus, $L(v_2)\subset L(v)$. 

     Since $d(v_1,v_4)=3$ and $|L(v_1)|+|L(v_4)|>8$, we can color $v_1$ by $c$ and $v_4$ by $c'$ such that $|L(v)\backslash \{c,c'\}|\ge 7$.  Since $L(v_2)\subset L(v)$, we deduce that $|L(v_2)\backslash \{c,c'\}|\ge 3$. Color $v_1$ by $c$ and $v_6$ by $c'$ and call the new list of available colors $L_1$.

     So, we have $|L_1(v)|\ge 7$, $|L_1(v_2)|\ge 3$,  $|L_1(v_3)|\ge 3$, $|L_1(v_5)|\ge 2$, $|L_1(v_6)|\ge 2$, $|L_1(v_7)|\ge 5$, $|L_1(v_8)|\ge 2$, and $|L_1(v_9)|\ge 2$. Greedily color in order $v_8$, $v_9$, $v_6$, $v_5$, $v_7$, $v_3$, $v_2$, and $v$ in $G^2$ to obtain $\chi_2(G)\leq 10$, a contradiction. \qedhere  \end{itemize}
    \end{itemize}
\end{proof}
\begin{lemma} \label{l25}
    Let $f=vv_1v_2v_3v_4v_5$ be a $6$-face such that $d(v)=2$ and $d(v_2)=d(v_3)=d(v_4)=3$. Then, $v_2$, $v_3$, and $v_4$ are good $3$-vertices.
\end{lemma}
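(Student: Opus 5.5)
The plan is to first pin down the degrees on $f$ and then rule out, in turn, each way $v_2$, $v_3$ or $v_4$ could fail to be good. Since $d(v)=2$, Lemma~\ref{l2} gives $d(v_1)=d(v_5)=4$. By Lemma~\ref{l7}, $v_1$ and $v_5$ lie on no $4$-cycle. By Lemma~\ref{l4}, no other $2$-vertex lies within distance $3$ of $v$, so every vertex near $f$ other than $v$ has degree $3$ or $4$.

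A $3$-vertex $u\in\{v_2,v_3,v_4\}$ is good exactly when it satisfies three conditions:
\begin{itemize}
\item[(a)] it is not a $3(1)$-vertex;
\item[(b)] it is not incident to a bad $5$-face;
\item[(c)] it is not incident to a semi-bad $5$-face.
\end{itemize}
I treat these in that order, using the symmetry $v_2\leftrightarrow v_4$ throughout.

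\textbf{The uniform coloring step.} In every case I would use the same scheme. Uncolor $v$, the vertices of $f$, and the vertices of the offending small face (plus at most one or two extra vertices produced by a distance-$2$ coincidence). Color the rest of $G^2$ greedily, using Remark~\ref{r1}. The key point is that $v$ has at most $2+3+3=8$ neighbors in $G^2$, and most of them ($v_1,v_2,v_4,v_5$ and some face vertices) are uncolored. So $|L(v)|$ is large, and $v$ can always be colored last. This turns each case into a small list-coloring problem on the remaining uncolored vertices. I would solve it with the ``saving a color'' trick: find a pair at distance $3$, which planarity with $g(G)\ge 4$ (Lemma~\ref{l5}) supplies across the face, and whose list sizes sum to more than the list of a common $G^2$-neighbor. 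Then greedily color in order of increasing list size, exactly as in Lemmas~\ref{l14} and~\ref{l17}.

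\textbf{Step (a): no $4$-face.} Suppose $v_2$ lies on a $4$-face. That face uses two of the three edges at $v_2$, and it cannot use $v_2v_1$ because $v_1$ is on no $4$-cycle. Hence it is $v_2v_3zw$ with $w\in N(v_2)\setminus\{v_1,v_3\}$. By Lemma~\ref{l8}, $d(w)=d(z)=4$. The case of $v_3$ lying on a $4$-face reduces to this one or its mirror image, because a $4$-face at $v_3$ must contain $v_2$ or $v_4$. I then apply the uniform coloring step to $V(f)\cup\{w,z\}$. The face structure is symmetric enough that one saving pair, e.g. $(v_1,z)$ or $(w,v_4)$ at distance $3$ with a common neighbor $v_2$ or $v_3$, should suffice.

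\textbf{Steps (b) and (c): no bad or semi-bad $5$-face.} A bad $5$-face has all five vertices of degree $3$, so it avoids $v_1$ and $v_5$. At $v_2$ it therefore uses edges $v_2v_3$ and $v_2w$. At $v_3$ it must continue along $v_3v_4$ or along the third neighbor of $v_3$. In the first situation, $v_3$ is a $3$-vertex whose two $3$-neighbors are $v_2,v_4$, and Lemma~\ref{l11} already forces the remaining neighbors to have degree $4$, a contradiction. In the second situation, I color as above on $V(f)$ plus the two new face vertices.

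For a semi-bad face, the single $4$-vertex could be $v_1$ or $v_5$, or it could lie off $f$. When it lies off $f$, the argument is as for a bad face, except that Lemma~\ref{l16} now also controls the third neighbors. When it is $v_1$, the semi-bad face is $v_1v_2xyz$ with $x,y,z$ of degree $3$, and it shares the edge $v_1v_2$ with $f$.

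\textbf{Main obstacle.} I expect the semi-bad face through $v_1$ to be the hardest case. Here $v_1$ carries a $2$-neighbor, a $5$-face of $3$-vertices and the $6$-face $f$, so the uncolored set has about nine vertices. The saving step needs two disjoint distance-$3$ pairs, e.g. $(v_3,z)$ and $(v,x)$. To get them, I would first use Lemmas~\ref{l9}--\ref{l11} to exclude the coincidences $d(v_3,z)\le 2$ and $d(v_4,x)\le 2$. I would then save at $v_1$ first, since its list is tightest, before finishing greedily with $v$ last.
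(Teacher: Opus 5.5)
Your step (a) is fine but stops one line short. Once you have $d(w)=d(z)=4$ with $wz\in E(G)$, you already contradict $4$-irregularity, so no coloring is needed. The same holds at $v_3$, since any $4$-face there contains $v_2v_3$ or $v_3v_4$, an edge joining two $3$-vertices.

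The genuine gap is in steps (b) and (c), which are never carried out. ``I color as above on $V(f)$ plus the two new face vertices'' and ``one saving pair \dots should suffice'' are placeholders: no list sizes are computed and no distance-$3$ pair is verified. The case you call the main obstacle is left as an unchecked plan, with no indication of how Lemmas~\ref{l9}--\ref{l11} would exclude the coincidences you need. Your ``first situation'' is also mishandled. Lemma~\ref{l11} only forces the neighbor of $v_3$ off $f$ to have degree $4$, which contradicts nothing about a face $v_2v_3v_4yw$. That situation is impossible for a different reason: $G$ is $2$-connected (Lemma~\ref{l1}) and $d(v_3)=3$, so $f$ is the only face containing both $v_2v_3$ and $v_3v_4$. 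The other face through $v_2v_3$ turns at $v_3$ onto $v_3u_3$, where $u_3$ is the third neighbor of $v_3$.

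The missing idea is to use the $2$-vertex $v$ in a much smaller reducible configuration, showing that the neighbors $w$, $u_3$, $u_4$ of $v_2$, $v_3$, $v_4$ off $f$ are all $4$-vertices.
\begin{itemize}
\item If $d(w)=3$, uncolor only $v,v_1,v_2$. The two other neighbors of $v_1$ are $3$-vertices by $4$-irregularity and Lemma~\ref{l4}, so $d_{G^2}(v_1)\le 11$ and $|L(v_1)|\ge 1$. Also $|L(v_2)|\ge 2$ and $|L(v)|\ge 4$, so color $v_1$, $v_2$, $v$ in this order. The case of $u_4$ is symmetric.
\item If $d(u_3)=3$, uncolor $v,v_1,v_2,v_3$. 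Then $|L(v_1)|\ge 2$, $|L(v_2)|\ge 2$, $|L(v_3)|\ge 3$ and $|L(v)|\ge 4$, so color $v_1,v_2,v_3,v$ in this order.
\end{itemize}
Once $w,u_3,u_4$ are $4$-vertices, every face at $v_2$, $v_3$ or $v_4$ other than $f$ contains two distinct $4$-vertices. At $v_2$, for example, these are the face through $v_1,w$ and the face through $w,u_3$, with $w\neq u_3$ since $g(G)\ge 4$. Hence none of these faces is a bad or semi-bad $5$-face. In particular your ``main obstacle'' is vacuous, since there $x=w$ is a $4$-vertex. This is exactly the paper's proof, and it needs no saving-a-color step at all.
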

\begin{proof}
   By Lemma \ref{l7} and \ref{l8}, we deduce that $v_2$, $v_3$, and $v_4$ are not $3(1)$-vertices.
   We will show that the third neighbors of $v_2$, $v_3$, and $v_4$ are of degree $4$.

   Suppose that the third neighbor of $v_2$ is of degree $3$. Greedily color all the vertices in $G^2$ using $10$ colors except $v$, $v_1$ and $v_2$. Then, we have $|L(v)|\ge 4$, $|L(v_1)|\ge 1$, and $|L(v_2)|\ge 2$. Greedily color in order $v_1$, $v_2$, and $v$ in $G^2$ to obtain $\chi_2(G)\leq 10$, a contradiction. Thus, the third neighbor of $v_2$ is of degree $4$. Similarly, the third neighbor of $v_4$ is of degree $4$.

   Now, we will show that the third neighbor of $v_3$ is of degree $4$. Suppose, to the contrary, that the third neighbor of $v_3$ is of degree $3$. Greedily color all the vertices in $G^2$ using $10$ colors except $v$, $v_1$, $v_2$, $v_3$. Then, we have $|L(v)|\ge 4$, $|L(v_1)|\ge 2$, $|L(v_2)|\ge 2$, $|L(v_3)|\ge 3$. Greedily color in order $v_1$, $v_2$, $v_3$, and $v$ in $G^2$ to obtain $\chi_2(G)\leq 10$, a contradiction. Thus, the third neighbor of $v_3$ is of degree $4$.

   Since the third neighbors of $v_2$, $v_3$, and $v_4$ are of degree $4$, we deduce that they are good $3$-vertices.
\end{proof}

\begin{figure}[htbp]
    \centering
    \begin{tikzpicture}[
        scale=1,
        vertex/.style={circle, fill=black, inner sep=2pt},
        line/.style={thick}
    ]
        \node[vertex, label=below left:$v_6$]  (v6) at (0, 0) {};
        \node[vertex, label=below:$v_7$]       (v7) at (1, 0) {};
        \node[vertex, label=below:$x$]         (x)  at (2, 0) {};
        \node[vertex, label=below right:$y$]   (y)  at (4, 0) {};

        \node[vertex, label=left:$v_5$]        (v5) at (0, 2) {};
        \node[vertex, label=above right:$v_4$] (v4) at (2, 2) {};
        \node[vertex, label=right:$v_3$]       (v3) at (4, 2) {};

        \node[vertex, label=above left:$v$]    (v)  at (0, 4) {};
        \node[vertex, label=above:$v_1$]       (v1) at (2, 4) {};
        \node[vertex, label=above right:$v_2$] (v2) at (4, 4) {};

        \draw[line] (v) -- (v1) -- (v2);
        \draw[line] (v5) -- (v4) -- (v3);
        \draw[line] (v6) -- (v7) -- (x) -- (y);

        \draw[line] (v) -- (v5) -- (v6);
        \draw[line] (v4) -- (x);
        \draw[line] (v2) -- (v3) -- (y);
    \end{tikzpicture}
    \caption{Illustration of \Cref{l26}}
    \label{f4}
\end{figure}

\begin{lemma} \label{l26}
    Let $f=vv_1v_2v_3v_4v_5$ be a $6$-face such that $d(v)=2$ and $d(v_2)=d(v_4)=3$. Suppose that $v_2$ and $v_4$ are $3(1)$-vertices. Then, $v_2$ and are $v_4$ are strong $3(1)$-vertices.
\end{lemma}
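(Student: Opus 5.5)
We argue by contradiction, exactly in the style of \Cref{l22}: assume that one of $v_2,v_4$ is weak, say $v_4$ by symmetry. We then fix the local structure, uncolor a small configuration around the $5$-face, and extend the coloring using the saving trick.

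\textbf{Fixing the structure.} By \Cref{l2}, $d(v_1)=d(v_5)=4$. By \Cref{l7}, neither $v_1$ nor $v_5$ lies on a $4$-cycle. So the $4$-face at $v_4$ uses the edge $v_4v_3$ and the third neighbor $x$ of $v_4$; write it as $v_4v_3yx$. By \Cref{l8}, the degrees alternate on this face. Since $d(v_4)=3$, this gives $d(v_3)=d(x)=4$ and $d(y)=3$. Similarly, the $4$-face at $v_2$ uses the edge $v_2v_3$ on the other side of $v_3$. This is consistent with $d(v_3)=4$, and \Cref{l9} does not apply since $v_3$ is a $4$-vertex.

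Next, suppose $v_4$ is weak. Its faces are $f$, the $4$-face, and the face through $v_5v_4x$, so this last face must be a $5$-face $v_5v_4xv_7v_6$ (see \Cref{f4}). By \Cref{l13}, this is consistent: $v$ is incident to $f$, which is a $6$-face. Since $G$ is $4$-irregular and $v_5,x$ are $4$-vertices, we get $d(v_6)=d(v_7)=3$. Using planarity, $g(G)\ge 4$, \Cref{l7}--\Cref{l10} and \Cref{l4}, I would verify that these vertices are pairwise distinct. I would also check which pairs are at distance exactly $3$, in particular $d(v_6,v_3)$, $d(v_7,v_3)$ and $d(y,v_6)$; where a pair could be at distance $2$, I would split into cases as in \Cref{l12}.

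\textbf{Coloring.} By \Cref{r1}, color all of $G^2$ except $S=\{v,v_5,v_4,v_3,x,y,v_6,v_7\}$, possibly adding $v_1$ if the counts require it. I would then compute the lower bounds on $|L(\cdot)|$ by counting uncolored $G^2$-neighbors. The vertex $v$ has only at most $6$ neighbors in $G^2$, so its list is large, around $7$. Similarly $v_4$ and $v_5$ get lists of size about $6$ to $7$. The scarce vertices are $v_6$, $v_7$ and $y$, which will likely have lists of size about $3$ to $4$. The key step is to pick a pair at distance $3$ whose lists sum to more than the list of a common $G^2$-neighbor with a large list. For example, take $y$ and $v_6$ with common neighbor $v_4$ or $v_5$, and save a color there. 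Then, if needed, save a second color with another such pair, e.g. $v_7$ and $v_3$ relative to $x$. If a direct saving is not available, I would run the subset argument of \Cref{l21}: show $L(w)\subset L(v_4)$ for the scarce vertices $w$, otherwise color $w$ outside $L(v_4)$ for free. Finally, color greedily in the order of increasing list size, scarce vertices first, then $v_3,x,v_5,v_4$, and $v$ last.

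\textbf{Main obstacle.} The main obstacle is the list arithmetic around $v_6$, $v_7$ and $v_3$. These vertices have many uncolored $G^2$-neighbors in $S$, and it is unclear a priori whether one saving suffices or whether a nested case split on $d(v_3,v_6)$ and $d(v_3,v_7)$ is needed. I would try first the saving at $v_5$ using the pair $(v_6,v_3)$, since $v_5$ is adjacent to both $v_6$ and $v_4$. If $d(v_3,v_6)\le 2$, I would exploit the extra common neighbor it forces: it is a $3$-vertex by $4$-irregularity, so it can be added to $S$, which enlarges every list in the same way as in Case~1 of \Cref{l12}.
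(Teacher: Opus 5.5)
Your structural setup is correct and matches the paper's. You have $d(v_1)=d(v_5)=4$; the $4$-face at $v_4$ is $v_4v_3yx$ with $d(v_3)=d(x)=4$ and $d(y)=3$; and weakness forces the $5$-face $v_4v_5v_6v_7x$ with $d(v_6)=d(v_7)=3$ (degree $2$ is excluded by \Cref{l4}).

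\textbf{The gap.} The extension of the coloring, which is the whole content of the lemma, is never carried out. You list savings you might try and say yourself that you do not know whether one suffices. With your set $S=\{v,v_3,v_4,v_5,x,y,v_6,v_7\}$ this is not a formality.
Since $v_1$ and $v_5$ are $4$-vertices, $v$ can have $8$ neighbours in $G^2$, not $6$. So you only get $|L(v)|\ge 5$, not about $7$. The other bounds are $|L(v_5)|\ge5$, $|L(v_4)|\ge6$, $|L(x)|\ge5$, $|L(v_3)|,|L(v_6)|,|L(v_7)|\ge4$ and $|L(y)|\ge3$.
With these bounds no greedy order exists. Only $v$ has a list larger than its number of $G^2$-neighbours in $S$, and once $v$ is removed no vertex has this property. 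So a saving is compulsory, and you must exhibit a valid one together with its distance-$2$ alternatives.
Your first pair $(y,v_6)$ cannot save at $v_5$, because $y$ is not a $G^2$-neighbour of $v_5$.

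Your route can in fact be completed, but the proof consists precisely of these checks:
\begin{itemize}
\item If $d(v_3,v_6)=3$, then $|L(v_3)|+|L(v_6)|\ge 8>5$ gives a saving at $v_5$. After that, $y,v_7,x,v_4,v_5,v$ can be colored greedily.
\item $d(v_3,v_6)=1$ is impossible by \Cref{l7}.
\item If $d(v_3,v_6)=2$, the common neighbour must be the $3$-vertex $z$ of the $4$-face $v_2v_3zw$ at $v_2$. The other candidates are excluded by \Cref{l8} and \Cref{l10}. Adding $z$ to $S$ raises the lists enough that plain greedy coloring works in the order $y,v_7,x,z,v_3,v_5,v_4,v_6,v$.
\end{itemize}

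\textbf{What the paper does.} It avoids this case analysis by uncoloring all ten vertices of $f$, $f'$, $f''$, that is, $v_1$ and $v_2$ as well. These two are cheap: their uncolored $G^2$-neighbours lie in $\{v,v_1,\dots,v_5,y\}$, and their lists have size at least $3$. In exchange, the bounds for $v,v_4,v_5,y$ rise from $5,6,5,3$ to $7,7,6,4$, while $|L(x)|\ge5$ and $|L(v_6)|,|L(v_7)|\ge4$.
The paper then colors $x$ with a color $c$ such that $|L(v_6)\setminus\{c\}|\ge4$, which is possible because $|L(x)|>4$. It finishes greedily in the order $v_7,y,v_2,v_3,v_1,v_4,v_5,v_6,v$.
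That final count for $v_6$ tacitly assumes $y$ and $v_6$ are not at distance $2$. So your instinct to examine $d(y,v_6)$ was sound.
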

\begin{proof}
    We will show that $v_4$ is a strong $3(1)$-vertex. Similarly, we can show that $v_2$ is a strong $3(1)$-vertex.
    Suppose that $v_4$ is a weak $3(1)$-vertex. By Lemma \ref{l7}, we deduce that the $4$-face $f'$ incident to $v_4$ is of the form $v_3v_4xy$. Let $f''=v_4v_5v_6v_7x$ be the $5$-face incident to $v_4$ (See \Cref{f4}). So, we have $d(v_3)=d(x)=4$ and $d(y)=d(v_6)=d(v_7)=3$.
 Greedily color all the vertices in $G^2$ using $10$ colors except the vertices incident to $f$, $f'$, or $f''$. Then, we have $|L(v)|\ge 7$, $|L(v_1)|\ge 3$,  $|L(v_2)|\ge 3$, $|L(v_3)|\ge 5$, $|L(v_4)|\ge 7$, $|L(v_5)|\ge 6$, $|L(v_6)|\ge 4 $, $|L(v_7)|\ge 4$, $|L(x)|\ge 5$, and $|L(y)|\ge 4$. Since $|L(x)|>4$, we can color $x$ by $c$ such that $|L(v_6)\backslash \{c\}|\ge 4$. Color $x$ by $c$ and call the new list of available colors $L_1$.

    Then, we have $|L_1(v)|\ge 7$, $|L_1(v_1)|\ge 3$,  $|L_1(v_2)|\ge 3$, $|L_1(v_3)|\ge 4$, $|L_1(v_4)|\ge 6$, $|L_1(v_5)|\ge 5$, $|L_1(v_6)|\ge 4 $, $|L_1(v_7)|\ge 3$, and $|L_1(y)|\ge 3$. Greedily color in order $v_7$, $y$, $v_2$, $v_3$, $v_1$, $v_4$, $v_5$, $v_6$, and $v$ in $G^2$ to obtain $\chi_2(G)\leq 10$, a contradiction.
    \end{proof}
\begin{figure}[htbp]
    \centering
    \begin{subfigure}[b]{0.48\textwidth}
        \centering
        \begin{tikzpicture}[
            scale=1.0,
            vertex/.style={circle, fill=black, inner sep=2pt},
            line/.style={thick}
        ]
            \node[vertex, label=below left:$v_6$]  (v6) at (0, 0) {};
            \node[vertex, label=below:$v_5$]       (v5) at (2, 0) {};
            \node[vertex, label=below right:$v_4$] (v4) at (4, 0) {};

            \node[vertex, label=below right:$v_1$] (v1) at (0, 2) {};
            \node[vertex, label=below right:$v_2$] (v2) at (2, 2) {};
            \node[vertex, label=below right:$v_3$] (v3) at (4, 2) {};

            \node[vertex, label=above left:$v_9$]  (v9) at (0, 3.5) {};
            \node[vertex, label=above:$v_8$]       (v8) at (1, 3.5) {};
            \node[vertex, label=above right:$v_7$] (v7) at (2, 3.5) {};

            \draw[line] (v6) -- (v5) -- (v4);
            \draw[line] (v1) -- (v2) -- (v3);
            \draw[line] (v9) -- (v8) -- (v7);

            \draw[line] (v9) -- (v1) -- (v6);
            \draw[line] (v7) -- (v2);
            \draw[line] (v3) -- (v4);
        \end{tikzpicture}
        
        \label{fig:graph_3238}
    \end{subfigure}
    \hfill
    \begin{subfigure}[b]{0.48\textwidth}
        \centering
        \begin{tikzpicture}[
            scale=1.0,
            vertex/.style={circle, fill=black, inner sep=2pt},
            line/.style={thick}
        ]
            \node[vertex, label=below left:$v_2$]  (v2) at (0, 0) {};
            \node[vertex, label=below:$v_3$]       (v3) at (2, 0) {};
            \node[vertex, label=below right:$v_4$] (v4) at (4, 0) {};

            \node[vertex, label=below right:$v_1$] (v1) at (0, 2) {};
            \node[vertex, label=below right:$v_6$] (v6) at (2, 2) {};
            \node[vertex, label=below right:$v_5$] (v5) at (4, 2) {};

            \node[vertex, label=above left:$v_9$]  (v9) at (0, 3.5) {};
            \node[vertex, label=above:$v_8$]       (v8) at (1, 3.5) {};
            \node[vertex, label=above right:$v_7$] (v7) at (2, 3.5) {};

            \draw[line] (v2) -- (v3) -- (v4);
            \draw[line] (v1) -- (v6) -- (v5);
            \draw[line] (v9) -- (v8) -- (v7);

            \draw[line] (v9) -- (v1) -- (v2);
            \draw[line] (v7) -- (v6);
            \draw[line] (v5) -- (v4);
        \end{tikzpicture}
        
        \label{fig:graph_3239}
    \end{subfigure}
    \caption{Illustration of \Cref{l27}}
    \label{f5}
\end{figure}

    \begin{lemma}
        \label{l27}
        Let $f=v_1v_2v_3v_4v_5v_6$ be a $6$-face such that $f$ is incident to five $3$-vertices. Then, all incident $3$-vertices are good $3$-vertices.
    \end{lemma}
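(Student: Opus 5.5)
By \Cref{l2}, the vertex of $f$ that is not among the five $3$-vertices cannot be a $2$-vertex, since it is adjacent on $f$ to a $3$-vertex. Since $G$ is $4$-irregular, it is either a $4$-vertex or a sixth $3$-vertex. I would label $f=v_1v_2\cdots v_6$ with $d(v_1)=\dots=d(v_5)=3$ and treat $d(v_6)\in\{3,4\}$; the case $d(v_6)=3$ is handled by the same arguments with more room. A good $3$-vertex is one that is neither a $3(1)$-vertex nor incident to a bad or semi-bad $5$-face, so three things must be excluded.

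\textbf{Step 1: no $3(1)$-vertices.} If some $v_i$ ($i\le 5$) were a $3(1)$-vertex, then $f$ would be a $6$-face incident to a $3(1)$-vertex and to at least five $3$-vertices. This contradicts \Cref{l23}. Hence each of $v_1,\dots,v_5$ is incident only to $5^+$-faces.

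\textbf{Step 2: third neighbours of the inner vertices.} Each of $v_2,v_3,v_4$ has two $3$-neighbours on $f$. Applying \Cref{l11} to the adjacent pairs $v_2v_3$ and $v_3v_4$ shows that the third neighbours $w_2,w_3,w_4$ of $v_2,v_3,v_4$ are $4$-vertices. Every face at $v_i$ ($i=2,3,4$) other than $f$ contains the edge $v_iw_i$. So $v_2,v_3,v_4$ lie on no bad $5$-face, and any semi-bad $5$-face at $v_i$ has $w_i$ as its unique $4$-vertex.

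\textbf{Step 3: exclude semi-bad faces at the inner vertices.} Suppose a semi-bad $5$-face $f'$ shares an edge of $f$ with some $v_i$, $i\in\{2,3,4\}$. For example, $f'=v_iv_{i\pm1}ab w_i$ with $a,b$ being $3$-vertices; this is the configuration in the left part of \Cref{f5}. I would apply \Cref{l16}, \Cref{l18} and \Cref{l20} to $f'$, viewed as a semi-bad face whose $4$-vertex is $w_i$. \Cref{l16} forces the third neighbours of the two vertices of $f'$ adjacent to $w_i$ to be $4$-vertices. For $f'=v_iv_{i+1}abw_i$ these two vertices are $v_i$ and $b$, and the third neighbour of $v_i$ is $v_{i-1}$, a $3$-vertex. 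This is an immediate contradiction, and symmetrically for $v_{i-1}$. So Step 3 reduces to bookkeeping.

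\textbf{Step 4: the end vertices $v_1,v_5$.} These are adjacent to $v_6$, and this is where the real work lies (right part of \Cref{f5}). By symmetry, treat $v_1$. A bad or semi-bad $5$-face at $v_1$ other than $f$ contains either the edge $v_1v_2$ together with the third neighbour $x$ of $v_1$, or the edge $v_1v_6$ together with $x$.

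In the first subcase, $f'$ passes through $v_2$ and hence through $w_2$, so $f'=v_1v_2w_2yx$. Then \Cref{l16} applied with the $4$-vertex $w_2$ gives the contradiction, because $v_1$ has the $3$-neighbour $v_6$ or $x$.

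In the second subcase, $f'=v_1v_6abx$ with $v_6$ as the $4$-vertex, and all of $v_1,x,a,b$ are $3$-vertices. By \Cref{l16}, the third neighbour of $v_1$ that is not on $f'$, namely $v_2$, must be a $4$-vertex. This is a contradiction. The same argument covers $d(v_6)=3$, where $f'$ would be bad and \Cref{l17} can also be used.

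If some configuration slips through these lemmas, I would fall back on the standard reduction. Greedily colour everything except $V(f)\cup V(f')$, estimate the lists, save a colour at a high-list vertex by using a pair at distance $3$ supplied by planarity and $g(G)\ge 4$, and finish greedily. I expect this fallback, if needed in Step 4, to be the main obstacle, since the distance-$3$ pairs must be certified via \Cref{l8}, \Cref{l9} and \Cref{l10}.
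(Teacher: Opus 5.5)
Your proof is correct. After the same opening as the paper (\Cref{l23} excludes $3(1)$-vertices on $f$, and \Cref{l11} makes $w_2,w_3,w_4$ into $4$-vertices), it takes a different route.

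The paper only examines $v_1$, and symmetrically $v_5$. It splits according to whether the bad or semi-bad face at $v_1$ is $v_1v_2v_7v_8v_9$ with $v_7=w_2$, or $v_1v_6v_7v_8v_9$. Each case is then killed by a fresh list-colouring reduction on the vertices of $f\cup f'$. The claim for $v_2$ and $v_4$ is inferred afterwards, because their other faces either pass through $v_1$ or $v_5$ or contain two $4$-vertices.

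You instead observe that both configurations are already forbidden by \Cref{l16}:
\begin{itemize}
\item in the first, the $4$-vertex $w_2$ has face-neighbour $v_2$, whose off-face neighbour $v_3$ is a $3$-vertex;
\item in the second, the $4$-vertex $v_6$ has face-neighbour $v_1$, whose off-face neighbour $v_2$ is a $3$-vertex.
\end{itemize}
This makes the lemma a short corollary of \Cref{l11}, \Cref{l16} and \Cref{l23}. It handles $v_2,v_3,v_4$ directly and needs no new list-size bookkeeping. The paper's version is self-contained for this configuration, but it essentially repeats work already done in \Cref{l16}. You do not need \Cref{l18}, \Cref{l20}, or your colouring fallback.

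Two justifications need fixing, though neither affects correctness.
\begin{itemize}
\item In the first subcase of Step~4, the reason you give is wrong. On $f'=v_1v_2w_2yx$ the face-neighbours of $w_2$ are $v_2$ and $y$, not $v_1$, and $v_6$ is a $4$-vertex. The right witness is $v_2$, with off-face neighbour $v_3$. So this subcase is exactly your Step~3 with $i=2$, and nothing is lost.
\item If $d(v_6)=3$, the face $f'$ cannot be bad, because \Cref{l11} then makes the third neighbour $x$ of $v_1$ a $4$-vertex. That case follows from your Step~3 argument applied to every vertex of $f$, not from \Cref{l17}. It is not needed anyway if ``five'' means exactly five.
\end{itemize}
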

\begin{proof}
    Without loss of generality, suppose that $d(v_6)=4$. By \Cref{l23}, we deduce that $f$ is not incident to any $3(1)$-vertex. By \Cref{l11}, we deduce that the third neighbors of $v_2$, $v_3$, and $v_4$ are of degree $4$. Hence, $v_3$ is a good $3$-vertex. We will show that $v_1$ and $v_5$ are not incident to any bad or semi-bad $5$-face. Therefore, we deduce that $v_2$ and $v_4$ are not incident to any semi-bad or bad $5$-face.

    We will show that $v_1$ is not incident to any semi-bad or bad $5$-face. Similarly, we can  show that $v_5$ is not incident to any semi-bad or bad $5$-face.
Suppose, to the contrary, that $v_1$ is incident to a bad or semi-bad $5$-face $f'$ (See \Cref{f5}).
    \begin{itemize}
        \item \textbf{Case 1:} $f'=v_1v_2v_7v_8v_9$.
        \\
        Then, $d(v_7)=4$ since the third neighbor of $v_2$ is of degree $4$. So, $d(v_9)=d(v_8)=3$.
        Greedily color all the vertices in $G^2$ using $10$ colors except the vertices incident to $f$ and $f'$. So, we have $|L(v_1)|\ge 7$,  $|L(v_2)|\ge 7$, $|L(v_3)|\ge 5$, $|L(v_4)|\ge 4$, $|L(v_5)|\ge 3$, $|L(v_6)|\ge 3$, $|L(v_7)|\ge 3$, $|L(v_8)|\ge 3$, and $|L(v_9)|\ge 5$.  Since $|L(v_2)|>5$, we can color $v_2$ by $c$  such that $|L(v_3)\backslash \{c\}|\ge 5$.  Color $v_2$ by $c$ and call the new list of available colors $L_1$. 
        
        So, we have $|L_1(v_1)|\ge 6$, $|L_1(v_3)|\ge 5$, $|L_1(v_4)|\ge 3$, $|L_1(v_5)|\ge 3$, $|L_1(v_6)|\ge 2$, $|L_1(v_7)|\ge 2$, $|L_1(v_8)|\ge 2$, and $|L_1(v_9)|\ge 4$. Greedily color in order $v_7$, $v_8$, $v_6$, $v_9$, $v_5$, $v_1$, $v_4$, and $v_3$ in $G^2$ to obtain $\chi_2(G)\leq 10$, a contradiction.
        \item \textbf{Case 2:} $f'=v_1v_6v_7v_8v_9$. \\
        Then, $d(v_7)=d(v_8)=d(v_9)=3$. Greedily color all the vertices in $G^2$ using $10$ colors except the vertices incident to $f$ and $f'$. So, we have $|L(v_1)|\ge 7$,  $|L(v_2)|\ge 5$, $|L(v_3)|\ge 4$, $|L(v_4)|\ge 4$, $|L(v_5)|\ge 4$, $|L(v_6)|\ge 5$, $|L(v_7)|\ge 4$, $|L(v_8)|\ge 4$, and $|L(v_9)|\ge 5$. Since $|L(v_1)|>5$, we can color $v_1$ by $c$ such that $|L(v_2)\backslash \{c\}|\ge 5$.  Color $v_1$ by $c$ and call the new list of available colors $L_1$. 
        
        So, we have $|L_1(v_2)|\ge 5$, $|L_1(v_3)|\ge 3$, $|L_1(v_4)|\ge 4$, $|L_1(v_5)|\ge 3$, $|L_1(v_6)|\ge 4$, $|L_1(v_7)|\ge 3$, $|L_1(v_8)|\ge 3$, and $|L_1(v_9)|\ge 4$. Greedily color in order $v_7$, $v_8$, $v_6$, $v_9$, $v_5$, $v_4$, $v_3$, and $v_2$ in $G^2$ to obtain $\chi_2(G)\leq 10$, a contradiction.
    \end{itemize}
    Therefore, all incident $3$-vertices are good $3$-vertices.
\end{proof}
\begin{lemma}
    \label{l28}
    Let $f=v_1v_2v_3v_4v_5v_6v_7v_8$ be an $8$-face such that $f$ is incident to two vertices of degree $2$. Then, $f$ is incident to exactly two $3$-vertices and such vertices are good $3$-vertices.
    \begin{proof}
        By \Cref{l4}, the distance between the $2$-vertices is at least $4$. Without loss of generality, suppose that $d(v_1)=d(v_5)=2$. Then, $d(v_2)=d(v_4)=d(v_6)=d(v_8)=4$ by \Cref{l2}. Therefore, $d(v_3)=d(v_7)=3$ since $G$ is $4$-irregular. So, $f$ is incident to exactly two vertices of degree $3$. We will show that the third neighbor of $v_3$ is of degree $4$. Hence, since $v_3$ has three neighbors of degree $4$, we deduce that $v_3$ is a good $3$-vertex. Similarly, we can show that $v_7$ is a good $3$-vertex.

        Suppose, to the contrary, that $v_3$ has a $3$-neighbor. Greedily color all the vertices in $G^2$ using $10$ colors except $v_1$, $v_2$, $v_3$, $v_4$, and $v_5$. Then, we have $|L(v_1)|\ge 4$,  $|L(v_2)|\ge 2$, $|L(v_3)|\ge 3$, $|L(v_4)|\ge 2$, and $|L(v_5)|\ge 4$. Greedily color in order $v_2$, $v_4$, $v_3$, $v_1$, and $v_5$ in $G^2$ to obtain $\chi_2(G)\leq 10$, a contradiction. Thus, all neighbors of $v_3$ are of degree $4$. So, $v_3$ is a good $3$-vertex.
    \end{proof}
\end{lemma}
\subsection{Discharging}
Now, we will apply discharging method to prove that $G$ does not exist.\\
By Euler's formula, we have the following equality:
\\
$$\sum_{v \in V(G)}(d(v)-4)+ \sum_{f \in F(G) }(d(f)-4) = -8$$
We assign an initial charge $d(v)-4$ to every vertex $v$  and $d(f)-4$ to every face $f$, and
design appropriate discharging rules and redistribute charges among vertices and faces, such that the final charge of vertices
and faces are nonnegative, a contradiction. Hence, $G$ does not exist and \Cref{thm1} holds.
\\
Now, we design the following discharging rules:\\
\textbf{R1:} Every good $2$-vertex receives $1$ from each incident $6^+$-face.\\
\textbf{R2:} Every bad $2$-vertex receives $\frac{5}{3}$ from each incident $8^+$-face.\\
\textbf{R3:} Every good $5$-face sends $\frac{1}{3}$ to each  incident $3^-$-vertex.\\
\textbf{R4:} Every semi-bad $5$-face sends $\frac{1}{4}$ to each  incident 3-vertex. \\
\textbf{R5:} Every bad $5$-face sends $\frac{1}{5}$ to each  incident 3-vertex.  \\
\textbf{R6:} Every weak $3(1)$-vertex receives $\frac{2}{3}$ from each incident $6^+$-face.\\
\textbf{R7:} Every strong $3(1)$-vertex receives $\frac{1}{2}$ from each incident $6^+$-face.\\
\textbf{R8:} Every $6^+$-face sends $\frac{2}{5}$ to each incident bad $3$-vertex. \\
\textbf{R9:} Every $6^+$-face sends $\frac{5}{12}$ to each incident semi-bad $3$-vertex.\\
\textbf{R10:} Every $6^+$-face sends $\frac{1}{3}$ to each incident good $3$-vertex.\\

Note that weak $3(1)$-vertices are not adjacent by \Cref{l12}. Moreover, by \Cref{l8}, we deduce that a $3(1)$-vertex has at most one $3$-neighbor. Thus, each $k$-face is incident to at most $\lfloor \frac{2k}{3}\rfloor$ $3(1)$-vertices, for $k\ge 5$.
\\
\\

Denote by $\mu $ the final charge. \\ \\
\textbf{Final Charges:}
Let $f\in F(G)$ and $v\in V(G)$.
\begin{itemize}
    \item If $f$ is a $4$-face: $f$ does not send or receive any charge. So, $\mu(f)=0$.
    \item If $f$ is a $5$-face: Note that if $f$ is incident to $2$-vertex, we deduce by Lemma \ref{l2} that $f$ is a good $5$-face. Suppose that $f$ is a good face. Then, $f$ sends $\frac{1}{3}$ to three incident $3^-$-vertices. So, $\mu(f)= 0$. Suppose that $f$ is a semi-bad face. Then, $f$ sends $\frac{1}{4}$ to four incident $3$-vertices. So, $\mu(f)= 0$. Suppose that $f$ is a bad face. Then, $f$ sends $\frac{1}{5}$ to five incident $3$-vertices. So, $\mu(f)= 0$.
    \item If $f$ is a $6$-face: By Lemma \ref{l4}, we deduce that $f$ is incident to at most one $2$-vertex. Moreover, by Lemma \ref{l14}, we deduce that such a $2$-vertex is a good $2$-vertex. 
    
    Suppose that $f$ is incident to one $2$-vertex. Since that neighbors of a $2$-vertex are of degree $4$ by Lemma \ref{l2}, $f$ is incident to at most three $3$-vertices. Moreover, since $G$ is $4$-irregular, $f$ is incident to at least two $3$-vertices.  Suppose that $f$ is incident to three $3$-vertices. By Lemma \ref{l25}, these vertices are good $3$-vertices. So, $\mu(f)=0$ after $f$ sends $1$ to its incident $2$-vertex by R1 and $\frac{1}{3}$ to each incident $3$-vertex by R10.  Suppose that $f$ is incident to two $3$-vertices. By Lemma \ref{l26}, these vertices are not weak $3(1)$-vertices. So, $\mu (f)\ge 0$ after $f$ sends $1$ to its incident $2$-vertex by R1 and at most $\frac{1}{2}$ to each incident $3$-vertex by R7.

     Suppose that $f$ is not incident to any $2$-vertex. Suppose that $f$ is incident to at most three $3$-vertices. Then $\mu(f)\ge 0$ after $f$ sends at most $\frac{2}{3}$ to each incident $3$-vertex by R6. Suppose now that $f$ is incident to four $3$-vertices. By Lemma \ref{l24}, we deduce that $f$ is not incident to any weak $3(1)$-vertex. So, $v$ sends at most $\frac{1}{2}$ to each incident $3$-vertex by R7. Hence, $\mu(v)\ge d(f)-4-4 \times \frac{1}{2} \ge 0$.
     Suppose now that $f$ is incident to five $3$-vertices. By Lemma \ref{l27}, all incident $3$-vertices are good $3$-vertices. So, $f$ sends $\frac{1}{3}$ to each incident $3$-vertex by R10. Hence, we have $\mu(v) \ge 0$.
     Suppose now that $f$ is incident to six $3$-vertices. By Lemma \ref{l11}, we deduce that all incident vertices are good $3$-vertices. So, $f$ sends $\frac{1}{3}$ to each incident vertex by R10. Hence, we have $\mu(v) =0$.
     \item If $f$ is a $7$-face: By Lemma \ref{l4}, we deduce that $f$ is incident to at most one $2$-vertex. Such a vertex is a good $2$-vertex by \Cref{l15}. If $f$ is incident to a $2$-vertex, we deduce that $f$ is incident at most two $3(1)$-vertices by Lemma \ref{l7}. In such a case, we deduce by \Cref{l8} that $f$ is incident to three  $3$-vertices. 
     By Lemma \ref{l8}, we deduce that $f$ contains at most four  $3(1)$-vertices and in such a case the remaining incident vertices are of degree $4$. Note that if all incident vertices are of degree $3$, we deduce by \Cref{l11} that all incident vertices are good $3$-vertices.   Hence, the worst case is when $f$ is incident to a $2$-vertex, two weak $3(1)$-vertices, and one semi-bad $3$-vertex. So, $\mu(f)\ge 7-4-1 -2\times \frac{2}{3} -\frac{5}{12}\ge 0$ after $f$ sends $1$ to its incident $2$-vertex by R1, $\frac{2}{3}$ to each incident weak $3(1)$-vertex by R6, and $\frac{5}{12}$ to its incident semi-bad $3$-vertex by R9.
     \item If $f$ is an $8$-face:  By Lemma \ref{l4}, we deduce that $f$ is incident to at most two vertices of degree $2$.
     The worst case is when $f$ is incident to two bad $2$-vertices. Suppose that $f$ is incident to two bad $2$-vertices. Then, by \Cref{l28}, $f$ is incident to exactly two $3$-vertices that are good $3$-vertices. Thus, $\mu(f) \ge 8-4-2\times \frac{5}{3}-2\times \frac{1}{3}\ge 0$ after $f$ sends $\frac{5}{3}$ to each incident $2$-vertex by R1 and $\frac{1}{3}$ to each incident $3$-vertex by R10.
     \item If $f$ is a $9^+$-face: Let $k=d(f)$ such that $k \ge 9$. Let $r$ be the remainder of $\frac{k}{4}$. By Lemma \ref{l4}, we deduce that $f$ is incident to at most $\lfloor \frac{k}{4}\rfloor$ $2$-vertices. 
     In such a case, we deduce that $f$ is incident to $\lfloor\frac{k}{2}\rfloor$ $4$-vertices by \Cref{l2}. The worst case occurs when $f$ is incident to $\lfloor \frac{k}{4} \rfloor$ bad $2$-vertices.
     
     Suppose that $f$ is incident to $\lfloor \frac{k}{4} \rfloor$ bad $2$-vertices. Then, $f$ is incident to at most $(\lfloor \frac{k}{4}\rfloor+r)$ $3$-vertices. In this case, by Lemma \ref{l7} and \ref{l8}, we deduce that none of these $3(1)$-vertices. So, the worst case is when $f$ is incident to $(\lfloor \frac{k}{4}\rfloor+r)$ semi-bad $3$-vertices.
     Thus, $\mu(f) \ge k-4-\lfloor \frac{k}{4}\rfloor \times \frac{5}{3}-\frac{5}{12} \times (\lfloor \frac{k}{4}\rfloor +r)\ge 0$ after $f$ sends $\frac{5}{3}$ to each incident $2$-vertex by R2 and  $\frac{5}{12}$ to each semi-bad $3$-vertex by R9.
     \item If  $v$ is a $2$-vertex: Suppose that $v$ is a good $2$-vertex. Then, $v$ receives $1$ from each incident face by R1. Hence, $\mu(v)=0$. Suppose now that $v$ is a bad $2$-vertex. Then, $v$ is incident to an $8^+$-face by Lemma \ref{l14} and \ref{l15}. By Lemma \ref{l2}, we deduce that the $5$-face incident to $v$ is a good $5$-face. So, $v$ receives $\frac{1}{3}$ from its incident $5$-face by R3 and $\frac{5}{3}$ from its incident $8^+$-face by R2. So, $\mu(v)=0$.
     \item If $v$ is a $3$-vertex: 

     Suppose that $v$ is a $3(1)$-vertex. Suppose that $v$ is a strong $3(1)$-vertex. Then, $v$ receives $\frac{1}{2}$ from each incident $6^+$-face by R7. Hence, $\mu(v)=0$. Suppose now that $v$ is a weak $3(1)$-vertex. By Lemma \ref{l19}, $v$ is  incident to a good $5$-face. Then, $v$ receives $\frac{1}{3}$ from its incident $5$-face by R3 and $\frac{2}{3}$ from its incident $6^+$-face by R6. Hence, $\mu(v)=0$.

     Suppose now that $v$ is not incident to any $4$-face. Suppose that $v$ is a bad $3$-vertex. By Lemma \ref{l17}, $v$ is incident to two $6^+$-faces. Then, $v$ receives $\frac{1}{5}$ from its incident 5-face by R5 and $\frac{2}{5}$ from each incident $6^+$-face by R8. Hence, $\mu(v)=0$. Suppose that $v$ is a semi-bad $3$-vertex. By Lemma \ref{l18} and \ref{l20}, $v$ is incident to at least one $6^+$-face. By Lemma \ref{l11} and \ref{l16}, we deduce that $v$ is not incident to any other semi-bad or bad $5$-face. Then, $v$ receives $\frac{1}{4}$ from its incident semi-bad $5$-face by R4, $\frac{5}{12}$ from its incident $6^+$-face by R9, and at least $\frac{1}{3}$ from its third incident face. Hence, $\mu(v)\ge 0$. 
    Suppose that $v$ is a good $3$-vertex. Then, $v$ receives $\frac{1}{3}$ from each incident face. Hence, $\mu(v)\ge 0$. 
     
     \item If $v$ is a $4$-vertex: $v$ does not receive or send any charge. Hence, $\mu(v)=0$.
    
\end{itemize}

\vspace{0.5cm}
\textbf{Acknowledgment:} I would like to express my heartfelt appreciation to Dr. Maidoun Mortada for her continuous support and guidance in completing this work.

\end{document}